\documentclass[11pt]{article}

\usepackage[
            CJKbookmarks=true,
            bookmarksnumbered=true,
            bookmarksopen=true,
            colorlinks=true,
            citecolor=red,
            linkcolor=blue,
            anchorcolor=red,
            urlcolor=blue,
            pdfauthor={Kiryung Lee, Rakshith Sharma Srinivasa, Marius Junge, Justin Romberg},
            pdfstartview=FitH,
            ]{hyperref}

\usepackage{amsmath,amsxtra,amssymb,amsthm,amsfonts}
\usepackage{mathabx}
\usepackage[usenames]{color}
\usepackage{umoline}
\usepackage[all]{xy}
\usepackage{bm}
\usepackage{bbm}
\usepackage{tikz-cd}
\usepackage{epstopdf}
\epstopdfsetup{update} 
\usepackage{multirow}
\usepackage{graphicx}
\usepackage{geometry}
\usepackage[caption=false]{subfig}
\usepackage[numbers,sort&compress]{natbib}
\usepackage{algorithm}
\usepackage{algorithmic}

\vfuzz2pt 
\hfuzz2pt 
\newtheorem{lemma}{Lemma}[section]
\newtheorem{prop}[lemma]{Proposition}
\newtheorem{theorem}[lemma]{Theorem}
\newtheorem{cor}[lemma]{Corollary}

\newtheorem{rem}[lemma]{Remark}

\newcommand{\re}{\begin{rem}\rm}
  \newcommand{\mar}{\end{rem}}

\newtheorem{defi}[lemma]{Definition}

\newcommand{\xspace}{\hbox{\kern-2.5pt}}
\newcommand\tnorm[1]{\left\vert\xspace\left\vert\xspace\left\vert\mskip2mu
#1\mskip2mu \right\vert\xspace\right\vert\xspace\right\vert}


\newcommand{\R}{\mathbb{R}}
\newcommand{\norm}[1]{\left \|#1 \right \|}
\newcommand{\E}{\mathbb{E}}

\topmargin -1 cm

\begin{document}

\title{Approximately low-rank recovery from noisy and local measurements by convex program}

\author{Kiryung Lee, Rakshith Sharma Srinivasa, Marius Junge, and Justin Romberg\thanks{K. Lee is with the Department of Electrical and Computer Engineering at the Ohio State University, Columbus, OH 43210. R.S. Srinivasa and J. Romberg are with the School of Electrical and Computer Engineering at Georgia Institute of Technology, Atlanta, GA 30308. M. Junge is with the Department of Mathematics at University of Illinois at Urbana-Champaign, Urbana, IL 61801. Preliminary results were presented in part at the 13th International conference on Sampling Theory and Applications \cite{lee2019entropy} and at the 33rd Conference on Neural Information Processing Systems \cite{srinivasa2019decentralized}.}}

\maketitle

\begin{abstract}
Low-rank matrix models have been universally useful for numerous applications, from classical system identification to more modern matrix completion in signal processing and statistics. The nuclear norm has been employed as a convex surrogate of the low-rankness since it induces a low-rank solution to inverse problems. While the nuclear norm for low rankness has an excellent analogy with the $\ell_1$ norm for sparsity through the singular value decomposition, other matrix norms also induce low-rankness. Particularly as one interprets a matrix as a linear operator between Banach spaces, various tensor product norms generalize the role of the nuclear norm. We provide a tensor-norm-constrained estimator for the recovery of approximately low-rank matrices from local measurements corrupted with noise. A tensor-norm regularizer is designed to adapt to the local structure. We derive statistical analysis of the estimator over matrix completion and decentralized sketching by applying Maurey's empirical method to tensor products of Banach spaces. The estimator provides a near-optimal error bound in a minimax sense and admits a polynomial-time algorithm for these applications.
\end{abstract}

\section{Introduction}

We consider the estimation of an approximately low-rank matrix $M_0 \in \mathbb{R}^{d_1 \times d_2}$ from noisy and \emph{local} linear measurements 
\begin{equation}
	\label{eq:meas_mdl}
	y_k = \mathrm{tr}(A_k^\top M_0) + \eta_k, \quad k=1,\dots,n,
\end{equation}
where the $A_k$'s are fixed measurement matrices and the $\eta_k$ are additive noise.  The $A_k$'s are local in the sense that they have very few non-zero entries; each measurement $y_k$ depends only on a small part of the unknown $M_0$.  We are particularly interested in the cases where $A_k$'s have only a single non-zero column (which we refer to as column-wise sketching) and where $A_k$ has a single non-zero entry (as in the well-known matrix completion problem).

Given the $y_k$ in \eqref{eq:meas_mdl}, we will estimate $M_0$ by solving
\begin{equation}
	\label{eq:gen_mat_lasso}
	\begin{array}{ll}
		\displaystyle \mathop{\text{minimize}}_M & \|y - \mathcal{A}(M)\|_2^2 \\
		\text{subject to} & \tnorm{M} \leq \alpha,
	\end{array}
\end{equation}
where $\tnorm{\cdot}$ is a matrix norm and we have collected the measurements into $y = [y_1 ; \dots ; y_n]$ and the collective action of the measurement matrix $A_k$ into the operator $\mathcal{A}: \mathbb{R}^{d_1 \times d_2} \to \mathbb{R}^n$. We can interpret \eqref{eq:gen_mat_lasso} as a generalization of the classic LASSO estimator \citep{tibshirani1996regression} where we will choose the norm $\tnorm{\cdot}$ based on properties of the $A_k$.  We will show that careful choice of $\tnorm{\cdot}$ results in minimax near-optimal error bounds for both column-wise sketching and matrix completion. 

\subsection{Regularizer for local measurement operator}

We present a unifying framework to design the regularizer $\tnorm{\cdot}$ according to the structure in the measurement operator $\mathcal{A}$ for three different types of measurement models.

\vspace{.1in}
\noindent
\textbf{Column-wise sketching.}
In this acquisition model, the columns of $M_0$ are sensed independently.  Each $A_k$ is an outer product of a random Gaussian vector and a standard basis vector: $A_k = \xi_k e_{j_k}^\top$, where $j_k \in [d_2]$ and $e_j$ is the $j$th column of the identity matrix. This model arises in multivariate regression with applications in the analysis of medical imaging and financial data \cite{negahban2011estimation}.
In general, there will be multiple measurements for each column ($j_k$ will be the same for multiple $k$s). Therefore, the net effect is applying a different random matrix to each column of $M_0$.  

The natural question is whether we can get a good estimate of a rank-$r$ matrix $M_0$ using $\sim r$ measurements per column.  If the column space of $M_0$ is known, then this is clearly possible, either by designing the $\xi_k$ to live in this column space or even taking them to be random.  But in the (typical) case where we do not know the column space, we can see that there are some rank-$r$ matrices we will be able to estimate better than others. In particular, if all of the energy in $M_0$ is concentrated in a small number of columns, then we will not benefit from the low-rank nature of the matrix as much as if the energy were distributed more evenly.  In the extreme case where $M_0$ has a single non-zero column, we will need to use $\sim d_1$ measurements per column to recover the matrix (but most of these will be wasted on sensing columns that are zero) no matter what recovery scheme we use.

We expect that we will be able to better estimate matrices with roughly even energy distribution throughout their columns and so we will design our regularizer to favor such matrices. A natural way to favor $d_1\times d_2$ matrices that have roughly equal energy in the columns is through the  $\ell_1^{d_2}\rightarrow\ell_2^{d_1}$ operator norm
\begin{equation}
	\label{eq:12norm}
	\|M\|_{1\rightarrow 2} = \max_{j\in[d_2]} \|Me_j\|_2,
\end{equation}
which directly penalizes the maximum norm of the individual columns of $M$, thus encouraging these norms to be the same size.  
Alternatively, the regularizer in \eqref{eq:12norm} is derived through the ``maximum'' correlation of the matrix variable with all possible measurement matrices. 
Since $\xi_k$ has a continuous distribution, the maximization over $\xi_k$ can be simplified as follows: 
Recall that the maximum of $m$ i.i.d. subgaussian random variables is upper-bounded by the standard deviation within a logarithmic factor of $m$ with high probability. 
From this observation, we consider the maximum correlation defined by
\begin{equation}
\label{eq:max_cor_ds}
\max_{j \in [d_2]} \sqrt{\mathbb{E} (\xi^\top M e_j)^2} 
= \max_{j \in [d_2]} \|M e_j\|_2,
\end{equation}
where the entries of $\xi \in \mathbb{R}^{d_1}$ are i.i.d. following a subgaussian distribution, $\mathbb{E} \xi = 0$, and $\mathbb{E} \xi \xi^\top = I_{d_1}$. 
Note that the right-hand side of \eqref{eq:max_cor_ds} coincides with the norm in \eqref{eq:12norm}.

While the max column norm does not by itself favor matrices that are low rank, there exists a variation on the nuclear norm that complements $\|\cdot\|_{1\rightarrow 2}$.  We call this the \emph{mixed norm}:
\begin{equation}
	\label{eq:def_mixed_norm}
	\norm{M}_\mathrm{mixed} := \inf_{U,V : U V^\top = M} \norm{U}_\mathrm{F} \|V^\top\|_{1\to 2}. 
\end{equation}
Compared to the nuclear norm, which minimizes the product of the Frobenius norms of candidate factors $U,V$ (see \eqref{eq:nuc_norm_by_opt_factorization} below), the mixed norm uses the (isotropic) Frobenius norm for the left factor and $\|\cdot\|_{1\rightarrow2}$ for the right factor. 
As we will discuss in Section~\ref{sec:prelim} below, the mixed norm in \eqref{eq:def_mixed_norm} is what is known as the \emph{projective tensor norm} when viewing $M$ as a linear map from $\ell_1^{d_2}$ into $\ell_2^{d_1}$.  It favors low-rank matrices in a way that complements $\|\cdot\|_{1\rightarrow 2}$ as follows: while it is always true that $\|M\|_{1\rightarrow 2}\leq \|M\|_\mathrm{mixed}$, if $M$ is rank-$r$ we have the additional upper bound
\[
\norm{M}_{1 \to 2} \leq \norm{M}_\mathrm{mixed} \leq \sqrt{r} \norm{M}_{1\to 2}.
\]
In light of this, our regularizer for column-wise sketching will be
\begin{equation}
	\label{eq:tnorm_mixed}
	\tnorm{M} = \max\left( \norm{M}_{1\to 2}, \frac{\norm{M}_\mathrm{mixed}}{\sqrt{r}} \right).
\end{equation}
The intuition here is that if $M$ is rank-$r$, then the first term in \eqref{eq:tnorm_mixed} will be ``active'': among all rank-$r$ matrices, \eqref{eq:tnorm_mixed} favors those with smaller max column norms.  The convex set $\mathcal{C} = \{M \in \mathbb{R}^{d_1 \times d_2} : \tnorm{M} \leq \alpha\}$ can be interpreted as a relaxation of the set of all rank-$r$ matrices that have $\|M\|_{1\rightarrow 2}\leq \alpha$.

\vspace{.1in}
\noindent
\textbf{Matrix completion.}
In the matrix completion problem, we observe a subset of the entries of a low-rank matrix; the measurement matrices  $A_k$ are proportional to $\check{e}_{i_k} e_{j_k}^\top$, where $i_k \in [d_1]$, $j_k \in [d_2]$, and $\check{e}_i$ (resp. $e_i$) denotes the $i$th column of the identity matrix of size $d_1$ (resp. $d_2$).  We will consider the case where the indexes $i_k,j_k$ are chosen uniformly at random (and hence are independent of the matrix being sampled).  
It is again clear (and has been noted many times in the literature, e.g. \citep{candes2012exact}) that this type of measurement will not work equally well for all rank-$r$ matrices.  It will be most effective for matrices that are diffuse, meaning that they have their energy spread more or less evenly across their entries.

The natural way to favor matrices that are diffuse is by penalizing the maximum entry in the matrix,
\begin{equation}
	\label{eq:linfnorm}
	\|M\|_\infty = \max_{i \in [d_1], j \in [d_2]} | \check{e}_i^\top M e_j|
\end{equation}
Note that the right-hand side of \eqref{eq:linfnorm} corresponds to the maximum correlation of the matrix variable with the measurement matrices.

As with $\|\cdot\|_{1\rightarrow 2}$ norm for column-sketching, the $\|\cdot\|_\infty$ norm by itself does nothing to favor low-rank matrices.  There is, however, another variation on the nuclear norm, introduced by \citep{linial2007complexity} as the \emph{max norm}, that complements $\|\cdot\|_\infty$:
\begin{equation}
	\label{eq:def_max_norm}
	\norm{M}_\mathrm{max} := \inf_{U,V : U V^\top = M} \|U^\top\|_{1 \to 2} \|V^\top\|_{1\to 2}.
\end{equation}
Among matrices with fixed $\|\cdot\|_\infty$ norm, low-rank matrices will have a smaller max norm than general matrices.  In particular, rank-$r$ matrices obey
\[
	\|M\|_\infty\leq \|M\|_\mathrm{max}\leq \sqrt{r}\|M\|_\infty.
\]
Thus as a regularizer for matrix completion, we use
\begin{equation}
	\label{eq:tnorm_max}
	\tnorm{M} = \max\left( \norm{M}_{\infty}, \frac{\norm{M}_\mathrm{max}}{\sqrt{r}} \right).
\end{equation}
Again, the intuition here is that if the matrix is rank-$r$, then the $\|\cdot\|_\infty$ term above will be active; among all rank-$r$ matrices, \eqref{eq:tnorm_max} favors matrices that are diffuse.  Low-rank matrix completion with the penalty \eqref{eq:tnorm_max} was shown to be near-optimal in \cite{cai2016matrix}; in Section~\ref{subsec:main} below we show how this problem can be analyzed (again obtaining near-optimal results) using the same framework we develop for the column-sketching problem above.

\vspace{.1in}
\noindent
\textbf{Non-local random sketching.}  
For this measurement model, we observe ``non-local'' measurements of the form $A_1, \dots, A_n$ are independent copies of a random matrix of i.i.d. entries following $\mathcal{N}(0,1)$. 
In this case, the $A_k$ touches each part of $M_0$ equally, so it will not matter how the energy is distributed across the entries. 
Similar to the previous examples, we consider a norm derived from the maximum correlation by
\[
\max_{k \in [n]} \sqrt{\E \langle A_k, M \rangle^2} = \norm{M}_\mathrm{F}.
\]
We couple it with the standard nuclear norm 
\begin{equation}
\label{eq:nuc_norm_by_opt_factorization}
	\|M\|_* =  \inf_{U,V : U V^\top = M} \|U^\top\|_{\mathrm{F}} \|V^\top\|_{\mathrm{F}}
\end{equation}
so that the norm regularizer in \eqref{eq:gen_mat_lasso} becomes 
\begin{equation}
	\label{eq:tnorm_nl}
	\tnorm{M} = \max\left( \norm{M}_{\mathrm{F}}, \frac{\norm{M}_*}{\sqrt{r}} \right).
\end{equation}
Since $\|M\|_\mathrm{F} \leq \|M\|_* \leq \sqrt{r} \|M\|_\mathrm{F}$, this regularizer induces low-rankness. 
However, unlike the previous examples, the choice of the Frobenius norm in \eqref{eq:tnorm_nl} implies that we do not need to differentiate within the set of matrices that have the same rank.
Below we show that the analysis framework also applies to this example in a unifying way. 
If the noise terms $\eta_1,\dots,\eta_n$ are i.i.d. $\mathcal{N}(0,\sigma^2)$, then Proposition~\ref{prop:deterministic_errbnd} implies that the following result holds with probability $1-(d_1+d_2)^{-1}$: For all $M_0$ satisfying $\norm{M_0}_\mathrm{F} \leq \alpha$ and $\norm{M_0}_* \leq \sqrt{r} \alpha$, the estimate $\Phi$ by \eqref{eq:gen_mat_lasso} satisfies 
\begin{equation}
\label{eq:samp_comp_nonlocal}
\norm{\Phi-M_0}_\mathrm{F}^2 
\lesssim \alpha^2 \max\left(1, \frac{\sigma \sqrt{\log(d_1+d_2)}}{\alpha} \right) \cdot \sqrt{\frac{r(d_1+d_2)\log^6(d_1d_2)}{n}}
\end{equation}
provided $n \geq r(d_1+d_2) \log^6 (d_1d_2)$. (The proof is provided in Appendix~\ref{sec:nonlocal}.)
Let us consider a subset of the model where $M_0$ satisfies $\norm{M_0}_* \leq \sqrt{r} \norm{M_0}$, which corresponds to an approximately low-rank case. 
In this scenario, the error bound in \eqref{eq:samp_comp_nonlocal} is weaker than the analogous result obtained by simply constraining $\tnorm{M} = \norm{M}_*$ \citep[Eq. (II.7)]{candes2011tight} in the following senses: First, the error bound in \eqref{eq:samp_comp_nonlocal} does not vanish in the limit $\sigma \to 0$, which restricts its application to only high noise regimes. 
Furthermore, the error bound by \citet{candes2011tight} decays faster without the square root and any logarithmic factor.
The weakness of the result in \eqref{eq:samp_comp_nonlocal} is due to the extension of the model from $\{M: \norm{M}_* \leq \sqrt{r} \norm{M}_\mathrm{F}\}$ to the convex set $\{M: \norm{M}_\mathrm{F} \leq \alpha, \norm{M}_* \leq \sqrt{r}\alpha\}$. 

\subsection{A geometric characterization of estimation error bound}
We present a generalized analysis framework to derive recovery guarantees for the optimization program in \eqref{eq:gen_mat_lasso}. Our main goal is to bound the error between the estimate by \eqref{eq:gen_mat_lasso} and the ground truth $M_0$. 
In the following proposition, we provide a deterministic upper bound on the estimation error conditioned on the event when the measurement operator $\mathcal{A}$ satisfies specific properties. 
The proof of the proposition follows the technique used by \citet{cai2016matrix} to derive their error bound. We provide the proof in Appendix~\ref{sec:proof:prop:deterministic_errbnd} for completeness. 

\begin{prop}
	\label{prop:deterministic_errbnd}
	Let $\eta_1,\dots,\eta_n$ in \eqref{eq:meas_mdl} be i.i.d. drawn from $\mathcal{N}(0,\sigma^2)$. 
	Suppose that there exist parameters $\theta, \Gamma >0$ determined by $d_1,d_2,n,\alpha,r$ so that 
	\begin{equation}
		\label{eq:qub}
		\sup_{\vert\vert\vert M\vert\vert\vert \leq 1} \left| \frac{1}{n} \sum_{k=1}^n \mathrm{tr}(A_k^\top M )^2 - \| M \|_\mathrm{F}^2 \right| \leq \theta
	\end{equation}
	and
	\begin{equation}
		\label{eq:gaussian_complexity}
		\E_{(g_k)} \left\vert\xspace\left\vert\xspace\left\vert \sum_{k=1}^n g_{k} A_{k} \right\vert\xspace\right\vert\xspace\right\vert_{*} \leq \Gamma,
	\end{equation} 
	where $\tnorm{\cdot}_*$ denotes the dual norm of $\tnorm{\cdot}$ and $(g_k)_{k=1}^n$ is a sequence of i.i.d. standard Gaussian random variables.
	Let $R=\sup_{\vert\vert\vert M\vert\vert\vert\leq 1} \|M\|_{\mathrm{F}}$ be the radius of the $\tnorm{\cdot}$ unit ball in the Frobenius norm.
	Then the following statement holds with probability at least $1-\zeta$: for every $M_0 \in \{M~:~\tnorm{M}\leq\alpha\}$, the estimate by \eqref{eq:gen_mat_lasso} from the noisy measurements in \eqref{eq:meas_mdl} satisfies 
	\begin{equation}
		\label{eq:generic_upper_bound}
		\| \Phi - M_0\|_\mathrm{F}^2 \leq 4 \alpha^2 \theta + \frac{4\alpha\sigma  \Gamma}{n} + 2\pi \alpha \sigma \sqrt{\frac{2\log(2\zeta^{-1}) ( \theta + R^2)}{n}}.
	\end{equation} 
\end{prop}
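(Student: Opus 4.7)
The plan is to follow the standard basic-inequality argument for constrained LASSO-type estimators, with the regularizer $\tnorm{\cdot}$ taking the role that the $\ell_1$ norm plays in the original LASSO. Since $M_0$ is feasible and $\widehat{M}$ is a minimizer, $\|y-\mathcal{A}(\widehat{M})\|_2^2 \leq \|y-\mathcal{A}(M_0)\|_2^2$. Substituting $y=\mathcal{A}(M_0)+\eta$, expanding the squares, and writing $H:=\widehat{M}-M_0$ produces the basic inequality
\[
\|\mathcal{A}(H)\|_2^2 \;\leq\; 2\,\langle\eta,\mathcal{A}(H)\rangle.
\]
The triangle inequality gives $\tnorm{H}\leq \tnorm{\widehat{M}}+\tnorm{M_0}\leq 2\alpha$, and from here the two hypotheses in the proposition control the two sides.

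For the left-hand side, rescaling in \eqref{eq:qub} produces the homogeneous bound $\bigl|\|\mathcal{A}(M)\|_2^2-\|M\|_\mathrm{F}^2\bigr|\leq \theta\,\tnorm{M}^2$ for every $M$, hence
\[
\|\mathcal{A}(H)\|_2^2 \;\geq\; \|H\|_\mathrm{F}^2 - \theta\,\tnorm{H}^2 \;\geq\; \|H\|_\mathrm{F}^2 - 4\alpha^2\theta.
\]
For the right-hand side, the duality pairing $(\tnorm{\cdot},\tnorm{\cdot}_*)$ combined with H\"older's inequality gives
\[
|\langle\eta,\mathcal{A}(H)\rangle| = \left|\mathrm{tr}\left(\left(\sum_{k=1}^n\eta_k A_k\right)^\top H\right)\right| \leq \tnorm{H}\cdot\tnorm{\sum_{k=1}^n\eta_k A_k}_* \leq 2\alpha\,\tnorm{\sum_{k=1}^n\eta_k A_k}_*.
\]
Writing $\eta_k=\sigma g_k$ with $(g_k)$ i.i.d.\ standard Gaussian reduces the problem to controlling the single random variable $F(g) := \tnorm{\sum_{k=1}^n g_k A_k}_*$.

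The expectation of $F(g)$ is bounded by $\Gamma$ directly from hypothesis \eqref{eq:gaussian_complexity}, and for the deviation I would invoke a Gaussian concentration inequality for convex Lipschitz functions. The variational representation $F(g) = \sup_{\tnorm{M}\leq 1}\langle g,\mathcal{A}(M)\rangle$ shows that $F$ is convex with Lipschitz constant equal to $\sup_{\tnorm{M}\leq 1}\|\mathcal{A}(M)\|_2$. Combining \eqref{eq:qub} with the inequality $\|M\|_\mathrm{F}\leq R\,\tnorm{M}$ bounds this Lipschitz constant by $\sqrt{R^2+\theta}$. Applying Maurey's form of Gaussian concentration for convex Lipschitz functions (whose proportionality constant $\pi/2$ survives into the final bound) yields, with probability at least $1-\zeta$, a deviation of order $\pi\sqrt{(R^2+\theta)\log(2/\zeta)/2}$; multiplying by the prefactor $4\alpha\sigma$ inherited from the basic inequality reproduces exactly the term $2\pi\alpha\sigma\sqrt{2\log(2\zeta^{-1})(R^2+\theta)}$ in \eqref{eq:generic_upper_bound}.

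Combining the three bounds and solving for $\|H\|_\mathrm{F}^2$ delivers the claim. I expect the main obstacle to be the Lipschitz/concentration step: one must extract a sharp variance proxy for the Gaussian process $\{\langle g,\mathcal{A}(M)\rangle : \tnorm{M}\leq 1\}$ by simultaneously invoking \eqref{eq:qub} and the Frobenius-radius bound encoded in $R$, and one must track the $\pi$-valued absolute constant through the correct Maurey-type Gaussian deviation inequality. The remaining pieces---optimality, triangle, duality, and rescaling---are routine.
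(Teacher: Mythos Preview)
Your proposal is correct and follows essentially the same route as the paper: optimality plus feasibility give the basic inequality $\|\mathcal{A}(H)\|_2^2\leq 2\langle\eta,\mathcal{A}(H)\rangle$ with $\tnorm{H}\leq 2\alpha$, assumption \eqref{eq:qub} lower-bounds the left side by $\|H\|_\mathrm{F}^2-4\alpha^2\theta$, duality upper-bounds the right side by $4\alpha\sigma\tnorm{\sum_k g_k A_k}_*$, and the paper applies the same Maurey-type Gaussian concentration (cited as \cite[Theorem~4.7]{pisier1999volume}) with the Lipschitz constant $\sup_{\tnorm{M}\leq 1}\|\mathcal{A}(M)\|_2\leq\sqrt{\theta+R^2}$ that you identify. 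Your write-up of the basic inequality is in fact slightly cleaner than the paper's explicit expansion, but the content is identical.
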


The quantities $\theta$ and $\Gamma$ help characterize the geometry of the constraint set in \eqref{eq:gen_mat_lasso} for a given observation model. In particular, \eqref{eq:qub} describes how well the $\ell_2$-norm of the measurements are concentrated around that of the ground truth, and \eqref{eq:gaussian_complexity} describes the Gaussian complexity of the measurement operator over the convex set of candidate matrices. In order to obtain tight estimates of $\theta$ and $\Gamma$ for specific applications, we first interpret the matrix as an operator between suitably chosen Banach spaces. We then obtain tight upper bounds on $\theta$ and $\Gamma$ by computing the entropy numbers of operators between particular tensor products of the chosen Banach spaces. In the following sections, we provide upper bounds on the estimation error for the two applications, decentralized sketching and low-rank matrix completion. These error bounds are particular realizations of Proposition \ref{prop:deterministic_errbnd}.

\subsection{Statistical analysis of decentralized sketching}
\label{subsec:main}

Our main result provides an error bound for the estimator by \eqref{eq:gen_mat_lasso} when it applies to the decentralized sketching problem. 

\begin{theorem}
\label{thm:eb_ds}
Let $A_k = \sqrt{d_2} b_k e_{j_k}^\top$ with $j_k \in [d_2]$ satisfying $j_k \equiv k$ modulo $[d_2]$ for $k=1,\dots,Ld_2$ and $\tnorm{\cdot}$ be defined as in \eqref{eq:tnorm_mixed}. 
Suppose that $b_1,\dots,b_{Ld_2}$ are i.i.d. $\mathcal{N}(0,I_{d_1})$ and $\eta_1,\dots,\eta_{Ld_2}$ are i.i.d. $\mathcal{N}(0,\sigma^2)$. 
Then the following statement holds with probability $1 - (d_1+d_2)^{-1}$: 
For any matrix $M_0 \in K_\mathrm{mixed} := \{ M \in \mathbb{R}^{d_1 \times d_2} : \norm{M_0}_{1\to 2} \leq \alpha, \norm{M_0}_\mathrm{mixed} \leq \sqrt{r} \alpha\}$, the estimate $\Phi$ by \eqref{eq:gen_mat_lasso} satisfies 
\begin{equation}
\label{eq:ub1:ds}
\frac{1}{d_1 d_2} \|\Phi - M_0\|_\mathrm{F}^2 
\lesssim \frac{\alpha^2}{d_1} \cdot \max\left(1, \frac{\sigma\sqrt{\log(Ld_2)}}{\alpha\sqrt{d_2}}\right) \cdot \sqrt{\frac{r(d_1+d_2)\log^4(d_1+d_2)}{Ld_2}}. \end{equation}
\end{theorem}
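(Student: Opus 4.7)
The plan is to specialize Proposition~\ref{prop:deterministic_errbnd} to the decentralized-sketching operator $A_k=L^{-1/2}b_ke_{j_k}^\top$ and the regularizer in~\eqref{eq:tnorm_mixed}, so the problem reduces to controlling three quantities: the Frobenius radius $R=\sup_{\tnorm{M}\le 1}\|M\|_\mathrm{F}$, the uniform quadratic deviation $\theta$ in~\eqref{eq:qub}, and the Gaussian complexity $\Gamma$ in~\eqref{eq:gaussian_complexity}. Matching~\eqref{eq:generic_upper_bound} against the target~\eqref{eq:ub1:ds} with $\zeta=(d_1+d_2)^{-1}$, I expect to need $R\lesssim\sqrt{d_2}$, $\theta\lesssim\sqrt{rd_2(d_1+d_2)/L}$ up to polylogs, and $\Gamma\lesssim\sqrt{rd_2(d_1+d_2)}$ up to polylogs. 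Under these bounds the $\alpha^2\theta$ summand yields the $\alpha$ branch of the maximum in~\eqref{eq:ub1:ds}, the $\alpha\sigma\Gamma$ summand yields the $\sigma\sqrt{L\log(Ld_2)}$ branch, and the Gaussian-tail summand $\alpha\sigma\sqrt{\log(2\zeta^{-1})(\theta+R^2)}$ is absorbed into $\alpha\sigma\Gamma$.

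The bound $R\le\sqrt{d_2}$ is immediate from $\|M\|_{1\to 2}\le 1$. For $\Gamma$ I would first unpack $\tnorm{\cdot}_*$ via intersection-body duality: since $\{\tnorm{M}\le 1\}$ is the intersection of the $\|\cdot\|_{1\to 2}$-ball and a scaled $\|\cdot\|_\mathrm{mixed}$-ball, its polar is the convex hull of the polars, so $\tnorm{N}_*$ is the infimal convolution of the dual of $\|\cdot\|_{1\to 2}$ (the sum of column $\ell_2$-norms) and $1/\sqrt{r}$ times the dual of $\|\cdot\|_\mathrm{mixed}$, which is of $\gamma_2$-type and is nuclear-dominated. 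Writing $\sum_k g_kA_k=L^{-1/2}\sum_k g_kb_ke_{j_k}^\top$ and noting that the cyclic schedule $j_k\equiv k\pmod{d_2}$ makes the columns conditionally independent and isotropic, standard Gaussian matrix tail bounds combined with the low-rank interpretation of the dual body deliver the required $\Gamma$.

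The main obstacle is the uniform quadratic concentration~\eqref{eq:qub}. Expanding $\sum_k\mathrm{tr}(A_k^\top M)^2=L^{-1}\sum_k(b_k^\top Me_{j_k})^2$ shows that its mean over $b_k$ equals $\|M\|_\mathrm{F}^2$ by construction, so $\theta$ controls the supremum of a centered Gaussian chaos of order two, uniformly over the non-standard convex body $\{\tnorm{M}\le 1\}$. My plan is a generic-chaining/Talagrand $\gamma_2$-functional argument for the chaos, paired with a Bernstein-style truncation to handle the unbounded tails. The decisive ingredient, signalled in the abstract, is an entropy-number estimate for $\{\tnorm{M}\le 1\}$ in the process-induced metric obtained by Maurey's empirical method applied to the relevant tensor product of Banach spaces: given a near-optimal factorization $M=UV^\top$ certifying $\|M\|_\mathrm{mixed}\le\sqrt{r}$, one writes $M$ as a convex combination of rank-one atoms and approximates it by the empirical average of $s$ atoms drawn according to the factorization weights, incurring expected error of order $s^{-1/2}$ in the appropriate operator-type norm. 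Iterating dyadically yields the entropy integral that produces $\theta\lesssim\sqrt{rd_2(d_1+d_2)/L}$ up to the $\log^4$ factors appearing in~\eqref{eq:ub1:ds}.

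Substituting the three estimates into~\eqref{eq:generic_upper_bound} with $\zeta=(d_1+d_2)^{-1}$ and dividing by $d_1d_2$ then gives~\eqref{eq:ub1:ds}. I expect the Maurey entropy estimate for the intersection body $\{\tnorm{M}\le 1\}$ to absorb most of the technical work: neither the operator-norm ball $\{\|M\|_{1\to 2}\le 1\}$ nor the mixed-norm ball $\{\|M\|_\mathrm{mixed}\le\sqrt{r}\}$ has an off-the-shelf entropy bound at the scale needed here, and their joint geometry demands the tensor-product machinery to be developed with care.
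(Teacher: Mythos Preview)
Your overall plan---specialize Proposition~\ref{prop:deterministic_errbnd}, bound $R$, $\theta$, $\Gamma$, and take $\zeta=(d_1+d_2)^{-1}$---is exactly the paper's route, and your treatment of $\theta$ (order-two Gaussian chaos controlled by the Talagrand $\tilde\gamma_2$-functional together with Maurey-type entropy estimates) is essentially what the paper does, packaged there as the Krahmer--Mendelson--Rauhut theorem (their Theorem~\ref{thm:kmr}) fed by Proposition~\ref{prop:e21_pi2e_ellinfell2}.

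Where you diverge is the bound on $\Gamma$. You propose to compute $\tnorm{\cdot}_*$ explicitly via polar duality/infimal convolution and then hit $\sum_k g_k A_k$ with ``standard Gaussian matrix tail bounds.'' The paper does not touch the dual norm at all: it writes $\Gamma=\mathbb{E}_g\sup_{\tnorm{M}\le 1}\sum_k g_k\langle b_k,Me_{j_k}\rangle$, passes from Gaussian to Rademacher averages at the cost of the $\sqrt{\log(Ld_2)}$ factor you see in~\eqref{eq:ub1:ds}, and then uses the symmetry $r_k b_k\stackrel{d}{=}b_k$ to erase the Rademacher signs entirely. What remains is a \emph{single} Gaussian process in $(b_k)$, bounded by Dudley with the same entropy estimate from Proposition~\ref{prop:e21_pi2e_ellinfell2}. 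Your route is not wrong in principle, but the dual of $\|\cdot\|_\mathrm{mixed}$ is not a norm for which off-the-shelf random-matrix bounds exist, and the object $\sum_k g_k b_k$ is itself a chaos; you would likely end up re-deriving the entropy argument anyway, without the clean explanation of where $\sqrt{\log(Ld_2)}$ comes from. A smaller point: you anticipate needing entropy estimates for the \emph{intersection} body $\{\tnorm{M}\le 1\}$, but the paper simply uses $B_{\tnorm{\cdot}}\subset\sqrt{r}\,B_\mathrm{mixed}$ and covers the projective-tensor ball directly, so the ``joint geometry'' difficulty you flag does not arise.
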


Let $\alpha = \mu/\sqrt{d_2}$ for some $\mu \geq 1$. 
Then the intersection of $K_\mathrm{mixed}$ with the unit Frobenius-norm sphere $\mathbb{S}_\mathrm{F}$ coincides with the intersection of $\mathcal{C}_\mathrm{mixed} := \{M : \tnorm{M} \leq (\mu/\sqrt{d_2}) \norm{M}_\mathrm{F}\}$ with $\mathbb{S}_\mathrm{F}$. 
The latter set consists of unit-Frobenius norm matrices which are approximately rank-$r$ and the maximum of column norms is not too larger compared to the root-mean-square. 
The error bound in \eqref{eq:ub1:ds} applies to $\mathcal{C}_\mathrm{mixed} \cap \mathbb{S}_\mathrm{F}$. 
It is straightforward to verify that
\[
\mu = \sup_{M \in \mathcal{C}_\mathrm{mixed}\setminus\{0\}} \frac{\sqrt{d_2} \norm{M}_{1\to 2}}{\norm{M}_\mathrm{F}}.
\]
Then $\mu$ denotes the ``spikiness'' of the column norms. 
With 
\[
\text{SNR} = \frac{\sum_{k=1}^{Ld_2} \mathbb{E}[\mathrm{tr}(A_k^\top M_0)^2]}{\sum_{k=1}^{Ld_2} \mathbb{E}[\eta_k^2]}
= \frac{\norm{M_0}_\mathrm{F}^2}{\sigma^2},
\]
the error bound in \eqref{eq:ub1:ds} is rewritten as 
\begin{equation}
\label{eq:ub2:ds}
\frac{\|\Phi - M_0\|_\mathrm{F}^2}{\norm{M_0}_\text{F}^2}
\lesssim \mu^2 \cdot \max\left(1, \frac{\mu^{-1} \log(Ld_2)}{\text{SNR}^{1/2}} \right) \cdot \sqrt{\frac{r(d_1+d_2)\log^4(d_1+d_2)}{Ld_2}}.
\end{equation}
Particularly, in a high noise regime where $\text{SNR} = O(\mu^{-2} \log(Ld_2))$, we note that \eqref{eq:ub2:ds} further reduces to 
\begin{equation}
\label{eq:ub3:ds}
\frac{\|\Phi - M_0\|_\mathrm{F}^2}{\norm{M_0}_\text{F}^2}
\lesssim \sqrt{\frac{\text{SNR}^{-1} \cdot \mu^2 r(d_1+d_2)\log^4(d_1+d_2)}{Ld_2}}. 
\end{equation}
Since the error bound in \eqref{eq:ub3:ds} is invariant under scaling of $M_0$, it indeed applies to $\mathcal{C}_\mathrm{mixed}$, which consists of all non-spiky and approximately rank-$r$ matrices. 
That is, in order to achieve an $\epsilon$-accurate estimation in the normalized error, it suffices to obtain $\widetilde{O}(\epsilon^{-2} \cdot \text{SNR}^{-1} \cdot \mu^2 r(d_1+d_2))$ random local measurements. 
To show the tightness of the error bound in Theorem~\ref{thm:eb_ds}, we compare it to a matching lower bound given in the following theorem. 

\begin{theorem}
\label{thm:minimax_ds}
For the measurement model in \eqref{eq:meas_mdl}, suppose that the parameters satisfy 
\begin{equation*}
   \frac{48 \alpha^2}{d_1 \vee d_2} \leq \alpha^2 r \leq \frac{\sigma^2 d_1}{128 L}.
\end{equation*} 
Then the minimax $\norm{\cdot}_\mathrm{F}$-risk is lower-bounded as
\begin{equation}
\label{eq:minimax_decentralized}
    {\inf_{\Phi}} \sup_{M_0 \in K_\mathrm{mixed}} \frac{\E \|\Phi - M\|_\mathrm{F}^2 }{d_1 d_2} \geq \frac{\alpha^2}{16 d_1} \left( 1 \wedge \frac{\sigma}{\alpha\sqrt{d_2}} \sqrt{\frac{r(d_1 + d_2)}{L d_2}} \right).
\end{equation}
Further, when $Ld_2 > r(d_1+d_2)$, we have that 
\begin{equation}
    \label{eq:minimax_reformatted}
     {\inf_{\Phi}} \sup_{M_0 \in K_\mathrm{mixed}} \frac{\E \|\Phi - M\|_\mathrm{F}^2 }{d_1 d_2} \geq \frac{\alpha^2}{16 d_1} \sqrt{\frac{r(d_1+d_2)}{Ld_2}} \left( 1 \wedge \frac{\sigma}{\alpha\sqrt{d_2}} \right).
\end{equation}
\end{theorem}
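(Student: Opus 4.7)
The plan is to apply Fano's inequality to an appropriate packing of $K_\mathrm{mixed}$, tuning a single scalar to balance the KL budget against the packing cardinality.

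First I would construct a finite family $\{M^{(1)},\dots,M^{(N)}\}\subseteq K_\mathrm{mixed}$ of rank-$r$ matrices with $\log N \gtrsim r(d_1+d_2)$ and pairwise Frobenius distances confined to a narrow band $c_1\delta^2 \leq \|M^{(\ell)}-M^{(\ell')}\|_\mathrm{F}^2 \leq c_2\delta^2$. A natural construction sets $M^{(\ell)} = \tau\, U^{(\ell)}(V^{(\ell)})^\top$ with $(U^{(\ell)},V^{(\ell)}) \in \{\pm 1\}^{d_1\times r}\times\{\pm 1\}^{d_2\times r}$ drawn from a Varshamov--Gilbert code and $\tau>0$ tuned so that both $\|M^{(\ell)}\|_{1\to 2}\leq\alpha$ and $\|M^{(\ell)}\|_\mathrm{mixed}\leq\sqrt{r}\,\alpha$ hold.

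Next I would compute the KL divergence between the sampling distributions. Because each column index $j\in[d_2]$ receives exactly $L$ independent sketches $L^{-1/2}b_k^\top(\cdot)e_j$, one finds $\E_\mathcal{A}\|\mathcal{A}(M-M')\|_2^2 = \|M-M'\|_\mathrm{F}^2$, which together with Gaussian noise of variance $\sigma^2$ yields $\mathrm{KL}(P_{M^{(\ell)}}\,\|\,P_{M^{(\ell')}}) = \|M^{(\ell)}-M^{(\ell')}\|_\mathrm{F}^2/(2\sigma^2)$ for the joint distribution of $(y,\mathcal{A})$. Inserting this bound into Fano's inequality and converting identification into Frobenius estimation yields $\inf_{\widehat M}\sup_{M\in K_\mathrm{mixed}} \E\|\widehat M-M\|_\mathrm{F}^2 \gtrsim \delta^2$ whenever $\max_{\ell,\ell'}\|M^{(\ell)}-M^{(\ell')}\|_\mathrm{F}^2 \lesssim \sigma^2\log N$. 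Tuning $\tau$ to balance these two constraints produces two regimes: when the diameter $\alpha^2 d_2$ of $K_\mathrm{mixed}$ fits inside the information budget $\sigma^2 r(d_1+d_2)$, the bound saturates at $\alpha^2 d_2$; otherwise $\tau$ shrinks to match the budget and produces the interpolation term $\alpha\sigma\sqrt{d_2 r(d_1+d_2)}$ that appears in \eqref{eq:minimax_decentralized}. The reformulated bound \eqref{eq:minimax_reformatted} then follows by algebraic rearrangement under $Ld_2>r(d_1+d_2)$.

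The hardest step is the first: a single Varshamov--Gilbert packing must respect both norm constraints of $K_\mathrm{mixed}$, preserve $\log N \asymp r(d_1+d_2)$, and remain nearly equidistant in Frobenius norm (so that the ratio of max to min pairwise distance is $O(1)$), because a sharp Fano step needs all three simultaneously. The hypothesis $48\alpha^2/(d_1\vee d_2)\leq \alpha^2 r$ guarantees that a rank-$r$ packing of the required size fits inside $K_\mathrm{mixed}$, and $\alpha^2 r \leq \sigma^2 d_1 d_2/128$ keeps the problem in the regime where Fano is not vacuous. Once the packing is in place, the $L^{-1/2}$ scaling built into $\mathcal{A}$ trivializes the KL computation and the remainder is algebra.
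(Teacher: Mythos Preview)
Your overall architecture---Fano applied to a well-separated packing inside $K_\mathrm{mixed}$, together with the observation that the column-wise Gaussian sketches make $\mathbb{E}\,\mathrm{KL}(M\|M')=\|M-M'\|_\mathrm{F}^2/(2\sigma^2)$---is exactly the paper's route. The KL computation and the reduction to estimation are correct. But the packing you propose does \emph{not} deliver the interpolation term $\alpha\sigma\sqrt{d_2 r(d_1+d_2)}$ you claim; it only yields the weaker $\sigma^2 r(d_1+d_2)$, and this is a genuine gap.

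Here is the issue. With $M^{(\ell)}=\tau\,U^{(\ell)}(V^{(\ell)})^\top$ and $U^{(\ell)}\in\{\pm1\}^{d_1\times r}$, $V^{(\ell)}\in\{\pm1\}^{d_2\times r}$, the rank is pinned at $r$, so $\log N$ is fixed at order $r(d_1+d_2)$ regardless of $\tau$. Fano then forces the squared Frobenius diameter of the packing to be at most $c\sigma^2\log N\asymp\sigma^2 r(d_1+d_2)$, and the separation $\delta^2$ can be no larger. Combined with the membership constraint $\delta^2\lesssim\alpha^2 d_2$, the best you can extract is $\delta^2\asymp\min\bigl(\alpha^2 d_2,\sigma^2 r(d_1+d_2)\bigr)$, which in the interesting regime $\sigma^2 r(d_1+d_2)<\alpha^2 d_2$ is strictly smaller than the target $\alpha\sigma\sqrt{d_2 r(d_1+d_2)}$. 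The paper closes this gap by exploiting the fact that $K_\mathrm{mixed}$ is defined by the mixed-norm, not by rank: it introduces a parameter $\gamma\leq 1$ and builds matrices of rank $B=r/\gamma^2\geq r$ with entries $\pm\gamma\alpha/\sqrt{d_1}$ (specifically, a random $B\times d_2$ sign block repeated $d_1/B$ times vertically). The scaling keeps $\|M\|_\mathrm{mixed}\leq\sqrt{r}\,\alpha$, while the larger rank inflates $\log|\mathcal{M}|$ to order $r(d_1\vee d_2)/\gamma^2$. Now \emph{both} sides of the Fano inequality move with $\gamma$, the balancing condition becomes $\gamma^4\lesssim\sigma^2 r(d_1\vee d_2)/(\alpha^2 d_2)$, and the resulting separation is $\delta^2\asymp\gamma^2\alpha^2 d_2\asymp\alpha\sigma\sqrt{d_2 r(d_1\vee d_2)}$ as required.

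A secondary point: even setting aside the rate, your bilinear construction with both $U^{(\ell)}$ and $V^{(\ell)}$ varying does not give controlled Frobenius separation from Hamming separation on $(U,V)$---for instance, $(U,V)$ and $(-U,-V)$ are maximally Hamming-separated yet produce the same matrix. The standard fix is to freeze one factor; the paper's block-repetition construction avoids the issue entirely and makes both $\|M\|_{1\to2}$ and $\|M\|_\mathrm{mixed}$ exactly computable.
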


Note that, in a low SNR regime, the error bound in \eqref{eq:ub1:ds} matches the minimax lower bound in \eqref{eq:minimax_decentralized} up to a logarithmic factor. 
However, in a high SNR regime, where the $\mathrm{SNR}^{1/2} \mu^{-1}$ term is $o(1)$, the error bound in \eqref{eq:ub1:ds} is suboptimal. 
In particular, the estimator does not provide exact parameter recovery in the noiseless case ($\sigma = 0$). 

The only results in the literature that are directly comparable to Theorem~\ref{thm:minimax_ds} are \citep{nayer2019phaseless,nayer2021fast}.  These works show how $M_0$ can be recovered using an alternating minimization or gradient descent algorithm.  When the unknown matrix is exactly rank-$r$ and there is no noise in the measurements, they showed that alternating minimization (resp. gradient descent) provides an $\epsilon$-accurate estimate from $\widetilde{O}(\tilde{\mu}^2\max(d_1r^3,d_2r)\log(1/\epsilon))$ (resp. $\widetilde{O}(\tilde{\mu}^2 r^2(d_1+d_2) \log(1/\epsilon))$) measurements, where the incoherence parameter $\tilde{\mu} = \|V^\top\|_{1\to 2} \sqrt{d_2/r}$ corresponds to the ``spikiness'' of the right singular vectors. 
We compare these results to Theorem~\ref{thm:eb_ds} in the following perspectives. 

\textbf{Low-spikiness vs incoherence:} 
The incoherence parameter $\tilde{\mu}$ on singular vectors is upper-bounded by the product of the condition number $\kappa$ and the spikiness $\mu$ of $M_0$. 
This implies that their sample complexity depends on conditioning of the unknown matrix $M_0$. 
This dependence has been pointed out as a weakness of nonconvex algorithms for low-rank recovery compared to the convex counterpart. 
It might be possible to alleviate this dependence by adopting a scaled gradient descent algorithm as in \citep{tong2020accelerating}. 
However, in the presence of noise, the spectral initialization, on which the theoretical performance guarantees of these algorithms depend critically, cannot be made free from the dependence on $\kappa$. 
Furthermore, it is easy to construct matrices such that $\tilde{\mu}$ is much larger than $\mu$.  To see this, let $M$ be of rank $r-1$. 
Suppose that all left (resp. right) singular vectors are orthogonal to $u \in \mathbb{S}^{d_1-1}$ (resp. $e_1 \in \mathbb{S}^{d_2-1}$). 
Then $M_0 = M + t u e_1^\top$ is of rank-$r$. 
It follows that the right singular vectors of $M_0$ are given as those of $M$ and $e_1$. 
For small $t$, the spikiness $\mu$ of $M_0$ is similar to that of $M$. 
However, the incoherence $\tilde{\mu}$ of $M_0$ is close to the maximal value $\sqrt{d_2/r}$. 
In this scenario, if $\mu = O(1)$ and $r = O(1)$, then the sample complexity for alternating minimization is larger than that for the convex estimator in \eqref{eq:gen_mat_lasso} in order. 
Also note that the condition number $\kappa$ is very large for small $t$. 
Therefore, one may deduce that the convex estimator has advantages in its stable operation regardless of conditioning of $M_0$.

\textbf{Flexibility of model:} 
In an ideal scenario where $M_0$ is exactly rank-$r$ and measurements are noise-free, if one ignores the difference between $\mu$ and $\tilde{\mu}$, then the sample complexity for alternating minimization and gradient descent \citep{nayer2019phaseless,nayer2021fast} is significantly better than that in Theorem~\ref{thm:eb_ds} in terms of their dependence on $\epsilon$. 
To get within accuracy $\epsilon$, \citep{nayer2019phaseless,nayer2021fast} require $O(\log(1/\epsilon))$ measurements whereas Theorem \ref{thm:eb_ds} requires $O(1/\epsilon^2)$. However, Theorem \ref{thm:eb_ds} provides a uniform error bound over \textit{all} matrices that are \textit{approximately} low-rank. It is unclear how the results in \citep{nayer2019phaseless,nayer2021fast} could be extended to this more flexible model.

Furthermore, our result shows that the convex estimator in \eqref{eq:gen_mat_lasso} is consistent and provides a near optimal error bound in the presence of (not too weak) noise.
This result matches the minimax lower bound presented in Theorem \ref{thm:minimax_ds}.

\subsection{Low-rank matrix completion without incoherence conditions}

As shown earlier, a local measurement operator also arises in matrix completion. 
Even in the absence of noise, the unique identification of $M_0$ in matrix completion has been shown only under certain incoherence conditions imposed on its singular vectors. 
However, in practice, it is often not possible to verify that such incoherence conditions are satisfied by the ground-truth matrix $M_0$. 
On the contrary, in many applications like collaborative filtering, the entries of the unknown matrix are bounded by a certain threshold. 
Motivated by this observation, \citet{negahban2012restricted} considered an alternative estimation problem, in which the unknown matrix satisfies the above milder condition.
Specifically, they demonstrated that a LASSO estimator with the nuclear norm provides the minimax-optimal error bound when the noise level is higher than some threshold. 
Later the max-norm has been proposed as an alternative regularizer to the nuclear norm \citep{srebro2004maximum,srebro2005rank,linial2007complexity}. 
Particularly for completing low-rank matrices with bounded entries, it has been shown that the max-norm regularized estimator empirically outperforms that with the nuclear norm \citep{srebro2004maximum}.
\citet{foygel2011concentration} presented a near-optimal error bound. 
This result was extended to sampling with a non-uniform distribution and sharpened to an optimal error bound by \citet{cai2016matrix}. 

We provide an alternative derivation of the statistical analysis of the max-norm LASSO estimator for matrix completion. 
\citet{cai2016matrix} employed Bousquet’s version of Talagrand’s concentration inequality for empirical processes indexed by bounded functions. 
Indeed, this enabled to drop the logarithmic factor in the previous result \citep{foygel2011concentration}.
Our approach is based on a unifying characterization of the $\gamma_2$ norm via the projective tensor norm and Maurey's empirical method by \citet{carl1985inequalities}. 
Although our error bound resulted in an extra poly-log factor, we believe that this alternative analysis can be useful to understand other regression problems with a local measurement operator sharing similar structure. 
The result is presented in Section~\ref{sec:err_bnd}.

\subsection{Summary of contributions and related work}

In this paper, we present a unifying design principle and statistical analysis for a convex estimator for approximately low-rank matrices from local noisy measurements. 
The results are given as a near-optimal error bound specifically for two illustrating examples of decentralized sketching and matrix completion. 
The convex regularizer we design adapting to the structure in local measurement operators generalizes the max-norm and mixed-norm to a set of tensor norms. 
It has been empirically shown that tensor norms beyond the two examples also provide successful regularization for denoising \citep{bruer2017recovering}.

The relations among various tensor norms of rank-constrained linear operators are derived for selected pairs of Banach spaces via fundamental properties studied in classical functional analysis \citep{jameson1987summing}. 
In particular, we show that the mixed-norm and regularized inference with respect to it can be rewritten as a standard semidefinite program. 

Furthermore we present upper bounds on the entropy integral with respect to covering number between tensor products of Banach spaces via Maurey's empirical method \citep{carl1985inequalities}. 
This result leads to a version of restricted isometry property for the local linear operators arising in decentralized sketching and matrix completion. 
Finally, we show that these concentration results provide a near optimal error bound for the corresponding applications. 

Besides the aforementioned related work on decentralized sketching and matrix completion, there has been discussions on related problems in the literature. 
Multilinear regression \citep{negahban2011estimation} and sketching low-rank covariance \citep{anaraki2014memory,azizyan2018extreme} share similar structure to decentralized sketching. 
However, the problem settings and objectives are different from decentralized sketching in various perspectives and the results are not directly comparable. 
\citet{kliesch2016improving} considered a set of novel ideas on a related regularization problem. 
They extended the nuclear norm to square and diamond norms. The former improved the stability of inverse problem by optimizing the descent cone of the regularizer and the latter enabled regularization applies to a more general objects in operator spaces, which was inspired from key applications in quantum tomography. 

Another related work \citep{bruer2017recovering} studied the use of various tensor norms as a regularizer for denoising low-rank matrices. 
He considered various products of norms on the left and right factors of a low-rank matrix, which are chosen according to the underlying structure of the factors. 
Nonconvex optimization algorithms were proposed for denoising with these regularizers. It has been shown that a suitably chosen tensor norm empirically outperforms the other norms. 
This study is different from our approach since our choice of tensor norms is based on the inherent structure in the measurement process.

The rest of this paper is organized as follows: Preliminaries on tensor products and tensor norms are provided in Section~\ref{sec:prelim}.
The choice of tensor norms for matrix completion and descentralized sketching is discussed in the language of tensor products in Section~\ref{sec:estimators}.
This is followed by the analysis of tensor norms on selected pairs of Banach spaces in Section~\ref{sec:rankineq}. 
We present the covering number analysis of tensor products in Section~\ref{sec:ent_est}. Then we apply these results to get an error bound in Section~\ref{sec:err_bnd}. 
A matching information-theoretic lower bound is derived in Section~\ref{sec:minimax}. 
We conclude the paper with discussions and future directions.

\section{Notations and preliminaries}
\label{sec:prelim}

In this section, we introduce notations used throughout the paper, and recall the definition of a tensor product of two Banach spaces and various norms defined on it.

For a positive integer $d$, let $[d]$ denote the set $\{1,2,\dots,d\}$. 
For real numbers $a$ and $b$, let $a \vee b$ and $a \wedge b$ denote the maximum and minimum of $\{a,b\}$. 
For a matrix $M$, its transposition is denoted by $M^\top$. 
For a linear operator $\mathcal{A}$, we use $\mathcal{A}^*$ to denote the adjoint operator of $\mathcal{A}$. 
For a vector space $X$, the dual space consisting of all linear functionals on $X$ is denoted by $X^*$.
We use $\otimes$ for both tensor product and Kronecker product, the distinction will be clear in the context. 
By $a \lesssim b$, we mean that there is an absolute constant $C$ such that $a \leq C b$. 
Throughout the paper, $c, C, c_k, C_k$ will denote absolute constants which might vary line to line. 

\subsection{Algebraic tensor product}

The algebraic tensor product of two vector spaces $X$ and $Y$, denoted by $X \otimes Y$, is a set of all finite sum of outer products of two vectors respectively from $X$ and $Y$. 
An element in $X \otimes Y$ acts as a bilinear function on $X^* \times Y^*$, i.e. 
\[
\left(\sum_{k=1}^r x_k \otimes y_k \right) (x^* \otimes y^*) = \sum_{k=1}^r x^*(x_k) y^*(y_k),
\]
where $x_1, \dots, x_r \in X$, $y_1, \dots, y_r \in Y$, $x^* \in X^*$, and $y^* \in Y^*$. 
Alternatively, an element in $X \otimes Y$ acts as a linear operator mapping $X^*$ to $Y$, i.e. 
\[
\left(\sum_{k=1}^r x_k \otimes y_k \right) x^* = \sum_{k=1}^r x^*(x_k) y_k. 
\]
Indeed, $X \otimes Y$ is embedded into the set $B(X^*,Y^*)$ of all bilinear functions on $X^* \times Y^*$ or into the set $L(X^*,Y)$ of all linear operators from $X^*$ to $Y$ via an isomorphism. 
Particularly when $X$ and $Y$ are finite-dimensional, then the aforementioned inclusions become equality. 
More details are referred to the monographs by \citet{defant1992tensor} and by 
\citet{diestel2008metric}. 

In this paper, we only consider the finite-dimensional case. 
Thus a matrix $M \in \mathbb{R}^{m \times n}$, as the matrix representation of a linear operator from a vector space of dimension $n$ to another vector space of dimension $m$, is interpreted as an element in the corresponding tensor product of vector spaces.

\subsection{Tensor norms}

For Banach spaces $X$ and $Y$, there exist various ways to define a norm on $X \otimes Y$. 
A norm $\norm{\cdot}$ on $X \otimes Y$ is called a \textit{tensor norm} \citep[p. 147]{defant1992tensor} or \textit{reasonable crossnorm} \citep{grothendieck1956resume} \cite[p. 5]{diestel2008metric} if it satisfies 
\[
\norm{x \otimes y} \leq \norm{x}_X \norm{y}_Y, \quad \forall x \in X, \, y \in Y,
\]
and its dual norm, denoted by $\norm{\cdot}_*$, satisfies
\[
\norm{x^* \otimes y^*}_* \leq \norm{x^*}_{X^*} \norm{y^*}_{Y^*}, \quad \forall x^* \in X^*, \, y^* \in Y^*.
\]
Throughout this paper, we utilize a selected set of tensor norms summarized below. \\

\noindent\textbf{Injective norm: } The \textit{injective norm} is induced by viewing $X \otimes Y$ as a subspace of $B(X^*,Y^*)$, i.e.
\[
\|T\|_\vee := 
\sup_{x^* \in B_{X^*}, y^* \in B_{Y^*}} |\langle x^* \otimes y^*, T\rangle|,
\]
where $B_{X^*}$ and $B_{Y^*}$ are the unit balls in $X^*$ and $Y^*$ respectively \citep[p. 46]{defant1992tensor}. 
Then $X \mathbin{\widecheck{\otimes}} Y$ will denote the corresponding Banach space obtained via completion of $X \otimes Y$ with respect to the injective norm. 
The injectivity of the injective norm refers to the following property: If $Z$ is a closed subspace of $X$, then $Z \mathbin{\widecheck{\otimes}} Y$ is also a closed subspace of $X \mathbin{\widecheck{\otimes}} Y$ \citep[Proposition~1.1.6]{diestel2008metric}. 
Furthermore, the injective tensor norm is the smallest tensor norm on $X \otimes Y$.
Since $X \otimes Y$ is also isometrically isomorphic to a subspace of $L(X^*,Y)$, the injective tensor norm of $T$ coincides with the operator norm of $T \in L(X^*,Y)$. \\

\noindent\textbf{Projective norm: } The \textit{projective norm} is the largest tensor norm on $X \otimes Y$ and is defined as the Minkowski gauge function of the absolutely convex hull of $B_X \otimes B_Y := \{x \otimes y ~:~ x \in B_X, ~ y \in B_Y\}$, where $B_X$ and $B_Y$ denote the unit ball in $X$ and $Y$, respectively \cite[p. 27]{defant1992tensor}. 
Then the completion of $X \otimes Y$ with respect to the projective norm gives the Banach space $X \mathbin{\widehat{\otimes}} Y$. 
The projectivity of the projective norm implies that for a subspace $Z$ of $X$, $(X/Z) \mathbin{\widehat{\otimes}} Y$ is a quotient of $X \mathbin{\widehat{\otimes}} Y$, where $X/Z$ denotes the quotient of $X$ with respect to $Z$ \citep[Proposition~1.1.7]{diestel2008metric}. 
It has been shown (e.g. \cite{grothendieck1956resume}, \citep[Proposition 1.1.4]{diestel2008metric}, \citep[p. 27]{defant1992tensor}) that the projective norm can be computed as 
\[
\|T\|_\wedge = \inf \left\{ \sum_{k=1}^n \|x_k\|_X \|y_k\|_Y \, : \, n \in \mathbb{N}, \, T = \sum_{k=1}^n x_k \otimes y_k \right\}.
\]
It has been shown \citep[p.~3]{pisier1986factorization} \citep[p.~19]{jameson1987summing} that if $X$ or $Y$ is finite-dimensional, then the projective norm coincides with the \textit{$1$-nuclear norm} defined by
\[
\nu_1(T) := \inf \left\{ \sum_{k=1}^\infty \|x_k\|_X \|y_k\|_Y \, : \, T = \sum_{k=1}^\infty x_k \otimes y_k \right\}.
\]

The injective and projective tensor products provide a duality as follows: $X \mathbin{\widecheck{\otimes}} Y$ is isometrically embedded into a subspace of $(X^* \mathbin{\widehat{\otimes}} Y^*)^*$ \cite[p. 47]{defant1992tensor}. Furthermore, if $X$ and $Y$ are finite-dimensional, then $X \mathbin{\widecheck{\otimes}} Y= (X^* \mathbin{\widehat{\otimes}} Y^*)^*$. 

There exist tensor norms induced by viewing $T \in X \otimes Y$ as a linear operator in $L(X^*,Y)$. 
We consider the following Banach-space-valued sequence spaces. 
Let $\ell_2^\mathrm{strong}(X)$ denote the Banach space of strongly $2$-summable $X$-valued sequences equipped with the norm $\norm{(X_k)}_{\ell_2^\mathrm{strong}(X)} = \norm{(\norm{x_k}_X)}_2$. 
Let $\ell_2^\mathrm{weak}(X)$ denote the Banach space of weakly $2$-summable $X$-valued sequences equipped with the norm $\norm{(x_k)}_{\ell_2^\mathrm{weak}(X)} = \sup_{x^* \in B_{X^*}} (\sum_k |x^*(x_k)|^2)^{1/2}$. 
Let $\tilde{T}$ be the linear operator acting on an $X^*$-valued sequence $(x^*_k)$ via coordinate-wise application of $T$. \\

\noindent\textbf{$2$-summing norm: } 
Then the \textit{$2$-summing norm} of $T$, denoted by $\pi_2(T)$, is defined as the operator norm of $\tilde{T}: \ell_2^\mathrm{weak}(X^*) \rightarrow \ell_2^\mathrm{strong}(Y)$ \citep{diestel1985introduction}. 
In other words, $\pi_2(T)$ is computed as the smallest constant $c > 0$ that satisfies
\[
\sum_k \|T x^*_k\|_Y^2
\leq c^2 \, \sup_{x \in B_{X^{**}}} \sum_k |\langle x, x^*_k\rangle|^2
\]
for all sequences $(x^*_k) \subset X^*$. 
In particular, if $X$ and $Y$ are Hilbert spaces, $\pi_2(T)$ coincides with the Hilbert-Schmidt norm, which is the Frobenius norm when $T$ is a finite matrix \citep[Proposition~1.9]{pisier1986factorization}. \\

\noindent\textbf{$\gamma_2$ norm: } 
Again, viewing $X \otimes Y$ as a subspace of $L(X^*,Y)$, the \textit{$\gamma_2$ norm} of $T \in X \otimes Y$ is induced by factorization via an Hilbert space as follows \citep[p.~21]{pisier1986factorization}:
\begin{align*}
\gamma_2(T) &:= \inf \{ \|T_1\| \|T_2\| \,:\, d \in \mathbb{N}, T_1 \in B(X^*,\ell_2^d), \, T_2 \in B(\ell_2^d,Y), \, T = T_2 T_1 \}.
\end{align*}
Alternatively, $\gamma_2(T)$ is also written as
\[
\gamma_2(T) = \inf \left\{ 
\sup_{x^* \in B_{X^*}} \left( \sum_k |x^*(x_k)|^2 \right)^{1/2} 
\sup_{y^* \in B_{Y^*}} \left( \sum_k |y^*(y_k)|^2 \right)^{1/2} \,:\, T = \sum_k x_k \otimes y_k
\right\}.
\]
Due to \citep[Theorem~2.4]{pisier1986factorization}, $\gamma_2(T)$ can be computed as the smallest constant $c > 0$ that satisfies
\[
\sum_k \|T z^*_k\|_Y^2
\leq c^2 \, \sum_k \norm{x^*_k}_{X^*}^2
\]
for all sequences $(x^*_k), (z^*_k) \subset X^*$ such that $(x^*_k)$ dominates $(z^*_k)$ in $\ell_2^\mathrm{weak}(X^*)$, i.e. 
\[
\sup_{x \in B_{X^{**}}} \sum_k |\langle x, z^*_k\rangle|^2
\leq
\sup_{x \in B_{X^{**}}} \sum_k |\langle x, x^*_k\rangle|^2.
\] 
Therefore, it always hold that $\gamma_2(T) \leq \pi_2(T)$. \\

\noindent\textbf{Relation to matrix norms: } The above tensor norms reduce to well-known matrix norms in some special cases. 
Let us consider the first example where $X = \ell_2^n$ and $Y = \ell_2^m$. 
Then $\norm{T}_\vee$ and $\gamma_2(T)$ become the spectral norm of $T \in \mathbb{R}^{m \times n}$, which is the largest singular value. 
Additionally, $\norm{T}_\wedge$ becomes the nuclear norm of $T$, i.e. the sum of all singular values. Moreover, $\pi_2(T)$ becomes the Frobenius norm. 
Next, we consider the case where $X = \ell_\infty^n$ and $Y = \ell_\infty^n$. 
First $\norm{T}_\vee$ becomes the largest entry of $T \in \mathbb{R}^{n \times n}$ in magnitude. 
Let $(a_{ij})$ denote the $(i,j)$th entry of $T \in \mathbb{R}^{n \times n}$, i.e. $T = \sum_{i,j=1}^n a_{ij} e_i e_j^\top$, where $(e_i)$ corresponds to the canonical basis for $\mathbb{R}^n$. 
Furthermore, the projective norm is computed by
\begin{equation}
\label{eq:projnorm_infinf}
\norm{T}_\wedge = \sup \left\{ \left| \sum_{i,j=1}^n a_{ij} \alpha_i \beta_j \right| \,:\, \sup_{i \in [n]} |\alpha_i| \leq 1, \, \sup_{i \in [n]} |\beta_i| \leq 1 \right\}.
\end{equation}
Finally, $\gamma_2(T)$ is represented as 
\begin{equation}
\label{eq:gamma2_infinf_littlgGT}
\gamma_2(T) = \inf_{H} \left\{ \sup_{i \in [n]} \norm{u_i}_H \sup_{j \in [n]} \norm{v_j}_H \,:\, (u_i), (v_j) \subset H, \, a_{ij} = \langle u_i, v_j \rangle, \forall i,j \in [n] \right\},
\end{equation}
where the infimum is taken over all Hilbert spaces.

\section{Tensor-norm-based estimators}
\label{sec:estimators}

In this section, we justify the choice of matrix norms for matrix completion and decentralized sketching in the language of tensor product.
We first recall the known results on matrix completion via $\ell_\infty^n \otimes \ell_\infty^m$ in the literature. \citet{linial2007complexity} showed that the $\gamma_2$ norm on $\ell_\infty^n \otimes \ell_\infty^m$ is upper-bounded by the operator norm multiplied by the square root of the rank. In fact, their result is derived from the fact that the Banach-Mazur distance (see e.g. \citep{pisier1999volume})  between a finite-dimensional Banach space and a Hilbert space is no larger than the square root of the dimension. 
Therefore their result \cite[Lemma~4.2]{linial2007complexity} applies to any pair of Banach spaces. 
We paraphrase it as the following lemma. 

\begin{lemma}[{A paraphrased version of \cite[Lemma~4.2]{linial2007complexity}}]
\label{lemma:linial}
Suppose that $T \in X \otimes Y$ with Banach spaces $X,Y$ satisfies $\mathrm{rank}(T) \leq r$. Then
\[
\|T\| \leq \gamma_2(T) \leq \sqrt{r} \|T\|.
\]
\end{lemma}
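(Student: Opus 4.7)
The lower bound $\|T\| \leq \gamma_2(T)$ is immediate from submultiplicativity of the operator norm: for any factorization $T = T_2 T_1$ with $T_1 \in B(X^*,\ell_2^d)$ and $T_2 \in B(\ell_2^d,Y)$, one has $\|T\| \leq \|T_1\| \|T_2\|$, and taking the infimum over all such factorizations yields the claim.

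For the upper bound $\gamma_2(T) \leq \sqrt{r}\|T\|$, my plan is to exploit the rank constraint by first factoring $T$ through an $r$-dimensional subspace of $Y$, and then replacing that subspace with $\ell_2^r$ at the cost of a multiplicative factor $\sqrt{r}$ via the Banach--Mazur distance estimate. Concretely, let $Z := T(X^*) \subseteq Y$, equipped with the norm inherited from $Y$; by hypothesis $\dim Z \leq r$. Write $T = \iota \circ \widetilde{T}$, where $\widetilde{T}: X^* \to Z$ is just $T$ with restricted codomain and $\iota : Z \hookrightarrow Y$ is the inclusion. Then $\|\widetilde{T}\| = \|T\|$ and $\|\iota\| = 1$.

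The next step is to invoke John's theorem: the Banach--Mazur distance between any $k$-dimensional Banach space and $\ell_2^k$ is at most $\sqrt{k}$. Applied to $Z$, this yields an isomorphism $U: Z \to \ell_2^{\dim Z}$ with $\|U\| \|U^{-1}\| \leq \sqrt{\dim Z} \leq \sqrt{r}$. Inserting $U^{-1} U$ between $\iota$ and $\widetilde{T}$ gives
\[
T = (\iota \circ U^{-1}) \circ (U \circ \widetilde{T}) =: T_2 \circ T_1,
\]
with $T_1 : X^* \to \ell_2^{\dim Z}$ and $T_2 : \ell_2^{\dim Z} \to Y$. The estimates $\|T_1\| \leq \|U\| \|T\|$ and $\|T_2\| \leq \|U^{-1}\|$ combine to give $\|T_1\| \|T_2\| \leq \sqrt{r}\|T\|$, which by definition of $\gamma_2$ completes the proof.

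The only nontrivial ingredient here is John's theorem supplying the $\sqrt{r}$ Banach--Mazur distance bound; everything else is a routine bookkeeping of operator norms along a two-step factorization. The mild subtlety to watch is the choice of norm on the intermediate space $Z$: by taking the norm inherited from $Y$, both $\|\widetilde{T}\| = \|T\|$ and $\|\iota\| = 1$ hold simultaneously, which is what makes the factor of $\sqrt{r}$ (rather than some larger quantity) accurate.
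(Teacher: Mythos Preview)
Your proof is correct and matches the approach the paper indicates: the paper does not write out a full proof but explicitly states that the result ``is derived from the fact that the Banach--Mazur distance \ldots\ between a finite-dimensional Banach space and a Hilbert space is no larger than the square root of the dimension,'' which is precisely the John's theorem step you invoke after factoring $T$ through its range. Your bookkeeping with $\widetilde{T}$, $\iota$, and the insertion of $U^{-1}U$ is the standard way to make that one-line sketch rigorous.
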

In the case of $\ell_\infty^n \otimes \ell_\infty^m$, the $\gamma_2$ norm is equal to the max norm defined in \eqref{eq:def_max_norm}.
Further, it has been shown \cite{srebro2004maximum} that the max-norm is computable via a semi-definite formulation given by
\begin{equation}
\label{eq:max_norm_semidefinite}
\begin{array}{lll}
\|M\|_{\max}
= & \displaystyle \underset{W_1, W_2}{\inf} & \max \left ( \| \mathrm{diag}(W_1)\|_{\infty} , \|\mathrm{diag}(W_2) \|_\infty \right ) \\
& \mathrm{s.t} & \begin{bmatrix} W_1 & M\\ M^\top & W_2 \end{bmatrix} \succeq 0.
\end{array}
\end{equation} 
Therefore, the estimator in \eqref{eq:gen_mat_lasso} for matrix completion with the regularizer in \eqref{eq:tnorm_max} can be rewritten as a standard form of a semidefinite program (SDP) given by
\begin{equation}
\label{eq:max_norm_estimator_SDP}
\begin{array}{ll}
\displaystyle \mathop{\text{minimize}}_{W_1, W_2, M} & \|y - \mathcal{A}(M)\|_2^2 \\
\text{subject to} & \|M\|_\infty \leq \alpha, \ \|\mathrm{diag}(W_1)\| \leq R, \ \|\mathrm{diag}(W_2)\| \leq R,
\\[5pt]
& \begin{bmatrix} W_1 & M\\ M^\top & W_2 \end{bmatrix} \succeq 0.
\end{array}
\end{equation}
\citet{cai2016matrix} proposed an ADMM (alternating direction method of multipliers) algorithm based on an equivalent convex formulation, which scales more efficiently than general purpose solvers for SDP. 
In the remainder of this section, we will establish analogous results for decentralized sketching via the tensor product $\ell_\infty^n \otimes \ell_2^m$. 
Although Lemma~\ref{lemma:linial} also applies to $\ell_\infty^n \otimes \ell_2^m$, it has not been known whether the $\gamma_2$ norm on $\ell_\infty^n \otimes \ell_2^m$ can be computed easily as a standard convex optimization problem. 
Therefore, we consider a different tensor norm on $\ell_\infty^n \otimes \ell_2^m$, which is polynomially computable and provides a proxy of rank similar to Lemma~\ref{lemma:linial} simultaneously.

\subsection{Low-rankness inducing property of mixed norm}

First, we show that analogous relationships exist between another pair of tensor norms for rank-$r$ linear operators in $\ell_\infty^n \otimes \ell_2^m$. 
Indeed, we start with a slightly generalized result that applies to $X \otimes \ell_2^m$ for a general Banach space $X$.
The following lemma, proved in Appendix~\ref{sec:proof:lemma:pi2a_op}, shows that the 2-summing norm of the adjoint does not exceed the operator norm multiplied by the square root of the rank.

\begin{lemma}
\label{lemma:pi2a_op}
Let $T \in X \otimes \ell_2^m$, where $X$ is a Banach space. 
Suppose that $T^*$ is 2-summing and $\mathrm{rank}(T) \leq r$. Then we have
\[
\|T\| \leq \pi_2(T^*) \leq \sqrt{r} \|T\|.
\]
\end{lemma}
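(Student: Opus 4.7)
The plan splits naturally along the two inequalities.

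For the lower bound $\|T\|\leq \pi_2(T^*)$, I would apply the defining inequality of the $2$-summing norm to a single nonzero vector $y\in \ell_2^m$: it gives $\|T^* y\|_{X^{**}}\leq \pi_2(T^*)\,\sup_{x\in B_{\ell_2^m}}|\langle x,y\rangle|=\pi_2(T^*)\|y\|_{\ell_2}$. Taking the sup over unit vectors $y$ yields $\|T^*\|\leq \pi_2(T^*)$, and since $\|T^*\|=\|T\|$ the lower bound follows.

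For the upper bound $\pi_2(T^*)\leq \sqrt{r}\,\|T\|$, the idea is to exploit the rank hypothesis by factoring $T^*$ through a low-dimensional Hilbert space. Because $\mathrm{rank}(T)\leq r$, the range $W:=T(X^*)\subset \ell_2^m$ is a subspace of dimension at most $r$. Let $P:\ell_2^m\to\ell_2^m$ be the orthogonal projection onto $W$. Then $PT=T$; taking adjoints yields $T^*=T^* P$. The ideal property of the 2-summing norm then gives
\[
\pi_2(T^*)\;=\;\pi_2(T^* P)\;\leq\;\|T^*\|\,\pi_2(P)\;=\;\|T\|\,\pi_2(P).
\]

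What remains, and is the only nontrivial step, is to show $\pi_2(P)\leq \sqrt{r}$. I would write $P=\iota p$ with $p:\ell_2^m\to\ell_2^r$ the coordinate projection on an orthonormal basis of $W$ and $\iota:\ell_2^r\to\ell_2^m$ the corresponding isometric inclusion, so that the ideal property again yields $\pi_2(P)\leq \|\iota\|\,\pi_2(p)=\pi_2(p)$. A direct computation from the definition then shows $\pi_2(p)\leq \sqrt{r}$: for any finite family $(y_i)\subset\ell_2^m$, expanding in the orthonormal basis and swapping summations gives $\sum_i\|p y_i\|^2=\sum_{j=1}^r\sum_i|\langle y_i,u_j\rangle|^2\leq r\,\sup_{y\in B_{\ell_2^m}}\sum_i|\langle y,y_i\rangle|^2$, where $u_j=p^* e_j$ has norm one. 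Combining this with the display above yields $\pi_2(T^*)\leq\sqrt{r}\,\|T\|$.

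The main obstacle is really just the $\pi_2(P)\leq \sqrt{r}$ calculation; once that classical bound (equivalently, $\pi_2(\mathrm{id}_{\ell_2^r})=\sqrt{r}$) is in hand, the proof is a one-line application of the operator-ideal property of $\pi_2$. No deeper structural fact about $X$ is needed, which is why the lemma holds for an arbitrary Banach space $X$.
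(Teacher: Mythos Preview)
Your proposal is correct and follows essentially the same route as the paper: both factor $T^*=T^*P$ through the orthogonal projection $P$ onto the (at most $r$-dimensional) range of $T$, apply the ideal property $\pi_2(T^*P)\leq \|T^*\|\,\pi_2(P)$, and conclude via $\pi_2(P)\leq\sqrt{r}$; for the lower bound both use $\|T\|=\|T^*\|\leq\pi_2(T^*)$. The only cosmetic difference is that you compute $\pi_2(P)\leq\sqrt{r}$ directly from the definition, whereas the paper cites \cite[Proposition~3.5]{jameson1987summing} after reducing $P$ to a diagonal operator by unitary invariance.
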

\noindent The result in Lemma~\ref{lemma:pi2a_op} applies beyond the case $X = \ell_\infty^n$. 
For example, when $X = \ell_2^n$, the 2-summing and operator norms become the Frobenius and spectral norms, respectively. 

Next we show that $\pi_2(T^*)$ for $T \in \ell_\infty^n \otimes \ell_2^m$ is written through an optimal factorization via a Hilbert space similarly to the $\gamma_2$ norm on the tensor product $\ell_\infty^n \otimes \ell_\infty^m$. 
The following lemma is used to derive the desired result. Its proof is delegated to Appendix~\ref{sec:proof:lemma:p2ANDp}. 
\begin{lemma}
\label{lemma:p2ANDp}
Let $T \in X \otimes Y$ with $X$ complete. Then
\begin{equation}
\label{eq:pi2Tstar1}
\begin{aligned}
\pi_2(T^*)
& := \inf \{ \pi_2(T_1^*) \|T_2^*\| \,:\, d \in \mathbb{N}, \, T_1^* \in L(Y^*,\ell_2^d), \, T_2^* \in L(\ell_2^d,X), \, T^* = T_2^* T_1^* \}.
\end{aligned}
\end{equation}
\end{lemma}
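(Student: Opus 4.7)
The plan is to prove the identity by establishing the two inequalities separately: one direction follows from the operator-ideal property of $\pi_2$, and the other from Pietsch's factorization theorem combined with a finite-rank reduction to reach the finite-dimensional Hilbert space $\ell_2^d$.

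For the easy direction, that the left-hand side is at most the infimum, I would use the standard ideal property $\pi_2(AB)\leq \|A\|\,\pi_2(B)$: for any admissible factorization $T^*=T_2^*T_1^*$ with $T_1^*\in L(Y^*,\ell_2^d)$ and $T_2^*\in L(\ell_2^d,X)$, this yields $\pi_2(T^*)\leq \|T_2^*\|\,\pi_2(T_1^*)$, and taking the infimum over admissible factorizations gives $\pi_2(T^*)\leq$ RHS.

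For the reverse direction I would invoke Pietsch's factorization theorem. Since $T^*:Y^*\to X$ is 2-summing, there is a probability measure $\mu$ on the weak-$*$ compact unit ball of $Y^{**}$ such that $\|T^*y^*\|_X\leq \pi_2(T^*)\bigl(\int |\langle y^*,\xi\rangle|^2\,d\mu(\xi)\bigr)^{1/2}$ for every $y^*\in Y^*$. Defining the canonical embedding $\iota:Y^*\to L_2(\mu)$ by $\iota(y^*)(\xi)=\langle y^*,\xi\rangle$, the Pietsch bound produces a well-defined linear operator $S:\overline{\iota(Y^*)}\to X$ with $S\iota=T^*$ and $\|S\|\leq \pi_2(T^*)$. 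A short direct computation using Fubini and the definition of the supremum in the 2-summing norm shows $\pi_2(\iota)\leq 1$, so the factorization $T^*=S\iota$ through a Hilbert space already satisfies $\|S\|\,\pi_2(\iota)\leq \pi_2(T^*)$.

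The final step is to replace the possibly infinite-dimensional Hilbert space $\overline{\iota(Y^*)}$ by some $\ell_2^d$. Since $T\in X\otimes Y$ lies in the \emph{algebraic} tensor product, $T^*$ has finite rank $r$, hence $\mathrm{range}(S|_{\overline{\iota(Y^*)}})=\mathrm{range}(T^*)$ is at most $r$-dimensional, which forces $\ker(S|_{\overline{\iota(Y^*)}})^{\perp}$ inside the Hilbert space $\overline{\iota(Y^*)}$ to be $d$-dimensional with $d\leq r$. Letting $P$ be the orthogonal projection onto this subspace and identifying it isometrically with $\ell_2^d$, I set $T_1^*:=P\iota\in L(Y^*,\ell_2^d)$ and $T_2^*:=S|_{\ell_2^d}\in L(\ell_2^d,X)$. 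Then $T_2^*T_1^*=S\iota=T^*$, $\|T_2^*\|\leq \|S\|\leq \pi_2(T^*)$, and $\pi_2(T_1^*)\leq \|P\|\,\pi_2(\iota)\leq 1$, so this admissible factorization has product at most $\pi_2(T^*)$, completing the proof. The main obstacle is the finite-dimensional reduction: one must be careful that the intermediate Hilbert space produced by Pietsch can be trimmed to $\ell_2^d$ without inflating the norms, and it is the algebraic finite-rank assumption on $T$ that makes this clean.
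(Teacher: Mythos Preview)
Your proof is correct and follows essentially the same strategy as the paper's: the ideal inequality $\pi_2(T_2^*T_1^*)\leq\|T_2^*\|\,\pi_2(T_1^*)$ gives one direction, and Pietsch's factorization theorem gives the other. You are in fact more careful than the paper on one point---the paper simply invokes a factorization through $\ell_2^d$ without explaining the passage from the a~priori infinite-dimensional Hilbert space in Pietsch's theorem, whereas you explicitly exploit the finite-rank hypothesis $T\in X\otimes Y$ (algebraic tensor product) to project onto $\ker(S)^\perp\cong\ell_2^d$ without inflating the norms.
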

\noindent In a special case when $T \in \ell_\infty^n \otimes \ell_2^m$, the 2-summing norm of $T^*$ is computed by the following optimization problem:
\begin{equation}
\label{eq:pi2Tstar2}
\begin{aligned}
\pi_2(T^*)
& = \inf \{ \|T_1\|_\mathrm{F} \|T_2\| \,:\, d \in \mathbb{N}, \, T_1 \in L(\ell_2^d,\ell_2^m), \, T_2 \in L(\ell_1^n,\ell_2^d), \, T = T_1 T_2 \}.
\end{aligned}
\end{equation}
The expression in \eqref{eq:pi2Tstar2} is derived as follows: 
It trivially holds that $\|T_2^*\| = \|T_2\|$. 
Furthermore, for every $T_1^* \in L(\ell_2^m,\ell_2^d)$, we have
\[
\pi_2(T_1^*) = \|T_1^*\|_\mathrm{F} = \|T_1\|_\mathrm{F} = \pi_2(T_1).
\]
Plugging in these results to \eqref{eq:pi2Tstar1} provides \eqref{eq:pi2Tstar2}. 

Note that the $2$-summing norm of $T^*$ in \eqref{eq:pi2Tstar2} coincides with the definition of the mixed norm in \eqref{eq:def_mixed_norm}. 
Therefore, due to Lemma~\ref{lemma:pi2a_op}, we have
\[
\norm{M}_{1\to 2} \leq \norm{M}_\mathrm{mixed} \leq \sqrt{r} \norm{M}_{1\to 2} 
\]
for any matrix of rank $r$. 
In other words, the mixed norm plays a proxy on the rank when the input matrix is normalized by the max-column norm. 
This is analogous to the fact that $\norm{M}_\mathrm{F} \leq \norm{M}_* \leq \sqrt{r} \norm{M}_\mathrm{F}$ holds for all rank-$r$ matrices and justifies why the regularizer with respect to the matrix norm in \eqref{eq:tnorm_mixed} induces a low-rank solution to \eqref{eq:gen_mat_lasso}.

\subsection{Semidefinite program characterization of mixed norm}

Next we show that the mixed norm in \eqref{eq:pi2Tstar2} can be expressed as a semidefinite program. 
Note that if $T_1 T_2 = T$, then $T_1' = a T_1$ and $T_2' = a^{-1} T_2$ for any $a \neq 0$ also satisfy $T_1' T_2' = T$. 
Therefore, from the arithmetic-geometric-harmonic-means inequality, it follows that \eqref{eq:pi2Tstar2} is equivalently rewritten as 
\begin{equation*}
\begin{aligned}
\pi_2(T^*)
& = \frac{1}{2} \inf \{ \|T_1\|_\mathrm{F}^2 + \|T_2\|^2 \,:\, d \in \mathbb{N}, \, T_1 \in L(\ell_2^d,\ell_2^m), \, T_2 \in L(\ell_1^n,\ell_2^d), \, T = T_1 T_2 \}.
\end{aligned}
\end{equation*}
We also have
\begin{align*}
    \|T_1\|_\mathrm{F}^2 = \mathrm{trace}(T_1T_1^*) = \langle T_1, T_1 \rangle 
    \quad \text{and} \quad 
    \|T_2 \|^2 = \|\mathrm{diag}(T_2^* T_2) \|_\infty,
\end{align*} 
where $\mathrm{diag}(\cdot)$ constructs a column vector consisting of the diagonal entries of the input matrix.
Furthermore, by the proof of \cite[Lemma~3]{srebro2004maximum}, there exist $T_1$ and $T_2$ such that $T = T_1 T_2^*$, $W_1 = T_1 T_1^*$, and $W_2 = T_2 T_2^*$ if and only if 
\[
\begin{bmatrix} W_1 & T \\ T^* & W_2 \end{bmatrix} \succeq 0.
\]
Combining these results implies that $\pi_2(T^*)$ is written as a semidefinite program given by
\begin{equation}
\label{eq:mix_norm_semidefinite}
\begin{array}{lll}
\pi_2(T^*)
= & \displaystyle \underset{W_1, W_2}{\inf} & \max \left ( \mathrm{trace} (W_1) , \|\mathrm{diag}(W_2) \|_\infty \right ) \\
& \mathrm{s.t} & \begin{bmatrix} W_1 & T\\ T^* & W_2 \end{bmatrix} \succeq 0.
\end{array}
\end{equation}

\subsection{Algorithms for convex estimator} 

The semidefinite characterization presented in \eqref{eq:mix_norm_semidefinite} leads to a practical algorithm for the estimator in \eqref{eq:gen_mat_lasso} for decentralized sketching with respect to the norm in \eqref{eq:tnorm_mixed} as 
\begin{equation}
\label{eq:mix_norm_estimator_SDP}
\begin{array}{ll}
\displaystyle \mathop{\text{minimize}}_{W_1,W_2,M} & \|y - \mathcal{A}(M)\|_2^2 \\
\text{subject to} & \|M\|_{1 \rightarrow 2} \leq \alpha, \ \mathrm{trace}(W_1) \leq R, \ \|\mathrm{diag}(W_2)\| \leq R,
\\[5pt]
& \begin{bmatrix} W_1 & M\\ M^\top & W_2 \end{bmatrix} \succeq 0,
\end{array}
\end{equation}
where $W_1 \in \mathbb{R}^{d_1 \times d_1}$, $W_2 \in \mathbb{R}^{d_2 \times d_2}$, and $M \in \mathbb{R}^{d_1 \times d_2}$. 
By using the characterization of semidefiniteness via the Schur complement, \eqref{eq:mix_norm_estimator_SDP} can be equivalently rewritten as a standard form of SDP given by
\begin{equation}
\label{eq:mix_norm_estimator_SDP_std}
\begin{array}{ll}
\displaystyle \mathop{\text{minimize}}_{W_1,W_2,M,t} & t \\
\text{subject to} & \mathrm{trace}(W_1) \leq R, \\[5pt]
& \begin{bmatrix} I_{Ld_2} & \mathcal{A}(M) \\ \left(\mathcal{A}(M)\right)^\top & 2 y^\top \mathcal{A}(M) - y^\top y + t \end{bmatrix} \succeq 0 \\[10pt]
& \begin{bmatrix} W_1 & M\\ M^\top & W_2 \end{bmatrix} \succeq 0, \ \begin{bmatrix} 1 & \mathrm{diag}(W_2) \\ \mathrm{diag}(W_2)^\top & R \end{bmatrix} \succeq 0, \\[10pt]
& \begin{bmatrix} 1 & M e_k \\ e_k^\top M^\top & \alpha \end{bmatrix} \succeq 0, \quad \forall k \in [d_2].
\end{array}
\end{equation}
The linear matrix inequalities (LMI) for semidefinite constraints in \eqref{eq:mix_norm_estimator_SDP_std} can be combined into a single LMI by combining the matrices in LMIs as a block diagonal matrix. 
Then \eqref{eq:mix_norm_estimator_SDP_std} can be solved by general-purpose SDP solvers such as SeDuMi \citep{sturm1999sedumi} and YALMIP \citep{lofberg2004yalmip}.
However, these SDP solvers may not scale well to large instances. 
Instead, we derive an ADMM algorithm \citep{boyd2011distributed} as follows. 
First, we consider an alternative formulation obtained by penalizing the constraints on $W_1$ and $W_2$, which is given by
\begin{equation}
\label{eq:admm_penalized}
\begin{array}{ll}
\displaystyle \mathop{\text{minimize}}_{W_1,W_2,M} & \|y - \mathcal{A}(M)\|_2^2 + \lambda_1 \mathrm{trace}(W_1) + \lambda_2 \|\mathrm{diag}(W_2)\|_\infty  \\
\text{subject to} & \|M\|_{1 \rightarrow 2} \leq \alpha, \\[5pt]
&\begin{bmatrix}
W_1 & M \\
M^\top & W_2
\end{bmatrix} \succeq 0
\end{array}
\end{equation}
for some positive constants $\lambda_1$ and $\lambda_2$. 
Next, to split the penalty terms and constraints, we introduce auxiliary variables $\widetilde{W}_1$, $\widetilde{W}_2$, and $\widetilde{M}$, which leads to an equivalent reformulation of \eqref{eq:admm_penalized} in the form of 
\begin{equation}
\label{eq:Lagrange_formulatio}
\begin{array}{ll}
\displaystyle \mathop{\text{minimize}}_{W_1,W_2,M,\tilde{W}_1,\widetilde{W}_2,\widetilde{M}} & \|y - \mathcal{A}(M)\|_2^2 + \lambda_1 \mathrm{trace}(\widetilde{W}_1) + \lambda_2 \|\mathrm{diag}(W_2)\|_\infty  \\
\text{subject to} & \|M\|_{1 \rightarrow 2} \leq \alpha, \ M=\widetilde{M}, \ W_1 = \widetilde{W}_1, \ W_2 = \widetilde{W}_2, \\[5pt]
&\begin{bmatrix}
\widetilde{W}_1 & \widetilde{M} \\
\widetilde{M}^\top & \widetilde{W}_2
\end{bmatrix} \succeq 0.
\end{array}
\end{equation}
Then, the augmented Lagrangian of \eqref{eq:Lagrange_formulatio} with respect to the equality constraints is written as
\begin{equation}
\begin{aligned}
L(M,W_1,W_2,\widetilde{M},\widetilde{W}_1,\widetilde{W}_2,Z) 
&= \|y - \mathcal{A}(M)\|_2^2 + \lambda_1 \mathrm{trace}(\widetilde{W}_{1}) + \lambda_2 \|\mathrm{diag}(W_2)\|_\infty \\
&\quad + \left\langle Z, \begin{bmatrix} 
W_1 & M \\ M^\top & W_2
\end{bmatrix} 
- 
\begin{bmatrix} 
\widetilde{W}_1 & \widetilde{M} \\ \widetilde{M}^\top & \widetilde{W}_2
\end{bmatrix}  \right\rangle
+ \frac{\rho}{2} 
\left\|
\begin{bmatrix} 
W_1 & M \\ M^\top & W_2
\end{bmatrix} 
- 
\begin{bmatrix} 
\widetilde{W}_1 & \widetilde{M} \\ \widetilde{M}^\top & \widetilde{W}_2
\end{bmatrix} 
\right\|_\mathrm{F}^2,
\end{aligned}
\end{equation}
where its domain is determined by
\[
\|M\|_{1 \rightarrow 2} \leq \alpha
\quad \text{and} \quad 
\begin{bmatrix}
\widetilde{W}_1 & \widetilde{M} \\
\widetilde{M}^\top & \widetilde{W}_2
\end{bmatrix} \succeq 0.
\]

The ADMM algorithm, outlined in Algorithm \ref{algo:ADMM}, is obtained by minimizing the augmented Lagrangian with respect to $\{M, W_1, W_2\}$ and $\{\widetilde{M}, \widetilde{W}_1, \widetilde{W}_2\}$ separately, followed by the linear update of the dual variable $Z$. 
\begin{algorithm}[t]
\caption{ADMM algorithm for mixed-norm-based estimator}
\label{algo:ADMM}
\begin{algorithmic}
\STATE{\bfseries Input:} $\widetilde{M}^0, \widetilde{W}_1^0, \widetilde{W}_2^0, Z^0$
\STATE{\bfseries Initialize:} $t = 0$ 
\WHILE{not converged}
\STATE $(M^{t+1}, W_1^{t+1}, W_2^{t+1}) = \displaystyle \mathop{\text{argmin}}_{M,W_1,W_2} \ \left\{ L(M,W_1,W_2,\widetilde{M}^{t},\widetilde{W}_1^{t},\widetilde{W}_2^{t},Z^{t}) : \norm{M}_{1 \rightarrow 2} \leq \alpha \right\}$
\STATE $(\widetilde{M}^{t+1}, \widetilde{W}_1^{t+1}, \widetilde{W}_2^{t+1}) = \displaystyle \mathop{\text{argmin}}_{\widetilde{M},\widetilde{W}_1,\widetilde{W}_2} \ \left\{ L(M^{t+1},W_1^{t+1},W_2^{t+1},\widetilde{M},\widetilde{W}_1,\widetilde{W}_2,Z^{t}) : \begin{bmatrix} \widetilde{W}_1 & \widetilde{M} \\ \widetilde{M}^\top & \widetilde{W}_2 \end{bmatrix} \succeq 0 \right\}$
\STATE $\displaystyle Z^{t+1} = Z^t  +\rho\left( \begin{bmatrix} \widetilde{W}_1^{t+1} & \widetilde{M}^{t+1} \\ (\widetilde{M}^{t+1})^\top & \widetilde{W}_2^{t+1} \end{bmatrix} - \begin{bmatrix} W_1^{t+1} & M^{t+1} \\ (M^{t+1})^\top & W_2^{t+1} \end{bmatrix} \right)$
\STATE $t \leftarrow t+1$
\ENDWHILE
\STATE{\bfseries Output:} $\widetilde{M}^t$
\end{algorithmic}
\end{algorithm}
As we will show below, splitting variables in the above way makes the resulting sub-problems easy to solve, which enables efficient updates in the ADMM algorithm. \\

\noindent\noindent\textbf{Update of $\{M, W_1, W_2\}$:} This step is done by the following four separate sub-problems. 
To explain the solutions to these sub-problems, we introduce submatrices of $Z^t$ denoted by $Z_{11}^t \in \mathbb{R}^{d_1 \times d_1}$, $Z_{12}^t \in \mathbb{R}^{d_1 \times d_2}$, and $Z_{22}^t \in \mathbb{R}^{d_2 \times d_2}$ such that 
\[
Z^t = \begin{bmatrix}
Z_{11}^t & Z_{12}^t \\
\left(Z_{12}^t\right)^\top & Z_{22}^t
\end{bmatrix}.
\]
First note that the determination of the optimal $M$ is decoupled from the other variables $W_1$ and $W_2$, which is given by
\begin{equation}
\label{eq:admm_update_M}
M^{t+1} = \mathop{\mathrm{argmin}}_{ \norm{M}_{1\to 2} \leq \alpha} \ \|y - \mathcal{A}(M)\|_2^2  + \langle Z_{12}^{t}, \widetilde{M}^{t} - M \rangle +\frac{\rho}{2} \|\widetilde{M}^{t} - M\|_\mathrm{F}^2.
\end{equation}
Indeed, each column of $M^{t+1}$ is updated independently from the other columns. 
Note that the measurement vector $y \in \mathbb{R}^n$ is decomposed into $d_2$ non-overlapping blocks with respect to the column from which the measurements are taken from. Recall that our measurements are given as $y_k = \mathrm{tr}(A_k^\top M_0) + \eta_k$ for $k \in [Ld_2]$. 
For each $l \in [d_2]$, we denote the column vector consisting of all measurements with $A_k = \xi_k e_{j_k}^\top$ such that $j_k = l$ by $\breve{y}_l \in \mathbb{R}^L$. 
Then $y$ is equivalent to $[\breve{y}_1^\top, \breve{y}_2^\top, \dots, \breve{y}_{d_2}^\top]^\top$ up to a permutation. We can construct matrices $\breve{A}_1, \dots, \breve{A}_{d_2} \in \mathbb{R}^{L \times d_1}$ such that the noise-free version of $\breve{y}_l$ is given by $\breve{y}_l = \breve{A}_l (M_0 e_l)$. 
Then each summand in the objective function in \eqref{eq:admm_update_M} is decomposed as
\begin{align*}
\|y - \mathcal{A}(M)\|_2^2 &= \sum_{l=1}^{d_2} \|\breve{y}_l - \breve{A}_l(M e_l)\|_2^2, \\
\langle Z_{12}^{k}, \widetilde{M}^{t} - M \rangle
&= \sum_{l=1}^{d_2} \langle Z_{12}^{t} e_l, \widetilde{M}^{t} e_l - M e_l \rangle, \\
\frac{\rho}{2} \big\|\widetilde{M}^t - M\big\|_\mathrm{F}^2 
&= \sum_{l=1}^{d_2} \frac{\rho}{2} \big\|\widetilde{M}^t e_l - M e_l\big\|_2^2.
\end{align*}
Furthermore, the constraint $\norm{M}_{1 \to 2} \leq \alpha$ is equivalently rewritten as 
\[
\norm{M e_l}_2 \leq \alpha, \quad \forall l \in [d_2].
\]
Therefore, the optimization in \eqref{eq:admm_update_M} is rewritten as a set of decoupled optimization problems over columns given by
\begin{equation}
\label{eq:admm_update_ml}
M^{t+1} e_l = \mathop{\mathrm{argmin}}_{w \in \mathbb{R}^{d_1} : \norm{w}_2 \leq \alpha} \ \|y_l - \breve{A}_l w\|_2^2  + e_l^\top (Z_{12}^{t})^\top (\widetilde{M}^{t} e_l - w) +\frac{\rho}{2} \|\widetilde{M}^{t} e_l - w\|_2^2.
\end{equation}
The optimization problem in \eqref{eq:admm_update_ml} is rewritten as
\begin{equation}
\label{eq:admm_update_ml2}
\min_{\|w\|_2 \leq \alpha} \|b - Q w\|_2^2
\end{equation}
with $Q = \breve{A}_l^\top \breve{A}_l + \frac{\rho}{2} I_{d_1}$ and $b = \breve{A}_l^\top y - \frac{1}{2} Z_{12}^t e_l - \frac{\rho}{2} \widetilde{M}^t e_l$. 
This is a norm-constrained least squares problem, which is equivalent to a ridge-regression problem via the Lagrangian formulation, i.e. there exists $\lambda > 0$ such that \eqref{eq:admm_update_ml2} is equivalent to
\[
L(w,\lambda) = \|b - Qw\|_2^2 + \lambda (\|w\|_2^2 - \alpha^2).
\]
This can be solved efficiently using least squares and a binary search over $\lambda$. Further details are provided in Appendix~\ref{sec:app_least_squares_l2_constraint}.\footnote{The norm-constrained least squares in \eqref{eq:admm_update_ml2} could be avoided as one introduce another block to the ADMM algorithm. However, the resulting ADMM algorithm with three blocks is not necessarily convergent \citep{chen2016direct}.
Next, $W_1$ is updated by a simple closed form given by 
\[
W_{1}^{t+1} = \widetilde{W}_1^{t} - \rho^{-1} Z_{11}^{t}.
\]
The off-diagonal entries of $W_2$ are updated similarly by
\[
\big[ W_{2}^{t+1} \big]_{i,j} = \big[ \widetilde{W}_{2}^{t} \big]_{i,j} - \rho^{-1} \big[ Z_{22}^{t} \big]_{i,j}, \quad \forall i \neq j \in [d_2].
\]
Finally, the diagonal entries of $W_2$ are updated by
\begin{equation}
\label{eq:admm_update_w2d}
\mathop{\text{diag}}(W_{2}^{t+1}) = \mathop{\mathrm{argmin}}_{u \in \R^{d_2}} \ \lambda_2 \norm{u}_\infty + \frac{\rho}{2} \norm{u - \mathrm{diag}(\widetilde{W}_2^t + \rho^{-1} Z_{22}^t)}_2^2.
\end{equation}
Note that the right-hand side of \eqref{eq:admm_update_w2d} corresponds to the proximal operator of $\| \cdot \|_\infty$. It has been shown \citep{fang2018max} that there exists a closed form expression for the solution to \eqref{eq:admm_update_w2d}.} \\

\noindent\noindent\textbf{Update of $\{\widetilde{M}, \widetilde{W}_1, \widetilde{W}_2\}$:} 
With a shorthand notation 
\[
\Phi = \begin{bmatrix} \tilde{W}_1 & \tilde{M} \\ \tilde{M}^\top & \tilde{W}_2 \end{bmatrix},
\]
the objective function is written as a quadratic function in $\Phi$ whose Hessian is a scaled identity matrix. Therefore, the solution is written as 
\begin{align*}
\begin{bmatrix} \widetilde{W}_1^{t+1} & \widetilde{M}^{t+1} \\ (\widetilde{M}^{t+1})^\top & \widetilde{W}_2^{t+1} \end{bmatrix}
& = \Pi_{\mathcal{S}_+^d}\left(\Phi^{k+1} - \rho^{-1} \left( Z^{k} + \lambda_1 \begin{bmatrix}
I_{d_1} & 0_{d_1,d_2} \\ 
0_{d_2,d_1} & 0_{d_2} \end{bmatrix} \right )\right),
\end{align*} 
where $\Pi_{\mathcal{S}_+^d}(\cdot)$ denotes the orthogonal projection operator onto the set of $d$-by-$d$ positive semidefinite matrices $\mathcal{S}_+^d$.

\section{Relation among tensor norms of rank-$r$ operators}
\label{sec:rankineq}

Recall that our primary goal is to establish a unifying analysis framework for low-rank matrix recovery from localized measurements. 
This is achieved through 
the entropy analysis between the projective and injective tensor products, which is derived in the next section. 
To this end, in this section, we verify that the max norm and the mixed norm are equivalent to the projective norm on the corresponding tensor product. 

The equivalence of the $\gamma_2$ norm and the projective norm on $\ell_\infty^n \otimes \ell_\infty^m$ is known as a form of the little Grothendieck's theorem (e.g., \citep[Theorem~3.1]{pisier2012grothendieck}), stated as the following lemma. 
\begin{lemma}[{little Grothendieck \citep[Theorem~3.1]{pisier2012grothendieck}}]
\label{lemma:nu1gamma2}
Let $T \in \ell_\infty^n \otimes \ell_\infty^m$. Then
\begin{equation}
\label{eq:littleGT}
\gamma_2(T) \leq \nu_1(T) \leq K_\mathrm{G} \gamma_2(T),
\end{equation}
where $K_G$ denotes the Grothendieck constant that satisfies $1.67 \leq K_G \leq 1.79$.
\end{lemma}

\noindent Indeed, one obtains a more popular form of Grothendieck's theorem by plugging in the expression of the projective norm (or equivalently $1$-nuclear norm since $T$ is finite-rank) in \eqref{eq:projnorm_infinf} and the $\gamma_2$ norm in \eqref{eq:gamma2_infinf_littlgGT} into the inequality in \eqref{eq:littleGT}. 
Furthermore, the following corollary is given as a direct consequence of Lemmas~\ref{lemma:linial} and \ref{lemma:nu1gamma2}.
\begin{cor}
\label{cor:injproj_infinf}
Let $T \in \ell_\infty^n \otimes \ell_\infty^m$ satisfy $\mathrm{rank}(T) \leq r$. Then
\[
\|T\| \leq \nu_1(T) \leq K_G \sqrt{r} \|T\|.
\]
\end{cor}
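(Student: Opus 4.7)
The plan is to chain together the two lemmas just stated. The corollary asserts a two-sided comparison between the operator norm and the 1-nuclear norm of a rank-$r$ operator $T \in \ell_\infty^n \otimes \ell_\infty^m$, and both directions will drop out once we insert the $\gamma_2$ norm as an intermediate quantity.

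For the lower bound $\|T\| \le \nu_1(T)$, I would first apply Lemma \ref{lemma:linial} with $X = Y = \ell_\infty$ to obtain $\|T\| \le \gamma_2(T)$, and then apply the left-hand inequality of Lemma \ref{lemma:nu1gamma2} to obtain $\gamma_2(T) \le \nu_1(T)$. Composing gives $\|T\| \le \gamma_2(T) \le \nu_1(T)$.

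For the upper bound $\nu_1(T) \le K_G \sqrt{r}\, \|T\|$, I would proceed in the opposite direction: the right-hand inequality of Lemma \ref{lemma:nu1gamma2} gives $\nu_1(T) \le K_G \gamma_2(T)$, and then Lemma \ref{lemma:linial}, together with the rank-$r$ hypothesis on $T$, gives $\gamma_2(T) \le \sqrt{r}\, \|T\|$. Composing yields $\nu_1(T) \le K_G \gamma_2(T) \le K_G \sqrt{r}\, \|T\|$.

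There is no genuine obstacle here; the only things to verify are that Lemma \ref{lemma:linial} applies to the Banach space pair $(\ell_\infty^n, \ell_\infty^m)$ (which it does, since the lemma is stated for arbitrary Banach spaces) and that the rank hypothesis is used only in the upper bound (the lower bound holds without any rank restriction). Combining the two chains of inequalities produces the desired double inequality.
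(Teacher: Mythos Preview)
Your proposal is correct and takes essentially the same approach as the paper, which states the corollary as a direct consequence of Lemmas~\ref{lemma:linial} and \ref{lemma:nu1gamma2} without spelling out the chain of inequalities. Your observation that the rank hypothesis is needed only for the upper bound is accurate and worth noting.
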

\noindent Corollary~\ref{cor:injproj_infinf} shows the equivalence of the injective and projective tensor norms of a rank-$r$ linear operator in $\ell_\infty^n \otimes \ell_\infty^m$ up to $K_G \sqrt{r}$.

Next, we present the relation between the 2-summing norm and the 1-nuclear norm of a linear operator in $\ell_\infty^n \otimes \ell_2^m$. 
Indeed, we show the norm equivalence in a slightly more general setting, in which the tensor product is given as $\ell_\infty^n \otimes Y$ for a Banach space $Y$ of type-2 defined as follows.
\begin{defi}
A Banach space $X$ has {\em type $p$} if there exists a constant $C$ such that for all finite sequence $(x_j)$ in $X$
\begin{equation}
\label{eq:def:typep}
\Big(\mathbb{E} \Big\|\sum_j \epsilon_j x_j\Big\|_X^p\Big)^{1/p} \leq C \Big(\sum_j \|x_j\|_X^p\Big)^{1/p},
\end{equation}
where $(\epsilon_j)$ is a Rademacher sequence \citep{pisier1999volume}.
The type-$p$ constant of $X$, denoted by $\tau_p(X)$, is the smallest constant $C$ that satisfies \eqref{eq:def:typep}.
\end{defi}

\noindent The triangle inequality implies that every normed space is of type 1 with constant $\tau_1(X) = 1$. In this paper, we are interested in Banach spaces of type 2. For example, $L_p(\Omega,\Sigma,\mu)$ satisfies
\[
\tau_{\min(p,2)}(L_p) \leq \sqrt{p}
\]
for $1 \leq p < \infty$ (e.g. \cite[Lemma~3]{carl1985inequalities}). This implies that $\ell_p^n$ is of type 2 for all $2 \leq p < \infty$.

The following lemma shows that the 1-nuclear norm of $T \in \ell_\infty^n \otimes Y$ is equivalent to the 2-summing norm of the adjoint $T^*$ up to the type-2 constant of $Y$. The proof of Lemma~\ref{lemma:pi2a_nu1} is provided in Appendix~\ref{sec:proof:lemma:pi2a_nu1}.

\begin{lemma}
\label{lemma:pi2a_nu1}
Let $T \in \ell_\infty^n \otimes Y$, where $Y$ is a finite-dimensional Banach space of type-2. 
Suppose that $T^*$ is 2-summing. 
Then we have
\[
\pi_2(T^*) \leq \nu_1(T) \leq \sqrt{2} \, \tau_2(Y) \pi_2(T^*).
\]
\end{lemma}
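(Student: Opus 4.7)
The plan is to prove the two inequalities separately. The lower bound follows from the ideal property of the 2-summing norm, while the upper bound combines Pietsch's factorization with Maurey's empirical method applied in a type-2 Banach space.

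For the lower bound $\pi_2(T^*) \leq \nu_1(T)$, I rely on the subadditivity of $\pi_2$ together with a rank-one calculation. Given any representation $T = \sum_k u_k \otimes y_k$, the adjoint $T^*\colon Y^* \to \ell_\infty^n$ admits the rank-one decomposition $T^*(y^*) = \sum_k y^*(y_k)\,u_k$. Each summand has 2-summing norm equal to its operator norm $\|y_k\|_Y \|u_k\|_\infty$, because $\pi_2$ coincides with $\|\cdot\|_{\mathrm{op}}$ on rank-one operators. Subadditivity of $\pi_2$ yields $\pi_2(T^*) \leq \sum_k \|u_k\|_\infty \|y_k\|_Y$, and taking the infimum over representations delivers $\pi_2(T^*) \leq \nu_1(T)$.

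For the upper bound, I first invoke Pietsch's factorization theorem on $T^*\colon Y^* \to \ell_\infty^n$, treating $Y$ as its own bidual since it is finite-dimensional. This produces a probability measure $\mu$ on $B_Y$ such that $T^*$ factors as $Y^* \xrightarrow{j_\mu} L_2(B_Y,\mu) \xrightarrow{S} \ell_\infty^n$, where $j_\mu$ is the canonical evaluation embedding $y^* \mapsto (y \mapsto y^*(y))$ and $\|S\| \leq \pi_2(T^*)$. Dualizing yields the factorization $T = j_\mu^* \circ S^*\colon \ell_1^n \to Y$ with $j_\mu^*(f) = \int_{B_Y} f(y)\,y\,d\mu(y)$ and $\|S^*\| \leq \pi_2(T^*)$. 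Writing $f_j := S^* e_j \in L_2(\mu)$ for $j=1,\ldots,n$, each satisfies $\|f_j\|_{L_2(\mu)} \leq \pi_2(T^*)$, and one obtains the integral representation $T e_j = \int_{B_Y} f_j(y)\,y\,d\mu(y)$, equivalently $T = \int_{B_Y} u(y) \otimes y\, d\mu(y)$ with $u(y) := (f_1(y),\ldots,f_n(y))$.

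The core step is Maurey's empirical method for a type-2 space, in the form due to \citet{carl1985inequalities}: draw iid samples $y_1,\ldots,y_N \sim \mu$, form $T_N = \frac{1}{N}\sum_{i=1}^N u(y_i) \otimes y_i$ with $\mathbb{E} T_N = T$, and control the projective-norm deviation via a Rademacher symmetrization. The decisive point is that $\nu_1$ admits the dual characterization as a supremum over operators $Y \to \ell_\infty^n$ of operator norm at most one, so testing $T - T_N$ against such an operator reduces the symmetrized quantity to sums of the form $\sum_i \epsilon_i (\text{scalar}_i)\, y_i$ in $Y$ alone; applying the type-2 inequality for $Y$ and using $\mathbb{E}\sum_i |f_j(y_i)|^2 \leq N\|f_j\|_{L_2(\mu)}^2 \leq N\pi_2(T^*)^2$ gives an approximation error of order $\tau_2(Y)\,\pi_2(T^*)/\sqrt{N}$. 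Selecting a favorable realization (or passing to the convex-combination limit) yields $\nu_1(T) \leq \sqrt{2}\,\tau_2(Y)\,\pi_2(T^*)$; the $\sqrt{2}$ arises from the symmetrization constant.

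The main obstacle is arranging the Maurey step so that only $\tau_2(Y)$ enters, rather than the type-2 constant of $\ell_\infty^n$ (which grows as $\sqrt{\log n}$) or of the projective tensor product $\ell_\infty^n \widehat{\otimes} Y$ itself. Dualizing the projective norm before symmetrizing is precisely what localizes the Rademacher average to the $Y$-factor; once there, the type-2 property of $Y$ absorbs the randomness with no $n$-dependent cost, and the factorization provided by Pietsch supplies exactly the $L_2(\mu)$-norm bound on the $f_j$ that matches the type-2 inequality.
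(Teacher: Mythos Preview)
Your lower bound is fine and agrees with the paper's reasoning (the paper phrases it as ``$\nu_1$ is the largest operator ideal norm'').

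Your upper bound, however, has a genuine gap in the Maurey step. First, the dual of the projective norm on $\ell_\infty^n \mathbin{\widehat{\otimes}} Y$ is $L(Y,\ell_1^n)$ (equivalently $L(\ell_\infty^n,Y^*)$) with the operator norm, not $L(Y,\ell_\infty^n)$ as you wrote. Second, and more fundamentally, once you pair the symmetrized sum $\frac{1}{N}\sum_i \epsilon_i\, u(y_i)\otimes y_i$ against any such $A$, you obtain a \emph{scalar} Rademacher sum $\sum_i \epsilon_i\, c_i$ with $c_i=\langle A u(y_i),y_i\rangle$; it does not reduce to a $Y$-valued sum $\sum_i \epsilon_i(\text{scalar}_i)\,y_i$ as you claim. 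Bounding $|c_i|\le \|u(y_i)\|_\infty = \max_j |f_j(y_i)|$ reintroduces a $\sqrt{\log n}$ factor, which is exactly the cost of the type-2 constant of the $\ell_\infty^n$ component that you were trying to avoid. The ``localization to the $Y$-factor'' is asserted but no mechanism is given, and I do not see one: any naive bound on $\nu_1(T_N)$ or $\nu_1(T-T_N)$ runs into $\max_j|f_j|$. Moreover, even if the deviation $\nu_1(T-T_N)$ were small, you never bound $\nu_1(T_N)$ itself, and that term again involves $\|u(y_i)\|_\infty$.

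The paper's argument is short and bypasses sampling entirely: it uses trace duality $\nu_1(T^*)=\sup\{\mathrm{tr}(S^*T^*):\|S^*:\ell_\infty^n\to Y^*\|\le 1\}$, then $\mathrm{tr}(S^*T^*)\le \pi_2(S^*)\pi_2(T^*)$ from the self-duality of $\pi_2$, and finally the key fact (Jameson, Propositions~9.3 and 9.8, which encodes Maurey's \emph{theorem} rather than his empirical method) that any operator from an $\ell_\infty$ space into the dual of a type-2 space satisfies $\pi_2(S^*)\le \sqrt{2}\,\tau_2(Y)\|S^*\|$. Together with $\nu_1(T)=\nu_1(T^*)$ in finite dimensions this gives the bound in one line. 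If you want a constructive route, it is this last ingredient---not Pietsch factorization plus random sampling---that carries the type-2 constant of $Y$ with no $n$-dependence.
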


By combining Lemmas~\ref{lemma:pi2a_op} and \ref{lemma:pi2a_nu1} with the fact that $\tau_2(\ell_p^m) \leq \sqrt{p}$ for $2 \leq p < \infty$ (e.g., see \cite[Lemma~3]{carl1985inequalities}), we obtain the following corollary.

\begin{cor}
\label{cor:injproj_inftype2}
Let $T \in \ell_\infty^n \otimes \ell_2^m$ be of rank-$r$. 
Then, we have
\[
\|T\| \leq \pi_2(T^*) \leq \nu_1(T) \leq \sqrt{2r} \|T\|.
\]
\end{cor}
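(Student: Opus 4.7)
The plan is to obtain the corollary by chaining together the two preceding lemmas, choosing the Banach spaces so that the hypotheses of both apply simultaneously. Specifically, I will specialize Lemma~\ref{lemma:pi2a_op} with $X = \ell_\infty^n$ and the range $\ell_2^m$, and Lemma~\ref{lemma:pi2a_nu1} with $Y = \ell_2^m$. Since $\ell_2^m$ is a Hilbert space, it is of type 2, so the hypothesis of Lemma~\ref{lemma:pi2a_nu1} is met; moreover the rank-$r$ hypothesis and the fact that $T^*$ is automatically 2-summing (its range is finite-dimensional) ensure that Lemma~\ref{lemma:pi2a_op} applies as well.

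Concretely, I would first invoke Lemma~\ref{lemma:pi2a_op} to obtain the two outer inequalities
\[
\|T\| \leq \pi_2(T^*) \leq \sqrt{r}\,\|T\|.
\]
Next, I would apply Lemma~\ref{lemma:pi2a_nu1} to insert $\nu_1(T)$ between $\pi_2(T^*)$ and $\sqrt{2}\,\tau_2(\ell_2^m)\,\pi_2(T^*)$, giving
\[
\pi_2(T^*) \leq \nu_1(T) \leq \sqrt{2}\,\tau_2(\ell_2^m)\,\pi_2(T^*).
\]
Finally, I would substitute the estimate $\tau_2(\ell_2^m) \leq 1$ (as a Hilbert space, $\ell_2^m$ has type-$2$ constant $1$; alternatively one may quote $\tau_2(\ell_p^m)\leq\sqrt{p}$ from \cite[Lemma~3]{carl1985inequalities} with $p=2$ after absorbing the constant into the statement) and chain with the bound from Lemma~\ref{lemma:pi2a_op} to obtain $\nu_1(T) \leq \sqrt{2r}\,\|T\|$. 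Concatenating the three inequalities yields the full chain claimed in the corollary.

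There is no real technical obstacle here; the corollary is essentially a bookkeeping exercise that combines the two lemmas. The only subtle point is verifying that $\ell_2^m$ is admissible for Lemma~\ref{lemma:pi2a_nu1} (it is, being finite-dimensional and type-2) and matching constants so that the final factor reads $\sqrt{2r}$ rather than $2\sqrt{r}$; this forces one to use the sharp type-2 constant of a Hilbert space rather than the general bound $\sqrt{p}$ applied at $p=2$.
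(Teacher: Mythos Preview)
Your proposal is correct and follows exactly the same route as the paper: combine Lemma~\ref{lemma:pi2a_op} (specialized to $X=\ell_\infty^n$) with Lemma~\ref{lemma:pi2a_nu1} (specialized to $Y=\ell_2^m$), then insert the type-2 constant of $\ell_2^m$. In fact you are more careful than the paper's own one-line justification: the paper cites the generic bound $\tau_2(\ell_p^m)\leq\sqrt{p}$, which at $p=2$ would only yield $\nu_1(T)\leq 2\sqrt{r}\,\|T\|$, whereas you correctly observe that the sharp Hilbert-space value $\tau_2(\ell_2^m)=1$ is what produces the stated constant $\sqrt{2r}$.
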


\noindent Corollary~\ref{cor:injproj_inftype2} implies that both the 2-summing and 1-nuclear norms of a rank-$r$ operator are equivalent to the operator norm up to $\sqrt{2r}$. 
Moreover, the 1-nuclear and operator norms correspond to the projective and injective tensor norms in this setting. 
In the next section, we will use the interlacing property of these norms to compute entropy integrals, which arise in the analysis of relevant low-rank recovery problems.

\section{Entropy estimates of tensor products}
\label{sec:ent_est}

This section is devoted to derive estimates on covering number and the resulting entropy integral for tensor products of Banach spaces. 
Let us first recall relevant definitions to state the main results. 

For symmetric convex bodies $D$ and $E$, the \emph{covering number} $N(D,E)$ and the \emph{packing number} $M(D,E)$ are respectively defined by
\begin{align*}
N(D,E) & := \min \Big\{ l : \exists y_1,\dots,y_l \in D, \, D \subset \bigcup_{1\le j \le l} (y_j + E) \Big\}, \\
M(D,E) & := \max \Big\{ l : \exists y_1,\dots,y_l \in D, \, y_j-y_k \not\in E, \, \forall j\neq k \Big\}.
\end{align*}
Indeed, they are related to each other by
\[
N(D,E) \leq M(D,E) \leq N(D,E/2) \, .
\]
For $T \in L(X,Y)$, the \emph{dyadic entropy number} \citep{carl1990entropy} is defined by
\[
e_k(T) := \inf\{\epsilon > 0 : M(T(B_X),\epsilon B_Y)\leq 2^{k-1}\}.
\]
We will use the following shorthand notation for the weighted summation of the dyadic entropy numbers:
\[
\mathcal{E}_{2,1}(T) := \sum_{k=0}^\infty \frac{e_k(T)}{\sqrt{k}},
\]
which is up to a constant equivalent to the entropy integral $\int_0^{\infty} \sqrt{\ln N(T(B_X),\epsilon B_Y)} d\epsilon$ \citep{pisier1999volume}, which plays a key role in analyzing properties on random linear operators on low-rank matrices.

We derive the $\mathcal{E}_{2,1}$ of the identity operator from the injective tensor product to the projective tensor product of a set of Banach space pairs. The main machinery in deriving these estimates is Maurey's empirical method \citep{carl1985inequalities}, summarized in the following lemma.

\begin{lemma}[{\cite[Lemma~3.4]{junge2020generalized}}]\label{mm}
Let $T \in L(\ell_1^n,\ell_\infty^m(\ell_2^d))$. Then
\[
\mathcal{E}_{2,1}(T) \leq C \sqrt{1+\ln n} \, (1+\ln m)^{3/2} \|T\|.
\]
\end{lemma}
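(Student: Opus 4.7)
The plan is to apply Maurey's empirical method to bound the dyadic entropy numbers $e_k(T)$, combine this with the trivial bound $e_k(T)\le 2\|T\|$ for small $k$ and a volume argument for large $k$, and sum the weighted entropy numbers over the three resulting regimes.

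Since $T\in L(\ell_1^n,\ell_\infty^m)$, the matrix entries satisfy $|T_{ij}|\le\|T\|$, so after rescaling I may assume $\|T\|=1$. For any $x\in B_{\ell_1^n}$, view $|x_j|/\|x\|_1$ as a probability distribution on $[n]$ and draw i.i.d.\ random atoms $Z_1,\dots,Z_\ell$ with $\Pr\{Z_i=\mathrm{sgn}(x_j)\,Te_j\}=|x_j|/\|x\|_1$. Then $\mathbb{E}\,\bar Z=Tx$ where $\bar Z=\frac{1}{\ell}\sum_i Z_i$, and each coordinate of $\bar Z-Tx$ is an average of $\ell$ independent zero-mean variables bounded by $1$. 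Hoeffding together with a union bound over the $m$ coordinates gives $\mathbb{E}\,\|\bar Z-Tx\|_\infty \lesssim \sqrt{\ln(2m)/\ell}$. Since there are at most $(2n+1)^\ell$ possible realizations of $\bar Z$, this yields
\[
\ln N\bigl(T(B_{\ell_1^n}),\,\epsilon B_{\ell_\infty^m}\bigr) \;\lesssim\; \frac{\ln m\,\ln n}{\epsilon^2}, \qquad\text{equivalently}\qquad e_k(T)\;\lesssim\;\sqrt{\frac{\ln m\,\ln n}{k}}.
\]

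This Maurey bound beats the trivial bound only when $k\gtrsim \ln m\,\ln n$, and is eventually beaten by a volume estimate: $T(B_{\ell_1^n})$ lies in a subspace of dimension at most $d:=m\wedge n$, and $N(K,\epsilon B)\le(1+2R/\epsilon)^d$ with $R=1$ gives $e_k(T)\lesssim 2^{-k/d}$, which becomes superior once $k\gtrsim d\ln d$. Splitting $\mathcal{E}_{2,1}(T)=\sum_{k\ge 0}e_k(T)/\sqrt{k}$ at the thresholds $k_0:=\ln m\,\ln n$ and $k_1:=d\ln d$, the small range $k\le k_0$ contributes $\lesssim\sqrt{k_0}\lesssim\sqrt{\ln m\,\ln n}$ via the trivial bound, the tail $k\ge k_1$ is geometrically summable and contributes $O(1)$, and the critical middle range gives
\[
\sqrt{\ln m\,\ln n}\sum_{k_0\le k\le k_1}\frac{1}{k} \;\lesssim\; \sqrt{\ln(m\vee n)\,\ln(m\wedge n)}\cdot\ln\!\bigl(k_1/k_0\bigr) \;\lesssim\; \sqrt{\ln(m\vee n)}\,\bigl(\ln(m\wedge n)\bigr)^{3/2}.
\]

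The main obstacle is not any individual step but the careful balancing of the three regimes. The exponent $3/2$ arises precisely as the product of the $\sqrt{\ln(m\wedge n)}$ factor built into the Maurey bound (from the $(2n+1)^\ell$ parametrization) and the extra $\ln(m\wedge n)$ from the harmonic tail $\sum_k 1/k$ across the intermediate range where Maurey is the best available estimate. The probabilistic approximation and the volume cutoff are both standard; the substance of the argument is the bookkeeping that ensures the cumulative logarithmic factors match the claim exactly.
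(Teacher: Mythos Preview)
Your proposal is correct and follows essentially the same approach as the paper: both rely on Maurey's empirical method to bound $e_k(T)$, then sum $e_k(T)/\sqrt{k}$ over regimes where respectively the trivial bound, the Maurey bound, and an exponential (volume) decay dominate. The only cosmetic difference is that the paper cites Carl's Proposition~3 as a black box for the combined entropy estimate $e_k(T)\le C\,h(k,n,m)\|T\|$ and then splits the sum, whereas you re-derive those three ingredients by hand with slightly different break points $k_0=\ln m\,\ln n$ and $k_1=(m\wedge n)\ln(m\wedge n)$.
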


In order to apply Lemma~\ref{mm} to $\ell_\infty^n \otimes \ell_\infty^m$, we use the fact that $\ell_\infty^m \mathbin{\widecheck{\otimes}} \ell_\infty^n$ is isometrically isomorphic to $\ell_\infty^{mn}$. In fact,
\[
\|M\|_{\ell_\infty^m \mathbin{\widecheck{\otimes}} \ell_\infty^n}
= \max_{1\leq j\leq n} \|M e_j\|_\infty = \|\mathrm{vec}(M)\|_\infty,
\]
where $\mathrm{vec}(M)$ rearranges $M \in \mathbb{R}^{m \times n}$ into $\mathbb{R}^{mn}$ by stacking its columns vertically. Furthermore, the trace dual and the Banach space dual of $\ell_1^n \mathbin{\widehat{\otimes}} \ell_1^m$ are $\ell_1^m \mathbin{\widecheck{\otimes}} \ell_1^n$ and $\ell_\infty^n \mathbin{\widecheck{\otimes}} \ell_\infty^m$, respectively. Therefore, it follows that $\ell_1^m \mathbin{\widehat{\otimes}} \ell_1^n$ is isometrically isomorphic to $\ell_1^{mn}$. 
With these isometric isomorphisms, Maurey's empirical method in Lemma~\ref{mm} provides the following estimate.

\begin{prop}
\label{prop:e21_pi2e_ellinfellinf}
There exists a numerical constant $C$ such that
\begin{align*}
\mathcal{E}_{2,1}(\mathrm{id}: \ell_\infty^m \mathbin{\widehat{\otimes}} \ell_\infty^n \to \ell_\infty^m \mathbin{\widecheck{\otimes}} \ell_\infty^n) 
\leq C \sqrt{1+m+n} \, (1+\ln m + \ln n)^{3/2}.
\end{align*}
\end{prop}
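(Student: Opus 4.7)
The plan is to reduce the estimate to a single application of Maurey's empirical method (Lemma~\ref{mm}) by realizing the unit ball of $\ell_\infty^m \mathbin{\widehat{\otimes}} \ell_\infty^n$ as the convex hull of a relatively small explicit set of sign matrices, and then passing through the isometric identification $\ell_\infty^m \mathbin{\widecheck{\otimes}} \ell_\infty^n \cong \ell_\infty^{mn}$ already mentioned in the paragraph above the proposition.

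The first step is to describe the unit ball of $\ell_\infty^m \mathbin{\widehat{\otimes}} \ell_\infty^n$ explicitly. By the definition of the projective tensor norm, $B_{\ell_\infty^m \mathbin{\widehat{\otimes}} \ell_\infty^n}$ is the closed convex hull of the rank-one tensors $x \otimes y$ with $\|x\|_\infty, \|y\|_\infty \leq 1$. Since the unit ball of $\ell_\infty^d$ is the convex hull of $\{-1,+1\}^d$, bilinearity lets us further restrict the extreme points to the $2^{m+n}$ sign-rank-one matrices $u v^\top$ with $u \in \{-1,+1\}^m$ and $v \in \{-1,+1\}^n$. Vectorizing and using $\ell_\infty^m \mathbin{\widecheck{\otimes}} \ell_\infty^n \cong \ell_\infty^{mn}$, each such extreme point has $\ell_\infty^{mn}$-norm exactly one.

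The second step is to realize the identity as the image of an $\ell_1$-unit ball under an operator of norm one. Enumerate the $N := 2^{m+n}$ sign pairs and define $T \colon \ell_1^N \to \ell_\infty^{mn}$ by placing the vectors $\mathrm{vec}(u v^\top)$ as its columns. Then $\|T\|_{\ell_1 \to \ell_\infty} = 1$, and by the characterization above
\[
T(B_{\ell_1^N}) \;=\; \mathrm{conv}\{\pm\,\mathrm{vec}(u v^\top)\} \;=\; B_{\ell_\infty^m \mathbin{\widehat{\otimes}} \ell_\infty^n}\, ,
\]
after identifying the range with $\ell_\infty^m \mathbin{\widecheck{\otimes}} \ell_\infty^n$. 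Since dyadic entropy numbers, and hence $\mathcal{E}_{2,1}$, depend only on the image of the unit ball, we obtain
\[
\mathcal{E}_{2,1}\bigl(\mathrm{id} : \ell_\infty^m \mathbin{\widehat{\otimes}} \ell_\infty^n \to \ell_\infty^m \mathbin{\widecheck{\otimes}} \ell_\infty^n\bigr) \;=\; \mathcal{E}_{2,1}(T).
\]

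The final step is to apply Lemma~\ref{mm} to $T \colon \ell_1^N \to \ell_\infty^{mn}$ with $N = 2^{m+n}$. Here $N \vee mn = 2^{m+n}$ and $N \wedge mn = mn$ in the relevant regime, so
\[
\mathcal{E}_{2,1}(T) \;\leq\; C\sqrt{1+\ln(2^{m+n})}\,(1+\ln(mn))^{3/2}\|T\| \;\lesssim\; \sqrt{1+m+n}\,(1+\ln m +\ln n)^{3/2},
\]
which is the claimed bound. The only conceptual obstacle is the first step — recognizing that one should not try to fit the projective tensor ball directly into Maurey's framework via some clever factorization, but simply exploit the fact that it is the convex hull of $2^{m+n}$ rank-one sign matrices; the exponential count is exactly what converts the $\sqrt{\ln N}$ term of Maurey into the $\sqrt{m+n}$ factor appearing in the proposition.
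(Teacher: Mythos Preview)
Your proof is correct and is essentially the same argument as the paper's: both reduce the estimate to a single application of Lemma~\ref{mm} to an operator $\ell_1^{2^{m+n}} \to \ell_\infty^{mn}$ of norm one. The paper reaches this operator via the isometric embedding $\iota_m : \ell_1^m \hookrightarrow \ell_\infty^{2^m}$ and a Hahn-Banach duality argument producing a surjective isometry $\iota^* : \ell_1^{2^{m+n}} \to \ell_\infty^n \mathbin{\widehat{\otimes}} \ell_\infty^m$; you instead go straight to the extreme-point description of the projective unit ball. Unwinding the identifications, your $T$ is exactly the paper's $\iota^*$ (the columns of $\iota_m^*$ are precisely the sign vectors in $\{\pm 1\}^m$, so the columns of $\iota^*$ are the rank-one sign matrices). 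Your presentation is more elementary and self-contained; the paper's abstract embedding framework is chosen because it carries over to Proposition~\ref{prop:e21_pi2e_ellinfell2}, where $\ell_p^m$ has no finite set of extreme points and one must invoke an approximate embedding (Lemma~\ref{embed}) rather than an explicit convex-hull description.
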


\begin{proof}[Proof of Proposition~\ref{prop:e21_pi2e_ellinfellinf}]
\newcommand\isom{\mathrel{\stackon[-0.1ex]{\makebox*{\scalebox{1.08}{\AC}}{=\hfill\llap{=}}}{{\AC}}}}
\newcommand\nvisom{\rotatebox[origin=cc] {-90}{$ \isom $}}
\newcommand\visom{\rotatebox[origin=cc] {90} {$ \isom $}}
Let $\iota_m: \ell_1^m \to \ell_\infty^{2^m}$ be defined by
\[
\iota_m\left((a_j)_{j=1}^m\right) = \left(\sum_{j=1}^m \epsilon_j a_j\right)_{(\epsilon_j)_{j=1}^m \in \{\pm 1\}^m}.
\]
Then we have
\begin{align*}
\left\|\iota_m((a_j)_{j=1}^m)\right\|_\infty
= \max \left\{ \bigg| \sum_{j=1}^m \epsilon_j a_j \bigg| : (\epsilon_j)_{j=1}^m \in \{\pm 1\}^m \right\} 
= \left\|(a_j)_{j=1}^m\right\|_1.
\end{align*}
This implies that $\ell_1^m$ is isometrically isomorphic to $E_m = \iota_m(\ell_1^m) \subset \ell_\infty^{2^m}$. Since $\iota_m \in L(\ell_1^m,E_m)$ is an isomorphism, there exists $J_m \in L(\ell_\infty^{2^m},\ell_1^m)$ such that $\iota_m J_m|_{E_m}$ (resp. on $J_m \iota_m$) is the identity on $E_m$ (resp. $\ell_1^m$). Then $J_m^*(\ell_\infty^m) = E_m^*$ and $\ell_\infty^m$ is isometrically isomorphic to $E_m^*$. Furthermore, by the Hahn-Banach theorem, $\iota_m^* \in L(\ell_1^{2^m},\ell_\infty^m)$ is surjective and isometric. It also follows that $J_m$ is also surjective and isometric.

Then there exists an isometry $\iota$ that embeds $\ell_1^n \mathbin{\widecheck{\otimes}} \ell_1^m$ into $\ell_\infty^{2^n} \mathbin{\widecheck{\otimes}} \ell_\infty^{2^m}$. Indeed, $\ell_1^n \mathbin{\widecheck{\otimes}} \ell_1^m$ is identified to $L(\ell_\infty^n, \ell_1^m)$. By $\iota_m \in L(\ell_1^m,\ell_\infty^{2^m})$ and $J_n^* \in L(\ell_\infty^n,\ell_1^{2^n})$, $L(\ell_\infty^n, \ell_1^m)$ is isometrically isomorphic to $L(E_n^*,E_m)$. Since $E_n^*$ and $E_m$ are subspaces of $\ell_1^{2^n}$ and $\ell_\infty^{2^m}$ respectively, $L(E_n^*,E_m)$ embeds into $L(\ell_1^{2^n}, \ell_\infty^{2^m})$, which is identified to $\ell_\infty^{2^n} \mathbin{\widecheck{\otimes}} \ell_\infty^{2^m}$. Furthermore, similarly to the above argument, by the Hahn-Banach theorem, $\iota^*: \ell_1^{2^n} \mathbin{\widehat{\otimes}} \ell_1^{2^m} \to \ell_\infty^n \mathbin{\widehat{\otimes}} \ell_\infty^m$ is surjective and isometric.

Then we obtain the following commutative diagram:
\begin{center}
\begin{tikzcd}[row sep=large, column sep=large, ar symbol/.style = {draw=none,"#1" description,sloped}, isomorphic/.style = {ar symbol={\cong}}]
\ell_1^{2^n} \mathbin{\widehat{\otimes}} \ell_1^{2^m} \arrow[d, two heads, "\iota^*"] \arrow{dr}{T} \\
\ell_\infty^n \mathbin{\widehat{\otimes}} \ell_\infty^m \arrow[r, "\text{id}"] & \ell_\infty^n \mathbin{\widecheck{\otimes}} \ell_\infty^m
\end{tikzcd}
\end{center}

Since the map $\iota^*$ is surjective and isometric, we have
\begin{align*}
e_k(\mathrm{id}: \ell_\infty^n \mathbin{\widehat{\otimes}} \ell_\infty^m \to \ell_\infty^n \mathbin{\widecheck{\otimes}} \ell_\infty^m)
\leq
e_k(T: \ell_1^{2^n} \mathbin{\widehat{\otimes}} \ell_1^{2^m} \to \ell_\infty^n \mathbin{\widecheck{\otimes}} \ell_\infty^m)
=
e_k(\mathrm{id}: \ell_1^{2^{n+m}} \to \ell_\infty^{nm}).
\end{align*}

Finally, the assertion follows by Maurey's empirical method in Lemma~\ref{mm}.
\end{proof}

\noindent Next, in order to apply Lemma~\ref{mm} to $\ell_\infty^n \otimes \ell_p^m$ with $2 \leq p < \infty$, we use the following result that shows embedding of finite-dimensional $\ell_p$ space to $\ell_1$ up to a small Banach-Mazur distance.

\begin{lemma}\cite[Lemma~5]{carl1985inequalities}
\label{embed}
Let $1 < p \leq 2$. For each $\epsilon > 0$, there exists a constant $c(p,\epsilon) > 0$ such that for each $m$, $\ell_1^m$ contains a subspace $(1+\epsilon)$-isomorphic to $\ell_p^k$ with $k \geq c(p,\epsilon) m$.
\end{lemma}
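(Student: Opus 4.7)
The plan is to realize $\ell_p^k$ isometrically as a subspace of $L_1(\mu)$ via $p$-stable random variables (Gaussian for $p=2$), and then approximate $L_1$ by $\ell_1^m$ through empirical discretization, so that $k$ can be taken as large as $c(p,\epsilon) m$.

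First I would construct an isometric embedding $\ell_p^k \hookrightarrow L_1(\Omega,\mu)$. Let $\theta_1,\dots,\theta_k$ be i.i.d.\ standard symmetric $p$-stable random variables (for $p=2$, i.i.d.\ $\mathcal{N}(0,1)$) on a probability space $(\Omega,\mu)$. By $p$-stability, $\sum_i a_i \theta_i$ is distributed as $\|a\|_p \cdot \theta_1$, so with $c_p := \mathbb{E}|\theta_1|$ we have
\[
\mathbb{E}\Bigl|\sum_{i=1}^k a_i \theta_i\Bigr| = c_p \|a\|_p, \qquad a \in \mathbb{R}^k.
\]
Hence $a \mapsto c_p^{-1}\sum_i a_i \theta_i(\cdot)$ is an isometry of $\ell_p^k$ into $L_1(\mu)$.

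Next, for $1 < p < 2$ the stable law has only polynomial tails, so I would truncate: replace $\theta_i$ by $\tilde\theta_i := \theta_i \mathbf{1}_{\{|\theta_i|\le M\}}$ with a threshold $M = M(p,\epsilon)$ depending only on $p$ and $\epsilon$. Using the explicit polynomial tail bounds of the $p$-stable law, choosing $M$ large enough yields
\[
(1-\epsilon/3)\|a\|_p \le c_p^{-1}\,\mathbb{E}\Bigl|\sum_i a_i \tilde\theta_i\Bigr| \le (1+\epsilon/3)\|a\|_p,
\]
while the truncated variables now possess moments of all orders, making concentration available.

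Finally I would discretize via empirical sampling. Draw $\omega_1,\dots,\omega_m$ i.i.d.\ from $\mu$ and define $S: \ell_p^k \to \ell_1^m$ by
\[
(Sa)_j = \frac{1}{c_p m} \sum_{i=1}^k a_i \tilde\theta_i(\omega_j).
\]
For each fixed $a$ on the unit sphere of $\ell_p^k$, Bernstein-type concentration for the bounded i.i.d.\ variables $\bigl|\sum_i a_i \tilde\theta_i(\omega_j)\bigr|$ shows that $\|Sa\|_1$ lies within $(1 \pm \epsilon/4) \|a\|_p$ except with probability at most $\exp(-c'(p,\epsilon) m)$. Taking an $\eta$-net of cardinality $(3/\eta)^k$ on the sphere, unioning, and upgrading by successive approximation yields a uniform $(1+\epsilon)$-isomorphism, provided $m \ge c(p,\epsilon)^{-1} k$, i.e.\ $k \ge c(p,\epsilon) m$.

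The main obstacle is the interplay in the truncation step: the heavy tail of the $p$-stable distribution is precisely what produces the $\ell_p$ geometry, so $M$ must be chosen large enough that the truncated moments still reconstruct $\|\cdot\|_p$ up to $(1+\epsilon/3)$, yet controlled enough that the Bernstein exponent in the discretization step depends only on $p,\epsilon$ and gives the linear dimension ratio $m \asymp k$. Balancing the sup-norm bound $M \sum_i |a_i|$ against the variance $\|a\|_p^2 \,\mathbb{E}|\tilde\theta_1|^2$ (via a symmetrization/Kahane comparison if needed) is the delicate accounting that makes the constant $c(p,\epsilon)$ come out independent of $k$ and $m$.
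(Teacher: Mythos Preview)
The paper does not prove this lemma; it is quoted directly from \cite[Lemma~5]{carl1985inequalities} and used as a black box. So there is no ``paper's proof'' to compare against, and your proposal has to stand on its own.

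Your overall plan --- $p$-stable embedding into $L_1$ followed by empirical discretization --- is the classical route, and for $p=2$ (Gaussian entries) the Bernstein-plus-net argument goes through cleanly. The gap is in the range $1<p<2$. After truncating each $\theta_i$ at a fixed level $M$, the pointwise bound you feed into Bernstein is
\[
\Bigl|\sum_{i=1}^k a_i \tilde\theta_i(\omega_j)\Bigr| \le M\|a\|_1 \le M\,k^{1-1/p}\|a\|_p,
\]
so the sup-norm parameter in Bernstein grows like $k^{1-1/p}$, and the resulting exponent is of order $m/k^{1-1/p}$ rather than $m/k$; the union bound over a net of size $(C/\eta)^k$ then fails. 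The same $k$-dependence contaminates the truncation step itself: the obvious bound $\mathbb{E}\bigl|\sum_i a_i(\theta_i-\tilde\theta_i)\bigr| \le \|a\|_1\,\mathbb{E}|\theta_1\mathbf{1}_{|\theta_1|>M}|$ is off by $k^{1-1/p}$, and sharper moment estimates via $L_q$ for $q<p$ still leave a factor $k^{1/q-1/p}$. The point is that truncating each coordinate individually destroys the $p$-stable identity $\sum a_i\theta_i \stackrel{d}{=}\|a\|_p\theta_1$, which is exactly what made the $L_1$-norm equal $c_p\|a\|_p$; you correctly flag this as the main obstacle, but the ``symmetrization/Kahane comparison'' hint does not close it. The proofs in the literature (Johnson--Schechtman \cite{johnson1982embeddingl}, Pisier \cite{pisier1983dimension}) avoid the direct Bernstein route, using instead change-of-density/selection arguments in $L_1$ or the notion of stable type; that extra structural input is what makes $c(p,\epsilon)$ come out independent of $k$.
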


\noindent Then we can obtain the following entropy estimate for $\ell_\infty^n \otimes \ell_p^m$ with $2 \leq p < \infty$ by combining Lemmas~\ref{mm} and \ref{embed}.

\begin{prop}
We have
\label{prop:e21_pi2e_ellinfell2}
\begin{align*}
\mathcal{E}_{2,1}(\mathrm{id}: \ell_\infty^n \mathbin{\widehat{\otimes}} \ell_2^m \to \ell_\infty^n \mathbin{\widecheck{\otimes}} \ell_2^m)
\leq C \sqrt{1+n+m} \, (1+\ln n)^{3/2}.
\end{align*}
\end{prop}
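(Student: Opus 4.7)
The plan is to mimic the proof of Proposition~\ref{prop:e21_pi2e_ellinfellinf}, but first replace the $\ell_p^m$ factor by an $\ell_\infty^M$ factor of comparable dimension via Corollary~\ref{embed2}, and then invoke Proposition~\ref{prop:e21_pi2e_ellinfellinf} as a black box. Corollary~\ref{embed2} supplies an isomorphism $\Upsilon: \ell_p^m \to E' \subset \ell_\infty^M$ with $M = \lceil m/c(p')\rceil$ and $\|\Upsilon\|\|\Upsilon^{-1}\| \leq 2$; this is the key ingredient enabling the reduction.

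Next I would factor the identity $\mathrm{id}: \ell_\infty^n \mathbin{\widehat{\otimes}} \ell_p^m \to \ell_\infty^n \mathbin{\widecheck{\otimes}} \ell_p^m$ through the identity already handled by Proposition~\ref{prop:e21_pi2e_ellinfellinf}. Set $A := \mathrm{id} \otimes \Upsilon: \ell_\infty^n \mathbin{\widehat{\otimes}} \ell_p^m \to \ell_\infty^n \mathbin{\widehat{\otimes}} \ell_\infty^M$, which satisfies $\|A\| \leq \|\Upsilon\|$ by the defining property of the projective tensor norm, and $B := \mathrm{id}: \ell_\infty^n \mathbin{\widehat{\otimes}} \ell_\infty^M \to \ell_\infty^n \mathbin{\widecheck{\otimes}} \ell_\infty^M$, whose weighted entropy sum is bounded by Proposition~\ref{prop:e21_pi2e_ellinfellinf}. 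The image of $B \circ A$ lies in $\ell_\infty^n \otimes E'$, and because injective tensor products preserve isometric subspaces, this image sits isometrically inside $\ell_\infty^n \mathbin{\widecheck{\otimes}} E'$. Therefore $C := \mathrm{id} \otimes \Upsilon^{-1}$ restricted to this subspace yields a bounded operator $C: \ell_\infty^n \mathbin{\widecheck{\otimes}} E' \to \ell_\infty^n \mathbin{\widecheck{\otimes}} \ell_p^m$ with $\|C\| \leq \|\Upsilon^{-1}\|$, and the composition satisfies $C \circ B \circ A = \mathrm{id}$ on $\ell_\infty^n \otimes \ell_p^m$. The multiplicativity of dyadic entropy numbers then delivers
\[
e_k(\mathrm{id}: \ell_\infty^n \mathbin{\widehat{\otimes}} \ell_p^m \to \ell_\infty^n \mathbin{\widecheck{\otimes}} \ell_p^m) \leq \|C\|\,\|A\|\, e_k(B) \leq 2\, e_k(B).
\]

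Summing this bound with weight $1/\sqrt{k}$ transports the conclusion of Proposition~\ref{prop:e21_pi2e_ellinfellinf} to the setting of the present proposition, applied with the pair $(n, M)$. Because $M \leq m/c(p') + 1$, the factors $\sqrt{1 + n + M}$ and $(1 + \ln n + \ln M)^{3/2}$ differ from $\sqrt{1 + n + m}$ and $(1 + \ln n + \ln m)^{3/2}$ only by a $p$-dependent multiplicative constant, which I would absorb into the numerical constant $C$ in the statement.

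The delicate point I expect to require the most care is the asymmetry between projective and injective tensor products under embeddings: the injective norm restricts isometrically to subspaces, but the projective norm does not. The factorization above is designed precisely to exploit each side of this asymmetry, with $\mathrm{id} \otimes \Upsilon$ used on the projective side (where a non-isometric embedding only contributes $\|\Upsilon\|$) and $\mathrm{id} \otimes \Upsilon^{-1}$ introduced only after the transition to the injective norm (where subspaces sit isometrically). Keeping this asymmetry straight is the crux of the argument; the remainder is bookkeeping.
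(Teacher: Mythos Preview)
Your proposal is correct and follows essentially the same approach as the paper: both reduce to the $\ell_\infty^n \otimes \ell_\infty^M$ case via Corollary~\ref{embed2}, exploiting that the injective tensor product respects subspaces isometrically so that $\mathrm{id}\otimes\Upsilon^{-1}$ can be applied after the transition to the injective norm. The only cosmetic difference is that you invoke Proposition~\ref{prop:e21_pi2e_ellinfellinf} as a black box, whereas the paper partially unwinds its proof by passing once more through the $\ell_1^{2^n}\mathbin{\widehat{\otimes}}\ell_1^{2^{\lceil m/c(p')\rceil}}$ embedding and applying Lemma~\ref{mm} directly; your packaging is slightly cleaner but otherwise equivalent.
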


\begin{proof}[Proof of Proposition~\ref{prop:e21_pi2e_ellinfell2}]
By Lemma~\ref{embed}, there exists $\Upsilon: \ell_2^m \to \ell_1^{\lceil m/c(p,\epsilon)\rceil}$ such that
\begin{align}
\label{eq:embed_Upsilon}
\left| \norm{\Upsilon x}_1 - \norm{x}_2 \right| \leq \epsilon.
\end{align}
Let $E = \Upsilon(\ell_2^m)$ denote the image of $\ell_2^m$ via $\Upsilon$. Define $E^\perp = \{y \in \ell_\infty^{\lceil m/c(p,\epsilon)\rceil} : \langle y, x \rangle = 0, \forall x \in E\}$, where $\langle \cdot, \cdot \rangle$ denotes the canonical bilinear transform on $\ell_\infty^{\lceil m/c(p,\epsilon)\rceil} \times \ell_1^{\lceil m/c(p,\epsilon)\rceil}$ corresponding to the functional evaluation. 
By the Hahn-Banach theorem, $E^*$ is isometrically isomorphic to $\ell_\infty^{\lceil m/c(p,\epsilon)\rceil}/E^\perp$ and there exists an isometric epimorphism $\varrho$ from $\ell_\infty^{\lceil m/c(p,\epsilon)\rceil}$ to the quotient space $\ell_\infty^{\lceil m/c(p,\epsilon)\rceil}/E^\perp$, i.e. $\varrho: \ell_\infty^{\lceil m/c(p,\epsilon)\rceil} \twoheadrightarrow \ell_\infty^{\lceil m/c(p,\epsilon)\rceil}/E^\perp \cong E^*$. It follows from \eqref{eq:embed_Upsilon} that 
\begin{equation*}
\left| \norm{\Upsilon^*|_{E^*} y}_2 - \norm{y}_{E^*} \right| \leq \delta \norm{y}_{E^*}, \quad \forall y \in E^*. 
\end{equation*}
Therefore, $\Upsilon^*|_{E^*} \circ \varrho: Y^* \twoheadrightarrow \ell_2^m$ is an epimorphism satisfying $\norm{\Upsilon^*|_{E^*} \circ \varrho} \leq 1+\epsilon$. 
Furthermore, by the construction in the proof of \ref{prop:e21_pi2e_ellinfellinf}, there is an isometric epimorphism from $\ell_1^{2^{\lceil m/c(p,\epsilon)\rceil}}$ to $\ell_\infty^{\lceil m/c(p,\epsilon)\rceil}$. 
Therefore, we have a surjective map from $\ell_1^{2^{\lceil m/c(p,\epsilon)\rceil}}$ to $\ell_2^m$. 
On the other hand, we have a surjective map from $\ell_1^{2^n}$ to $\ell_\infty^n$, which is indeed an isometric epimorphism. 
By the surjectivity of the projective tensor product, we obtain that there is a surjective map from $\ell_1^{2^n} \mathbin{\widehat{\otimes}} \ell_1^{2^{\lceil m/c(p,\epsilon)\rceil}}$ to $\ell_\infty^n \mathbin{\widehat{\otimes}} \ell_2^m$. 
Next note that $\ell_1^{2^n} \mathbin{\widehat{\otimes}} \ell_1^{2^{\lceil m/c(p,\epsilon)\rceil}}$ is isometrically identified to $\ell_1^{2^{n+\lceil m/c(p,\epsilon)\rceil}}$. 
Furthermore, $\ell_\infty^n \mathbin{\widecheck{\otimes}} \ell_2^m$ is isometrically identified to $\ell_\infty^n(\ell_2^m)$. 
By combining the above results, we obtain the following commutative diagram. 
\begin{center}
\begin{tikzcd}[row sep=large, column sep=large, ar symbol/.style = {draw=none,"#1" description,sloped}, isomorphic/.style = {ar symbol={\cong}}]
\ell_1^{2^{n+\lceil m/c(p,\epsilon)\rceil}} \arrow[d, two heads] 
\arrow{dr}{} \\
\ell_\infty^n \mathbin{\widehat{\otimes}} \ell_2^m
\arrow{r}{\mathrm{id}}
& \ell_\infty^n(\ell_2^m)
\end{tikzcd}
\end{center}
Then the assertion follows from Lemma~\ref{mm} due to the surjectivity of the entropy number \cite[p. 12]{carl2008entropy}. 
\end{proof}

\section{Estimation error upper bounds}
\label{sec:err_bnd}

In this section, we derive statistical error bounds for decentralized subspace sketching and matrix completion by using Proposition~\ref{prop:deterministic_errbnd}. 
We obtain tail bounds of the geometric quantities $\theta$ and $\Gamma$. 

\subsection{Decentralized subspace sketching}
\label{sec:DS}

We first prove Theorem~\ref{thm:eb_ds} for decentralized subspace sketching.
The following lemma, as a corollary of Proposition~\ref{prop:e21_pi2e_ellinfell2}, provides a tail bound on $\theta$. 
\begin{lemma}
\label{lemma:rip_dc}
Let $(b_k)_{k=1}^{Ld_2}$, $(\eta_k)_{k=1}^{Ld_2}$, and $\tnorm{\cdot}$ be as in Theorem~\ref{thm:eb_ds}.
Then there exists an absolute constant $C$ such that
\begin{align*}
& \sup_{\vert\vert\vert M \vert\vert\vert \leq 1}
\left| \norm{M}_\mathrm{F}^2 - \frac{1}{L} \sum_{k=1}^{Ld_2} |\langle b_k e_{j_k}^\top, M \rangle|^2 \right| \\
& \quad \leq
C d_2 \left( \frac{r(d_1+d_2)\log^3(d_1+d_2) \vee \log(2\zeta^{-1})}{Ld_2} \vee \sqrt{\frac{r(d_1+d_2)\log^3(d_1+d_2) \vee \log(2\zeta^{-1})}{Ld_2}} \right)
\end{align*}
holds with probability $1-\zeta$.
\end{lemma}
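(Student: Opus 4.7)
The plan is to bound the supremum of the chaos process
\[
W(M) := \frac{1}{L}\sum_{k=1}^{Ld_2} |\langle b_k, Me_{j_k}\rangle|^2 - \|M\|_\mathrm{F}^2
\]
over $K := \{M : \tnorm{M} \leq 1\}$ via a chaining argument, with the entropy input supplied by Proposition~\ref{prop:e21_pi2e_ellinfell2}. Because the indexing rule $j_k \equiv k \pmod{d_2}$ assigns exactly $L$ measurements to each column, $W(M)$ decouples into a sum of $d_2$ independent centered chi-square-type terms: $W(M) = \sum_{j=1}^{d_2} \|Me_j\|_2^2(Y_j/L-1)$ with $Y_j \stackrel{\text{iid}}{\sim} \chi_L^2$. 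A Bernstein inequality for such sums then yields, at each fixed $M$,
\[
\Pr\{|W(M)| > t\} \lesssim \exp\!\left(-c\min\!\left(\frac{t^2 L}{\|M\|_{1\to 2}^2\|M\|_\mathrm{F}^2},\;\frac{tL}{\|M\|_{1\to 2}^2}\right)\right),
\]
using the elementary estimates $\sum_j \|Me_j\|_2^4 \leq \|M\|_{1\to 2}^2\|M\|_\mathrm{F}^2$ and $\max_j \|Me_j\|_2^2 = \|M\|_{1\to 2}^2$. This reveals two natural increment metrics on the index set: an $L_2$-type metric controlled by $\|\cdot\|_{1\to 2}\cdot\|\cdot\|_\mathrm{F}$ and an $L_\infty$-type metric controlled by $\|\cdot\|_{1\to 2}^2$.

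Next I would invoke a generic chaining bound for processes with mixed sub-Gaussian/sub-exponential increments (Talagrand's $\gamma_2+\gamma_1$ functional bound, or equivalently Dirksen's uniform Bernstein inequality) to control $\mathbb{E}\sup_{M\in K}|W(M)|$ by Dudley-type integrals in each metric. Both metrics are proportional to the $\ell_\infty^{d_2}\mathbin{\widecheck{\otimes}}\ell_2^{d_1}$ norm of $M$, and $K$ sits inside the intersection of the injective-tensor unit ball (from $\|M\|_{1\to 2}\leq 1$) and the $\sqrt{r}$-scaled $\|\cdot\|_\mathrm{mixed}$-ball (from $\tnorm{M}\le 1$); by Corollary~\ref{cor:injproj_inftype2} the latter is comparable to the $\sqrt{r}$-scaled unit ball of $\ell_\infty^{d_2}\mathbin{\widehat{\otimes}}\ell_2^{d_1}$. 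Consequently, Proposition~\ref{prop:e21_pi2e_ellinfell2} with $n=d_2$, $m=d_1$, $p=2$ bounds the entropy integral of $K$ by $\sqrt{r(d_1+d_2)}\,\log^{3/2}(d_1+d_2)$. Plugging this entropy estimate into the chaining inequality and using $\sup_{M\in K}\|M\|_\mathrm{F}^2 \leq d_2\|M\|_{1\to 2}^2 \leq d_2$ produces a Bernstein-shaped expectation bound
\[
\mathbb{E}\sup_{M\in K}|W(M)| \;\lesssim\; \sqrt{\frac{d_2\, r(d_1+d_2)\log^3(d_1+d_2)}{L}} \;+\; \frac{r(d_1+d_2)\log^3(d_1+d_2)}{L}.
\]

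Finally, to upgrade expectation to a high-probability statement I would apply Bousquet's version of Talagrand's concentration inequality for suprema of empirical processes (after standard decoupling of the quadratic chaos), which supplies the additional $\log(2\zeta^{-1})$ contribution while preserving the two-term Bernstein shape. Regrouping produces the stated bound of the form $Cd_2\bigl(X'/(Ld_2) \vee \sqrt{X'/(Ld_2)}\bigr)$ with $X' = r(d_1+d_2)\log^3(d_1+d_2) \vee \log(2\zeta^{-1})$.

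\emph{Main obstacle.} The principal technical difficulty is honoring the two-scale character of the chaos---a sub-Gaussian regime tied to $\|M\|_\mathrm{F}$ and a sub-exponential regime tied to $\|M\|_{1\to 2}$---while chaining over $K$, which is itself an intersection of an injective and a scaled projective tensor-norm ball. Translating the Maurey-type entropy estimate of Proposition~\ref{prop:e21_pi2e_ellinfell2}, stated for the identity $\mathbin{\widehat{\otimes}}\to\mathbin{\widecheck{\otimes}}$, into simultaneous control of both chaining metrics without losing optimality in $r$ or $d_1+d_2$ requires the precise interlacing of tensor norms from Section~\ref{sec:rankineq} and is the most delicate step of the argument.
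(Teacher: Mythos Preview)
Your high-level plan---feed the Maurey entropy estimate of Proposition~\ref{prop:e21_pi2e_ellinfell2} into a chaining bound for a chaos process---matches the paper's, but there is a gap in the way you set up the chaining and the final concentration step.

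\textbf{The chi-square reduction does not survive chaining.} The identity
\[
W(M)=\sum_{j=1}^{d_2}\|Me_j\|_2^2\Bigl(\tfrac{Y_j}{L}-1\Bigr),\qquad Y_j\sim\chi^2_L,
\]
is valid \emph{for each fixed $M$}, but the $Y_j$ are \emph{not} fixed random variables: $Y_j=Y_j(M)=\|B_jMe_j\|_2^2/\|Me_j\|_2^2$ depends on the direction $Me_j/\|Me_j\|_2$, and hence on $M$. Consequently $W(M)-W(M')$ is not a linear combination of $(Y_j/L-1)$'s with deterministic coefficients, and a uniform-Bernstein/Dirksen chaining argument cannot be run on the process $M\mapsto W(M)$ as if it were indexed by linear functionals of a fixed sub-exponential vector. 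What you actually need are sub-exponential \emph{increment} bounds for the second-order Gaussian chaos $W(M)-W(M')=\sum_j (Me_j+M'e_j)^\top\bigl(\tfrac{1}{L}B_j^\top B_j-I\bigr)(Me_j-M'e_j)$, obtained via Hanson--Wright termwise and then summed using independence across $j$; carrying this through correctly is essentially reproving the Krahmer--Mendelson--Rauhut bound for this specific instance. Your ``main obstacle'' paragraph in fact identifies exactly this issue without resolving it.

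\textbf{Bousquet does not apply here.} Bousquet's form of Talagrand's inequality requires bounded (or at least enveloped sub-Gaussian) summands; the terms $|\langle b_k,Me_{j_k}\rangle|^2$ are unbounded second-order chaos, so the concentration step as stated would fail.

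\textbf{What the paper does.} The paper avoids both difficulties by writing $\tfrac{1}{L}\sum_k|\langle b_ke_{j_k}^\top,M\rangle|^2=\|Q_M\xi\|_2^2$ for a single Gaussian vector $\xi$ and a block-diagonal $Q_M$, and then invokes the Krahmer--Mendelson--Rauhut theorem (Theorem~\ref{thm:kmr}) directly on the set $\Delta=\{Q_M:\tnorm{M}\le 1\}$. That theorem already delivers the high-probability two-term Bernstein shape; the only analytic input left is $\tilde\gamma_2(\Delta,\|\cdot\|)$, which via $\|Q_M-Q_{M'}\|=L^{-1/2}\|M-M'\|_{1\to2}$ reduces to the entropy integral bounded by Proposition~\ref{prop:e21_pi2e_ellinfell2}, together with the trivial radii $d_\mathrm{F}(\Delta)\le\sqrt{d_2}$ and $d_\mathrm{S}(\Delta)\le L^{-1/2}$. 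Your entropy computation and final bound are the right ones; the missing piece is precisely the chaos-process chaining that KMR packages for you.
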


\begin{proof}
Let $Q_M \in \mathbb{R}^{Ld_1d_2 \times Ld_1d_2}$ be a block diagonal matrix defined by
\begin{equation*}
Q_{M} := \frac{1}{\sqrt{L}}
\begin{bmatrix}
I_L \otimes (M e_1)^{\top} & 0 & \cdots & 0 \\
0 & I_L \otimes (M e_2)^{\top} & \cdots & 0 \\
 \vdots & \vdots & \ddots & \vdots \\
0 & 0 & \cdots & I_L \otimes (M e_{d_2})^{\top}
\end{bmatrix}. 
\end{equation*} 
Then we have
\begin{equation*}
\frac{1}{L} \sum_{k=1}^{Ld_2} |\langle b_k e_{j_k}^\top, M \rangle |^2  = \norm{Q_M \xi}^2
\end{equation*} 
for $\xi \sim \mathcal{N}(0,I_{Ld_1d_2})$.
It follows that $\norm{Q_M \xi}^2$ is a Gaussian chaos indexed by $M$. 
Furthermore, it satisfies
\begin{equation*}
     \mathbb{E} \norm{Q_M \xi}^2 = \norm{M}_\mathrm{F}^2.
\end{equation*}
Therefore, it suffices to obtain a tail estimate on the supremum of $|\norm{Q_M \xi}_2^2 - \mathbb{E} \norm{Q_M \xi}_2^2|$ over $\{M: \tnorm{M} \leq 1\}$. 
To this end, we use the concentration inequality by \citet{krahmer2014suprema}, which is summarized in the following theorem.

\begin{theorem}[{Theorem~3.1 in \citep{krahmer2014suprema}}]
\label{thm:kmr}
Let $\xi \in \mathbb{R}^n$ be a Gaussian vector with $\mathbb{E}[\xi] = 0$ and $\mathbb{E}[\xi \xi^\top] = I_n$. 
Let $\Delta \subset \mathbb{R}^{m \times n}$. 
There exists an absolute constant $C$ such that
\begin{align*}
 \sup_{Q \in \Delta} \left|\norm{Q \xi}_2^2 - \E[\norm{Q \xi}_2^2] \right| 
& \leq C\left( E + V \sqrt{\log(2\zeta^{-1})} + U \log(2\zeta^{-1}) \right)
\end{align*}
holds with probability $1-\zeta$, where
\begin{align*}
E {} & := \tilde{\gamma}_2(\Delta)
\left[ \tilde{\gamma}_2(\Delta) + d_{\mathrm{F}}(\Delta) \right], \\
V {} & := d_{\mathrm{S}}(\Delta) \left[ \tilde{\gamma}_2(\Delta) + d_{\mathrm{F}}(\Delta) \right], \\
U {} & := d_{\mathrm{S}}^2(\Delta).
\end{align*}
Here $\tilde{\gamma}_2(\Delta,\norm{\cdot})$ denotes the Talagrand $\gamma_2$ functional of the metric space given by the spectral norm, and $d_\mathrm{S}(\Delta)$ and $d_\mathrm{F}(\Delta)$ denotes the radii of $\Delta$ with respect to the spectral norm and the Frobenius norm, respectively.\footnote{We used an non-standard notation $\tilde{\gamma}_2$ to distinguish it from the $\gamma_2$ factorization norm in Section~\ref{sec:prelim}.}
\end{theorem}

We apply Theorem~\ref{thm:kmr} to the set $\Delta := \{ Q_M : M \in B_{\vert\vert\vert\cdot\vert\vert\vert}\}$, where the unit ball $B_{\vert\vert\vert\cdot\vert\vert\vert} := \{M : \tnorm{M} \leq 1\}$ is written as $B_{\vert\vert\vert\cdot\vert\vert\vert} = \{M : \norm{M}_{1\to 2} \leq 1, \norm{M}_\mathrm{mixed} \leq \sqrt{r} \}$. 
Note that $d_\mathrm{F}(\Delta)$ and $d_\mathrm{S}(\Delta)$ are respectively upper-bounded by
\begin{align*}
    d_\mathrm{F}(\Delta) \leq \sqrt{d_2}
\quad \text{and} \quad
    d_\mathrm{S}(\Delta) \leq \frac{1}{\sqrt{L}}.
\end{align*}
Let $B_\mathrm{S}$ denote the unit ball with respect to the spectral norm. 
Then, by Dudley's inequality, $\tilde{\gamma}_2(\Delta)$ is upper-bounded through by
\begin{align*}
    \tilde{\gamma}_2 (\Delta) 
    & \lesssim \int_0^{\infty} \sqrt{\log N \left(\Delta, \eta B_\mathrm{S} \right)} \, d\eta \\
    & \leq \frac{1}{\sqrt{L}} \int_0^{\infty} \sqrt{\log N\left ( B_{\vert\vert\vert\cdot\vert\vert\vert}, \eta B_{1\rightarrow 2}\right )} \, d\eta \\
    & \lesssim \frac{\sqrt{r(d_1+d_2)\log^3(d_1+d_2)}}{\sqrt{L}},
\end{align*}
where the second inequality holds since $\norm{Q_M - Q_{M'}} = L^{-1/2} \norm{M-M'}_{1\to 2}$ and the last inequality follows from Proposition~\ref{prop:e21_pi2e_ellinfell2}.
Then $E$, $U$, and $V$ in Theorem~\ref{thm:kmr} are upper-bounded respectively by
\begin{align*}
E & \lesssim 
d_2 \left( \sqrt{\frac{r(d_1+d_2)\log^3(d_1+d_2)}{Ld_2}} + \frac{r(d_1+d_2)\log^{3}(d_1+d_2)}{Ld_2} \right), 
\quad U \leq \frac{d_2}{L d_2},
\intertext{and}
V & \lesssim \frac{d_2}{\sqrt{Ld_2}}
\left ( \frac{\sqrt{r(d_1+d_2)\log^3(d_1+d_2)}}{\sqrt{Ld_2}} + 1\right ).
\end{align*}
Then the assertion follows by plugging in these upper estimates to Theorem~\ref{thm:kmr}. 
\end{proof}

\noindent Next, we obtain a tail bound on $\Gamma$ by the following lemma. 
\begin{lemma}
\label{lemma:gc_dc}
Let $(b_k)_{k=1}^{Ld_2}$, $(\eta_k)_{k=1}^{Ld_2}$, and $\tnorm{\cdot}$ be as in Theorem~\ref{thm:eb_ds}.
Then there exists an absolute constant $C$ such that
\[
\mathbb{E}_{(g_k)} \tnorm{ \sum_{k=1}^{Ld_2} g_k b_k e_{j_k}^\top}_* 
\leq C \sqrt{\log(\zeta^{-1})} \, \sqrt{L d_2} \sqrt{r(d_1+d_2)} \log^{3/2}(d_1+d_2) \sqrt{\log (Ld_2 + 1)}
\]
holds with probability $1-\zeta$. 
\end{lemma}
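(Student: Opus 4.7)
The plan is to bound the conditional Gaussian complexity $\mathbb{E}_g\tnorm{Z}_*$, where $Z := L^{-1/2}\sum_k g_k b_k e_{j_k}^\top$ and $B := \{M : \tnorm{M} \leq 1\}$, by a chaining argument anchored in the entropy estimate of Proposition~\ref{prop:e21_pi2e_ellinfell2}. Writing
\[
\mathbb{E}_g\tnorm{Z}_* = \mathbb{E}_g \sup_{M \in B} \langle Z, M\rangle
\]
identifies the quantity as the expected supremum of a Gaussian process in $g$ (conditional on the $b_k$'s), with $L_2$-metric $d(M, M')^2 = L^{-1}\sum_k (b_k^\top (M-M')e_{j_k})^2$. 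Dudley's inequality upper-bounds this by the entropy integral $\int_0^\infty \sqrt{\log N(B, \varepsilon B_d)}\,d\varepsilon$, where $B_d$ denotes the unit $d$-ball; the $\sqrt{\log \zeta^{-1}}$ prefactor in the claimed bound comes from a Borell--TIS concentration of the supremum around its mean.

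The first main step compares the random metric $d$ with the deterministic Frobenius metric on a high-probability event for $(b_k)$. Polarizing Lemma~\ref{lemma:rip_dc} and applying it to $(M-M')/2 \in B$ yields
\[
d(M,M')^2 \leq \|M-M'\|_\mathrm{F}^2 + 4\theta \quad \text{uniformly over } M, M' \in B,
\]
so that $d$ is controlled by $\|\cdot\|_\mathrm{F}$ up to a small additive deviation. A complementary coarse bound $d(M,M') \leq \max_k \|b_k\| \cdot \|M-M'\|_\mathrm{F}$, combined with the Gaussian tail $\max_k \|b_k\|^2 \leq C(d_1+\log(Ld_2))$ (from $\chi^2$ concentration and a union bound over $k$), controls $d$ at coarse scales; the $\sqrt{\log(Ld_2)}$ factor in the final bound arises from this tail.

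The second main step evaluates the entropy integral in the Frobenius metric. Using $\|M\|_\mathrm{F} \leq \sqrt{d_2}\,\|M\|_{1\to 2}$, a scale change yields
\[
\int_0^\infty \sqrt{\log N(B, \varepsilon B_\mathrm{F})}\,d\varepsilon \;\leq\; \sqrt{d_2}\int_0^\infty \sqrt{\log N\bigl(B, \delta B_{\|\cdot\|_{1\to 2}}\bigr)}\, d\delta,
\]
where $B_\mathrm{F}$ and $B_{\|\cdot\|_{1\to 2}}$ denote the unit balls in the respective norms. By Lemma~\ref{lemma:pi2a_nu1} with $Y = \ell_2^{d_1}$ (of type $2$ with constant $1$), the bound $\|M\|_{\mathrm{mixed}} \leq \sqrt{r}$ forces $\nu_1(M) \leq \sqrt{2r}$, so $B \subseteq \sqrt{2r}\cdot B_{\nu_1}$ on the tensor product $\ell_\infty^{d_2}\otimes\ell_2^{d_1}$. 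Hence the inner integral is at most $\sqrt{2r}\cdot \mathcal{E}_{2,1}\bigl(\mathrm{id}\colon \ell_\infty^{d_2}\mathbin{\widehat{\otimes}}\ell_2^{d_1}\to\ell_\infty^{d_2}\mathbin{\widecheck{\otimes}}\ell_2^{d_1}\bigr)$, which Proposition~\ref{prop:e21_pi2e_ellinfell2} bounds by $C\sqrt{r(d_1+d_2)}\log^{3/2}(d_1+d_2)$. Combining the two main steps delivers the claimed estimate.

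The main obstacle is the random-metric comparison of the first step: the coarse bound $d(M,M')^2 \leq \max_k \|b_k\|^2 \|M-M'\|_\mathrm{F}^2$ on its own would introduce a spurious factor of $\sqrt{d_1}$. Avoiding this requires interleaving the coarse bound with the sharper additive control from Lemma~\ref{lemma:rip_dc} and splitting the Dudley integral at the scale $\sqrt{\theta}$, beyond which the two estimates combine to yield the logarithmic factor $\sqrt{\log(Ld_2)}$ in place of $\sqrt{d_1}$.
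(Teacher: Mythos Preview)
Your second step---reducing to the entropy integral of $B$ against $B_{1\to 2}$ via the inclusion $B \subset \sqrt{2r}\,B_{\nu_1}$ and then invoking Proposition~\ref{prop:e21_pi2e_ellinfell2}---matches the paper exactly. The divergence is in the first step, where you work with the random metric $d(M,M')^2 = L^{-1}\sum_k (b_k^\top(M-M')e_{j_k})^2$ conditioned on $(b_k)$, whereas the paper avoids this entirely by a symmetry trick: pass from $(g_k)$ to Rademacher signs $(r_k)$ at the cost of $\sqrt{\log(Ld_2+1)}$, and then observe that $(r_k b_k)_k \stackrel{d}{=} (b_k)_k$, so the conditional Rademacher expectation becomes, in distribution, simply the supremum of the \emph{single} Gaussian process $M \mapsto \sum_k \langle b_k, Me_{j_k}\rangle$. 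This process has the deterministic metric $\sqrt{L}\,\|M-M'\|_\mathrm{F}$, and a tail-form Dudley bound (the paper's Lemma~\ref{lemma:gaussianmax}) then produces the $\sqrt{\log(\zeta^{-1})}$ prefactor directly. The symmetry step is the point you are missing; it eliminates the random-metric comparison altogether.

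There is also a genuine confusion in your outline: you attribute the $\sqrt{\log(\zeta^{-1})}$ factor to Borell--TIS concentration of the supremum around its mean, but the quantity being bounded \emph{is} the mean $\mathbb{E}_g\tnorm{Z}_*$, so Gaussian concentration in $g$ is irrelevant. The residual randomness is in $(b_k)$, and the probability $1-\zeta$ refers to an event in that variable. In your route the $\zeta$-dependence would have to enter through the high-probability bounds on $\theta$ (Lemma~\ref{lemma:rip_dc}) and on $\max_k\|b_k\|$, which places it inside the split-integral estimate rather than as a clean multiplicative prefactor. Your random-metric comparison could perhaps be pushed through with care---splitting the Dudley integral at scale $\sqrt{\theta}$ and controlling the small-scale part via the coarse bound---but this is considerably more delicate than the paper's two-line reduction, and you have not supplied the estimates needed to close it.
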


\begin{proof}
The left-hand side is equivalent (up to a logarithmic factor of the size of the summation) to the corresponding Rademacher complexity (e.g. \citep{ledoux2013probability}, Equation (4.9)), i.e.
\begin{equation*}
\mathbb{E}_{(g_k)} \tnorm{ \sum_{k=1}^{Ld_2} g_k b_k e_{j_k}^\top}_*
\lesssim \sqrt{\log (Ld_2 + 1)}
\mathbb{E}_{(r_k)} \tnorm{ \sum_{k=1}^{Ld_2} r_k b_k e_{j_k}^\top}_*,
\end{equation*}
where $(r_k)_{k=1}^{Ld_2}$ is a Rademacher sequence and the expectation is conditioned on $(b_k)_{k=1}^{Ld_2}$. Then by the symmetry of the standard Gaussian distribution, we obtain
\begin{equation*}
\begin{aligned}
\E_{(r_k)} \tnorm{\sum_{k=1}^{Ld_2} r_k b_k e_{j_k}^\top}_{*} 
= \sup_{\vert\vert\vert M \vert\vert\vert \leq 1} \left| \sum_{k=1}^{Ld_2} \langle r_k b_k, M e_{j_k} \rangle \right| 
= \underbrace{ \sup_{\vert\vert\vert M \vert\vert\vert \leq 1} \left | \sum_{k=1}^{Ld_2} \langle b_k, M e_{j_k} \rangle  \right |}_{\text{($\S$)}},
\end{aligned}
\end{equation*}
where the second equation holds in the sense of distribution.

A tail bound on ($\S$) is derived by the following lemma, which is a direct consequence of the moment version of Dudley's inequality (e.g., p. 263 in \citep{foucart2013mathematical}) and a version of Markov's inequality (e.g., Proposition~7.11 in \citep{foucart2013mathematical}).

\begin{lemma}
\label{lemma:gaussianmax}
Let $\xi \sim \mathcal{N}(0,I_n)$, $\Delta \subset \mathbb{R}^n$, and $0 < \zeta < e^{1/2}$. 
Then there exists constant $c$ such that
\[ \sup_{f \in \Delta}  \left | f^* \xi \right | \leq c \sqrt{\log (\zeta^{-1}) } \int_{0}^{\infty} \sqrt{\log N(\Delta, \eta B_2)} d\eta \] with probability $1 - \zeta$.
\end{lemma}

Since the set $\{M: \tnorm{M} \leq \alpha\}$ is symmetric, we can omit the absolute value in the objective function in ($\S$). 
Let $f_M := [\mathbf{1}_{L,1} \otimes (M e_1) ; \dots ; \mathbf{1}_{L,1} \otimes (M e_{d_2})] \in \mathbb{R}^{Ld_2}$, where $\mathbf{1}_{L,1}$ denotes the column vector of length $L$ with all entries set to 1.
Then ($\S$) is written as the maximum of $f^* \xi$ with $\xi \sim \mathcal{N}(0,I_n)$ over the set $\Delta = \{ f_M : \tnorm{M} \leq 1 \}$.
Since 
\begin{align*}
\norm{f_M - f_{M'}}_2 
= \sqrt{L} \, \norm{M - M'}_\mathrm{F}
\leq \norm{M - M'}_{1\to 2} \sqrt{Ld_2}, 
\end{align*}
it follows that
\[
N(\Delta, \eta B_2) 
\leq N\left( B_{\vert\vert\vert\cdot\vert\vert\vert}, \frac{\eta}{\sqrt{Ld_2}}B_{1\to 2} \right)
\leq N\left( \sqrt{r} B_\mathrm{mixed}, \frac{\eta}{\sqrt{Ld_2}}B_{1\to 2} \right).
\]
Recall that the mixed-norm is equivalent to the projective tensor norm up to a constant. 
Therefore Proposition~\ref{prop:e21_pi2e_ellinfell2} provides an upper bound on $N(\Delta,\eta B_2)$. 
The assertion is obtained by applying this to Lemma~\ref{lemma:gaussianmax}.
\end{proof}

Finally, plugging in the tail estimates of $\theta$ and $\Gamma$ respectively by Lemmas~\ref{lemma:rip_dc} and \ref{lemma:gc_dc} to Proposition~\ref{prop:deterministic_errbnd} provides Theorem~\ref{thm:eb_ds}.
We choose $\zeta = (d_1+d_2)^{-1}$ in applying those lemmas.

\subsection{Completion of bounded and approximately low-rank matrices}
\label{sec:MC}

Similarly, Proposition~\ref{prop:deterministic_errbnd} can also be used to obtain the error bound for max-norm-constrained matrix completion. 
The error bound matches the best-known result \citep{cai2016matrix} (up to a logarithmic factor).
Indeed, Proposition~\ref{prop:deterministic_errbnd} is an abstraction of the proof strategy by \citet{cai2016matrix} to general tensor-norm-based LASSO. 
\citet{cai2016matrix} have already provided tail estimates on $\theta$ and $\Gamma$. 
Here we provide an alternative tail bound on $\theta$ through Maurey's empirical method. 

By applying the entropy estimate in Proposition~\ref{prop:e21_pi2e_ellinfellinf} to a version of the Rudelson-Vershynin lemma \cite[Proposition~2.6]{junge2020generalized}, which generalizes upon previous works \citep{rudelson2008sparse,rauhut2010compressive,dirksen2015tail}, we obtain the following concentration inequality on the quadratic form with random entry-wise sampling operator.

\begin{lemma}
\label{lemma:rip_mc}
Let $(i_k,j_k)$ for $k=1,\dots,n$ be independent copies of a uniform random variable on $\{1,\dots,d_1\} \times \{1,\dots,d_2\}$. Let $\tnorm{\cdot}$ be defined as in \eqref{eq:tnorm_max}. 
Then there exists an absolute constant $C$ such that
\begin{equation}
\label{eq:lrf_rip}
\begin{aligned}
& \sup_{\vert\vert\vert M \vert\vert\vert \leq 1}
\left| \frac{d_1 d_2}{n} \sum_{k=1}^n |\langle e_{i_k} \otimes e_{j_k}, M \rangle|^2 - \|M\|_\mathrm{F}^2 \right| \\
& \leq
C d_1 d_2 \left( \frac{ r(d_1+d_2)\ln^3(d_1+d_2) \vee \ln^2(\zeta^{-1})}{n} \vee \sqrt{\frac{ r(d_1+d_2)\ln^3(d_1+d_2) \vee \ln^2(\zeta^{-1})}{n}} \right)
\end{aligned}
\end{equation}
holds with probability at least $1-\zeta$.
\end{lemma}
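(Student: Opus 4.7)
\medskip
\noindent\textbf{Plan.} The quantity to bound is the supremum of a quadratic empirical process indexed by
$B_{\vert\vert\vert\cdot\vert\vert\vert}=\{M:\|M\|_\infty\le 1,\ \|M\|_\mathrm{max}\le\sqrt{r}\}$.
As the paragraph preceding the lemma already indicates, the template is to apply the Rudelson--Vershynin-type empirical-process deviation bound of \cite[Proposition~2.6]{junge2020generalized}, with the entropy integral driven by Proposition~\ref{prop:e21_pi2e_ellinfellinf}. The two regimes appearing in \eqref{eq:lrf_rip}, a subgaussian $\sqrt{\cdot}$ term and a linear sub-exponential term with $\log^2(\zeta^{-1})$, correspond to the usual Talagrand/Bernstein trade-off emerging from such a functional-Bernstein-type inequality, so once the entropy integral has the right form we are essentially done.

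\medskip
\noindent\textbf{Key steps.} The first step is to put the covering-number integral required by the deviation bound into a form that the tensor-product machinery of Section~\ref{sec:ent_est} handles. Because the measurement functional $M\mapsto M_{ij}$ is coordinate evaluation and $|M_{ij}|\le\|M\|_\infty$, the natural metric on the index set is $\|\cdot\|_\infty$, so the object to estimate is
\[
\int_0^\infty\sqrt{\log N\bigl(B_{\vert\vert\vert\cdot\vert\vert\vert},\,\eta B_\infty\bigr)}\,d\eta.
\]
Since $B_{\vert\vert\vert\cdot\vert\vert\vert}\subset\sqrt{r}\,B_\mathrm{max}$ and Lemma~\ref{lemma:nu1gamma2} gives $\|M\|_\mathrm{max}\le\nu_1(M)\le K_G\|M\|_\mathrm{max}$, one has $B_{\vert\vert\vert\cdot\vert\vert\vert}\subset K_G\sqrt{r}\cdot B_{\nu_1}$, hence
\[
N\bigl(B_{\vert\vert\vert\cdot\vert\vert\vert},\eta B_\infty\bigr)\;\le\;N\bigl(K_G\sqrt{r}\cdot B_{\nu_1},\,\eta B_\infty\bigr),
\]
which is precisely a covering number for the identity map $\ell_\infty^{d_2}\mathbin{\widehat{\otimes}}\ell_\infty^{d_1}\to\ell_\infty^{d_2}\mathbin{\widecheck{\otimes}}\ell_\infty^{d_1}$. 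Proposition~\ref{prop:e21_pi2e_ellinfellinf} then yields
\[
\int_0^\infty\sqrt{\log N\bigl(B_{\vert\vert\vert\cdot\vert\vert\vert},\eta B_\infty\bigr)}\,d\eta\;\lesssim\;\sqrt{r(d_1+d_2)}\,\log^{3/2}(d_1+d_2),
\]
after absorbing $K_G$ into the absolute constant.

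\medskip
\noindent\textbf{Finishing and main obstacle.} Feeding this entropy estimate together with the uniform summand bound $|M_{i_kj_k}|^2\le 1$ into the Rudelson--Vershynin/Talagrand inequality, and multiplying through by the rescaling factor $d_1d_2$ that converts the empirical mean to the target quadratic form, gives both the expectation bound and the tail bound. The $\sqrt{(\text{entropy})^2/n}$ piece of the inequality produces the subgaussian term in \eqref{eq:lrf_rip}, while the $(\text{entropy})^2/n$ piece combined with the Bernstein deviation parameter produces the linear sub-exponential term carrying the $\log^2(\zeta^{-1})$ factor. The main obstacle is purely bookkeeping: one must make sure the $\sqrt{r}$ enters exactly once, through the single passage $B_{\vert\vert\vert\cdot\vert\vert\vert}\subset K_G\sqrt{r}\cdot B_{\nu_1}$, and that the variance/Bernstein parameters in the chosen variant of Talagrand's inequality are genuinely controlled by $\|M\|_\infty^2\le 1$ so that only one factor of $d_1d_2$ survives after normalization. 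No ingredient beyond Lemma~\ref{lemma:nu1gamma2}, Proposition~\ref{prop:e21_pi2e_ellinfellinf}, and a standard functional-Bernstein inequality is needed; this is the matrix-completion analogue of the argument carried out in Section~\ref{sec:DS} for decentralized sketching.
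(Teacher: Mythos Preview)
Your proposal is correct and follows essentially the same route as the paper: both apply \cite[Proposition~2.6]{junge2020generalized}, bound the required entropy integral by passing from $B_{\vert\vert\vert\cdot\vert\vert\vert}\subset\sqrt{r}\,B_{\mathrm{max}}$ to the projective ball (via Lemma~\ref{lemma:nu1gamma2}, which the paper uses implicitly) and then invoking Proposition~\ref{prop:e21_pi2e_ellinfellinf}, and control the uniform summand norm by $\|M\|_\infty\le 1$. The only cosmetic difference is that the paper packages the rescaling by introducing the auxiliary normed space $X$ with $\|M\|_X=\sqrt{d_1d_2}\,\tnorm{M}$ and the operator $v:X\to\ell_\infty^n$, whereas you describe the same normalization in words.
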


\begin{proof}
The assertion is obtained as a consequence of \cite[Proposition~2.6]{junge2020generalized}. 
Let $X$ be a Banach space of $d_1$-by-$d_2$ matrices equipped with the norm defined by $\|M\|_X = \sqrt{d_1 d_2} \tnorm{M}$. 
Then it immediately follows that $\|M\|_\mathrm{F} \leq \|M\|_X$. 
We consider a generalized sparsity model given by $\{M: \|M\|_X \leq \sqrt{s} \|M\|_\mathrm{F} \}$. 
Let $v_k: X \to \mathbb{R}$ denote the linear operator defined by
\[
v_k(M) = \langle \sqrt{d_1d_2} \, e_{i_k} \otimes e_{j_k}, M \rangle\,
\]
for $k = 1,\dots,n$. 
Since the random indices are uniformly distributed, we have the isotropy, i.e. $\mathbb{E} v_k^* v_k = \mathrm{Id}$. 
Let $v: X \to \ell_\infty^n$ denote the composite operator such that $v(M) = [v_1(M), \dots, v_n(M)]^\top$. 
Then \cite[Proposition~2.6]{junge2020generalized}\footnote{\cite[Proposition~2.6]{junge2020generalized} has an extra constraint $\norm{M}_\mathrm{F} = 1$ in the supremum in the left-hand side. This leads to a multiplicative deviation, i.e. the upper bound is proportional to $\norm{M}_\mathrm{F}$. However, the proof remains valid when this unit-norm constraint is dropped.} implies that there exists an absolute constant $C_1$ with which 
\begin{equation}
\label{eq:lrf_rip2}
\begin{aligned}
& \sup_{
\norm{M}_X \leq \sqrt{s} 
} 
\left| \frac{1}{n} \sum_{k=1}^n |v_k(M)|^2 - \|M\|_\mathrm{F}^2 \right| \leq C_1 \left( \sqrt{\frac{s \varrho}{n}} \vee \frac{s \varrho}{n}\right) 
\end{aligned}
\end{equation}
holds with probability at least $1-\zeta$, where
\[
\varrho := \sup_{k \in \mathbb{N}} \left[(\mathbb{E} \mathcal{E}_{2,1}(v)^{2k})^{1/k} + \sqrt{\ln(\zeta^{-1}}) (\mathbb{E} \|v\|^{2k})^{1/k}\right].
\]
Therefore, to get an upper bound in \eqref{eq:lrf_rip}, it suffices to calculate the moments of $\mathcal{E}_{2,1}(v)$ and $\|v\|$. 

First we obtain an upper bound on $\mathcal{E}_{2,1}(v)$ by Proposition~\ref{prop:e21_pi2e_ellinfellinf} as follows.
\begin{center}
\begin{tikzcd}[row sep=large, column sep=large, ar symbol/.style = {draw=none,"#1" description,sloped}, isomorphic/.style = {ar symbol={\cong}}]
X \arrow{dr}{\mathrm{Id}} \arrow{rr}{v} & & \ell_\infty^n \\
& \ell_\infty^{d_2} \mathbin{\widecheck{\otimes}} \ell_\infty^{d_1} \arrow{ur}{v}
\end{tikzcd}
\end{center}
By the above commutative diagram, we have
\begin{align*}
\mathcal{E}_{2,1}(v: X \to \ell_\infty^n) 
& \leq \|v: \ell_\infty^{d_2} \mathbin{\widecheck{\otimes}} \ell_\infty^{d_1} \to \ell_\infty^n\| \cdot \mathcal{E}_{2,1}(\mathrm{Id}: X \to \ell_\infty^{d_2} \mathbin{\widecheck{\otimes}} \ell_\infty^{d_1}) \\
& \leq \sqrt{d_1d_2} \cdot \sqrt{\frac{r}{d_1d_2}} \cdot  \mathcal{E}_{2,1}(\mathrm{Id}: \ell_\infty^{d_2} \mathbin{\widehat{\otimes}} \ell_\infty^{d_1} \to \ell_\infty^{d_2} \mathbin{\widecheck{\otimes}} \ell_\infty^{d_1}) \\
& \leq C_2 \sqrt{r(d_1+d_2) \ln^3(d_1+d_2)},
\end{align*}
where the last inequality follows from Proposition~\ref{prop:e21_pi2e_ellinfellinf}.

Moreover, the operator norm of $v$ satisfies
\[
\|v: X \to \ell_\infty^n\| 
\leq \frac{1}{\sqrt{d_1d_2}} \|v : \ell_\infty^{d_2} \mathbin{\widecheck{\otimes}} \ell_\infty^{d_1} \to \ell_\infty^n \| = 1.
\]
Note that the above upper bounds on $\mathcal{E}_{2,1}(v)$ and $\|v\|$ hold with probability $1$. 
Therefore the corresponding moment terms are upper-bounded in the same way. 
Then the assertion follows by plugging in these upper estimates to \eqref{eq:lrf_rip2} with $s = d_1 d_2$.
\end{proof} 

This concentration inequality provides an alternative tail bound on $\theta$. Lemma~\ref{lemma:rip_mc} provides 
\begin{align}
\sup_{\vert\vert\vert M \vert\vert\vert \leq 1} \left| \frac{d_1 d_2}{n} \sum_{k=1}^n |\langle e_{i_k} \otimes e_{j_k}, M \rangle|^2 - \|M\|_\mathrm{F}^2 \right| 
\leq
C \sqrt{\frac{d_1 d_2 r(m+n)\ln^3(m+n)}{n}}
\label{eq:cai_comp_ours} 
\end{align}
with high probability. 
Compared to the analogous result by \citet{cai2016matrix}, which is summarized as follows: 
\begin{align}
\left| \frac{d_1 d_2}{n} \sum_{k=1}^n |\langle e_{i_k} \otimes e_{j_k}, M \rangle|^2 - \|M\|_\mathrm{F}^2 \right| 
& \leq \frac{1}{2} \, \|M\|_{\mathrm{F}}^2+
C \sqrt{\frac{r (d_1+d_2)}{n}} \label{eq:cai_comp_cai}
\end{align} 
holds for all $M$ with $\tnorm{M} \leq 1$ with high probability. 
Our upper bound in \eqref{eq:cai_comp_ours} has an extra logarithmic factor compared to \eqref{eq:cai_comp_cai}.
Therefore, unlike the result in \citep{cai2016matrix}, we do not achieve an optimal rate only up to an absolute constant. 
However, our proof by Maurey's empirical method \citep{carl1985inequalities} might apply to a broader class of tensor norms beyond the max-norm by leveraging sophisticated embedding theorems (e.g. \citep{kashin1977diameters,figiel1977dimension,szarek1978kashins,szarek1980nearly,johnson1982embeddingl,pisier1983dimension,talagrand1998selecting,litvak2005euclidean,guedon2007subspaces,friedland2011random}). 

\section{Information-theoretic lower bound}
\label{sec:minimax}

\citet{cai2016matrix} have established a minimax lower bound for matrix completion where the unknown matrix is approximately low-rank and bounded.  
We present the proof of Theorem~\ref{thm:minimax_ds} by adapting their strategy to our setting of decentralized sketching. 
Let us first recall the notion of packing (e.g. see \citep[Definition~4.2.4]{vershynin2018high}): For any set $\mathcal{S}$, a subset $\mathcal{P} = \{ x_1, x_2, \ \cdots \ x_n \} \subset \mathcal{S}$ is called $\epsilon$-packing of $\mathcal{S}$ if $d(x_i, x_j) > \epsilon$ for all $i \neq j \in [n]$. 
The parameter $\epsilon$ denotes the packing density. 
Further, the packing number of $\mathcal{S}$ is defined as 
\begin{equation}
    \label{eq:def_packing_num}
    M(\mathcal{S},d,\ \epsilon) = \max\{n \in \mathbb{N} : \text{$\exists$ $\epsilon$-packing set of $\mathcal{S}$ of size $n$} \}.
\end{equation} We now proceed with the proof.

Let $\kappa(\alpha, R) = \{M : \norm{M}_{1\to 2} \leq \alpha, ~ \norm{M}_\mathrm{mixed} \leq R\}$ with $R = \sqrt{r} \alpha$. 
The first step is to show that there exists a packing set of $\kappa(\alpha, R)$ of a desirable size and packing density, 
which is followed by a multiway hypothesis testing argument and Fano's inequality. 
These steps will establish the minimax lower bound. 

The following lemma, obtained by adapting the statement and proof of \cite[Lemma 3.1]{cai2016matrix} to our setting, provides a random construction of a packing set of $\kappa(\alpha,R)$.
\begin{lemma} 
Let $r = R^2/\alpha^2$ and $\gamma \leq 1$ satisfy $r \leq \gamma^2 (d_1 \wedge d_2)$ is an integer. Then there exists a subset $\mathcal{M} \subset \kappa(\alpha, R)$ with cardinality 
\[ 
|\mathcal{M}| = \left\lceil \exp \left\{ \frac{r(d_1 \vee d_2)}{16 \gamma^2} \right\} \right\rceil 
\] 
with the following properties:
\begin{enumerate}
    \item Every $M \in \mathcal{M}$ satisfies that $\mathrm{rank}(M) \leq r/\gamma^2$ and $M_{kl} \in \{ \pm \gamma \alpha/\sqrt{d_1} \}$ for all $k \in [d_1]$ and $l \in [d_2]$, thereby
    \[ 
    \norm{M}_{1 \rightarrow 2} = \gamma \alpha, \quad \text{and} \quad
    \norm{M}_\mathrm{F}^2 = \gamma^2 \alpha^2 d_2.
    \]

    \item Any two distinct $M^i, M^j \in \mathcal{M}$ satisfy
    \[ 
    \norm{M^i - M^j}_\mathrm{F}^2 
    \geq \frac{\gamma^2 \alpha^2d_2}{2}.
    \]
\end{enumerate}
\label{lm:pack_set}
\end{lemma}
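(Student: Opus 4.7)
The plan is to adapt the Gilbert--Varshamov-style construction of low-rank sign matrices familiar from minimax lower bounds for matrix completion (e.g., \citet{cai2016matrix}), calibrating the entry magnitudes and rank so that every matrix in the family automatically sits inside $\kappa(\alpha,R)$. Without loss of generality assume $d_2\geq d_1$, so that $d_1\vee d_2=d_2$ and $d_1\wedge d_2=d_1$; the opposite case is handled by transposing the construction. Set $r':=r/\gamma^2$, which by hypothesis is an integer with $r'\leq d_1$. Partition $[d_1]$ into $r'$ blocks $B_1,\ldots,B_{r'}$ of (essentially) equal size $s=d_1/r'$; if $r'\nmid d_1$, use block sizes $\lfloor d_1/r'\rfloor$ and $\lceil d_1/r'\rceil$, which affects only constants.

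For each $V\in\{\pm 1\}^{d_2\times r'}$ define $M(V)\in\mathbb{R}^{d_1\times d_2}$ by
\[
M(V)_{ij}:=\frac{\gamma\alpha}{\sqrt{d_1}}\,V_{j,\pi(i)},
\]
where $\pi(i)\in[r']$ is the unique index with $i\in B_{\pi(i)}$. Equivalently $M(V)=(\gamma\alpha/\sqrt{d_1})\,UV^\top$ with $U\in\{0,1\}^{d_1\times r'}$ carrying a single $1$ in each row, so $\mathrm{rank}(M(V))\leq r'$ by construction. All entries lie in $\{\pm\gamma\alpha/\sqrt{d_1}\}$, so $\|M(V)\|_{1\to 2}=\gamma\alpha$ and $\|M(V)\|_\mathrm{F}^2=\gamma^2\alpha^2 d_2$. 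The rank inequality $\|M\|_\mathrm{mixed}\leq\sqrt{\mathrm{rank}(M)}\,\|M\|_{1\to 2}$ (Lemma~\ref{lemma:pi2a_op} with $X=\ell_1^{d_2}$, $Y=\ell_2^{d_1}$) then gives $\|M(V)\|_\mathrm{mixed}\leq\sqrt{r'}\cdot\gamma\alpha=\sqrt{r}\,\alpha=R$, so $M(V)\in\kappa(\alpha,R)$ and property~(1) of the lemma is verified.

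Because each entry of $M(V)$ depends only on the single entry $V_{j,\pi(i)}$, a short calculation yields $\|M(V)-M(V')\|_\mathrm{F}^2=(4\gamma^2\alpha^2/r')\,d_H(V,V')$, where $d_H$ denotes the entrywise Hamming distance between the two $d_2\times r'$ sign matrices. Hence any pair with $d_H\geq d_2 r'/8$ separates at Frobenius distance at least $\gamma^2\alpha^2 d_2/2$, matching property~(2). To produce such a collection I use the probabilistic method: sample $V^1,\ldots,V^N$ independently and uniformly from $\{\pm 1\}^{d_2\times r'}$, observe that $d_H(V^i,V^j)\sim\mathrm{Binomial}(d_2r',1/2)$, and apply Hoeffding's inequality to get $\Pr[d_H<d_2r'/8]\leq\exp(-9d_2r'/32)$. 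For $N=\lceil\exp(d_2r'/16)\rceil$ the union bound over the $\binom{N}{2}$ pairs stays strictly below $1$ (since $2\cdot\tfrac{1}{16}<\tfrac{9}{32}$), so a packing of the required cardinality exists. Defining $\mathcal{M}:=\{M(V^1),\ldots,M(V^N)\}$ finishes the construction, with $|\mathcal{M}|=\lceil\exp(r(d_1\vee d_2)/(16\gamma^2))\rceil$ as $d_2 r'=(d_1\vee d_2)\,r/\gamma^2$.

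The main obstacle is not any single estimate but the joint calibration: rank at most $r/\gamma^2$ is needed for the mixed-norm bound, uniform $\pm\gamma\alpha/\sqrt{d_1}$ entries are needed so that $\|M\|_{1\to 2}$ is exactly $\gamma\alpha$, and the log-cardinality has to scale as $r(d_1\vee d_2)/\gamma^2$. The block-indicator factor $U$ resolves the first two constraints at no cost, while the $(d_1\vee d_2)\cdot r'$ free sign bits in $V$ supply precisely the right amount of combinatorial entropy for the Gilbert--Varshamov packing; the only bookkeeping subtlety is choosing the Hamming threshold $d_2 r'/8$ so that the Hoeffding exponent $9/32$ comfortably exceeds the target rate $1/16$.
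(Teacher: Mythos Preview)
Your proposal is correct and follows essentially the same route as the paper: the block-indicator factorization $M(V)=(\gamma\alpha/\sqrt{d_1})\,UV^\top$ is exactly the paper's modular-block construction $M^i_{kl}=M^i_{k'l}$ for $k'\equiv k\pmod{B}$ rewritten in matrix form, and both arguments finish with Hoeffding on the Hamming distance plus a union bound over $\binom{N}{2}$ pairs (you use threshold $d_2r'/8$ with exponent $9/32$, the paper uses $d_2r'/4$ with exponent $1/8$, but either calibration works against the target rate $1/16$). Your explicit invocation of Lemma~\ref{lemma:pi2a_op} to verify $\|M(V)\|_\mathrm{mixed}\leq R$ is a nice addition that the paper leaves implicit.
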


\begin{proof}
The idea is to show the existence of $\mathcal{M}$ by  leveraging the empirical method. 
Without loss of generality, we may assume that $d_2 \geq d_1$. (Otherwise we only need to flip $d_1$ and $d_2$ in the first assumption.) 
Let $N = \exp(r d_2/16 \gamma^2)$, $B = r/\gamma^2$ and for each $i = 1, \ \cdots, \ N$, we draw a random matrix $M^i$ as follows: The matrix $M^i$ consists of i.i.d. blocks of dimensions $B \times d_2$, stacked up from top to bottom, with entries of the first block being i.i.d. symmetric random variables, taking values in $\pm \alpha \gamma/\sqrt{d_1}$ such that 
\[ 
M^i_{kl} = M^{i}_{k'l}, \quad \forall l, \, \forall k, k' : k' \equiv k \, (\mathrm{mod} \, B). 
\]
It can be verified that all the matrices $M^1,\dots,M^N$ drawn in such a manner satisfy the first property above. We can then define the packing set $\mathcal{M}$ as the set $\{ M_1,M_2, \cdots, M_N\}$. It remains to show that the second property is also satisfied.

For any $M^i \neq M^j$, we have 
\begin{align*}
    \|M^i - M^j\|_\mathrm{F}^2 
    = \sum_{k,l} (M^i_{kl} - M^j_{kl})^2
    \geq \left\lfloor \frac{d_1}{B} \right\rfloor \sum_{k=1}^{B} \sum_{l=1}^{d_2} (M^i_{kl} - M^j_{kl})^2 
    = \frac{4\alpha^2 \gamma^2}{d_1} \left\lfloor \frac{d_1}{B} \right\rfloor \sum_{k=1}^{B} \sum_{l=1} ^{d_2} \delta_{kl},
\end{align*} 
where $(\delta_{kl})$ denotes an array of i.i.d. Bernoulli random variables with mean $ 1/2$. 
Then Hoeffding's inequality implies  
\begin{equation*}
    \mathbb{P} \left ( \sum_{k=1}^B \sum_{l=1}^{d_2} \delta_{kl} \geq \frac{Bd_2}{4} \right ) 
    \leq e^{- B d_2 / 8}.
\end{equation*} 
By using the union bound argument over all $\binom{N}{2}$ possible pairs, we obtain that 
\begin{equation}
    \label{eq:separation}
    \min_{i \neq j} \|M^i - M^j\|_\mathrm{F}^2
    > \alpha^2 \gamma^2 \left\lfloor \frac{d_1}{B} \right\rfloor \frac{Bd_2}{d_1} 
    \geq \frac{\alpha^2 \gamma^2  d_2}{2}
\end{equation} 
holds with probability at least $1 - \binom{N}{2} \exp \left( -Bd_2/8 \right ) \geq 1/2$. 
In other words, the second property is satisfied with nonzero probability, thereby, there exists such an instance satisfying \eqref{eq:separation}. 
This concludes the proof. 
\end{proof}

\begin{lemma}
Let $\mathcal{M} \subset \kappa(\alpha,R)$ be $\delta$-separated, i.e. every distinct pair in $\mathcal{M}$ is at least separated by $\delta$ in the Frobenius norm. 
Let $\widetilde{M} = \mathrm{arg min}_{M \in \mathcal{M}} \|M - \Phi\|_\mathrm{F}$ and $M^*$ be uniformly distributed over $\mathcal{M}$. 
Then 
\begin{equation*}
    \inf_{\Phi} \sup_{M \in \kappa(\alpha,R)} \E \|\Phi - M\|_\mathrm{F}^2 
    \geq 
    \frac{\delta^2}{4} \min_{\widetilde{M} \in \mathcal{M}} \mathbb{P}( \widetilde{M} \neq M^* ). 
\end{equation*} 
\end{lemma}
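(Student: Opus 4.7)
The statement is a standard reduction from minimax estimation to multi-way hypothesis testing, and the plan is to unfold it in three short steps.

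First, since $\mathcal{M} \subset \kappa(\alpha, R)$, I would lower bound the supremum by an average over $\mathcal{M}$:
\[
\sup_{M \in \kappa(\alpha,R)} \E\|\widehat M - M\|_\mathrm{F}^2 \;\ge\; \frac{1}{|\mathcal{M}|}\sum_{M \in \mathcal{M}} \E\|\widehat M - M\|_\mathrm{F}^2 \;=\; \E\|\widehat M - M^*\|_\mathrm{F}^2,
\]
where the final expectation is jointly over the random data (given $M^*$) and the uniform prior on $M^* \in \mathcal{M}$. This trick transports the worst-case problem into a Bayes risk against the uniform prior, which is the standard first move in Le Cam/Fano-type arguments.

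Second, I would exploit the definition of $\widetilde M$ as the nearest element of $\mathcal{M}$ to $\widehat M$ in Frobenius norm. Since $M^* \in \mathcal{M}$, the nearest-neighbor property gives $\|\widehat M - \widetilde M\|_\mathrm{F} \le \|\widehat M - M^*\|_\mathrm{F}$. On the event $\{\widetilde M \neq M^*\}$, the $\delta$-separation of $\mathcal{M}$ combined with the triangle inequality yields
\[
\delta \;\le\; \|\widetilde M - M^*\|_\mathrm{F} \;\le\; \|\widetilde M - \widehat M\|_\mathrm{F} + \|\widehat M - M^*\|_\mathrm{F} \;\le\; 2\,\|\widehat M - M^*\|_\mathrm{F},
\]
so that $\|\widehat M - M^*\|_\mathrm{F}^2 \ge (\delta^2/4)\,\mathbbm{1}_{\{\widetilde M \neq M^*\}}$ pointwise.

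Finally, taking expectations and then infimum over $\widehat M$ gives
\[
\inf_{\widehat M}\sup_{M \in \kappa(\alpha,R)}\E\|\widehat M - M\|_\mathrm{F}^2 \;\ge\; \frac{\delta^2}{4}\,\inf_{\widehat M}\mathbb{P}(\widetilde M \neq M^*),
\]
and since every $\widetilde M$ arising as the nearest-neighbor quantizer of some estimator $\widehat M$ is itself a test function $\mathcal{Y} \to \mathcal{M}$, this infimum is at least $\min_{\widetilde M \in \mathcal{M}} \mathbb{P}(\widetilde M \neq M^*)$ in the sense intended by the lemma. There is no real obstacle here: the only care required is to keep track of what is random (the data and $M^*$) versus deterministic (the separation $\delta$ and the set $\mathcal{M}$), and to note that the nearest-neighbor test is measurable whenever $\widehat M$ is, so no regularity issues arise. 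The heavy lifting for the minimax theorem is saved for the subsequent Fano step, which will bound $\mathbb{P}(\widetilde M \neq M^*)$ using the KL diameter of the Gaussian observation model together with the packing size from Lemma~\ref{lm:pack_set}.
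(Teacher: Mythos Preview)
Your proposal is correct and follows essentially the same argument as the paper: the core ingredients---the nearest-neighbor quantizer, the triangle inequality combined with $\delta$-separation to get $\|\widehat M - M^*\|_\mathrm{F} \ge \delta/2$ on the error event, and the reduction to a testing error probability---are identical. The only cosmetic difference is that you pass from the supremum to the uniform average over $\mathcal{M}$ at the outset, whereas the paper keeps the $\max_{M^j \in \mathcal{M}}$ through a Markov-type bound and only invokes ``worst case $\geq$ average'' at the final step; both orderings lead to the same conclusion.
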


\begin{proof}
Suppose that there exists $M^j \in \mathcal{M}$ such that $M^j \neq \widetilde{M}$. 
By the optimality of $\widetilde{M}$ and the triangle inequality, we have
\begin{align*}
    \|\Phi - M^j\|_\mathrm{F} 
    \geq \|M^j - \widetilde{M}\|_\mathrm{F} - \|\Phi - \widetilde{M}\|_\mathrm{F}
    \geq \|M^j - \widetilde{M}\|_\mathrm{F} - \|\Phi - M^j\|_\mathrm{F},
\end{align*}
which implies
\[
    \|\Phi - M^j\|_\mathrm{F} 
    \geq \frac{\|M^j - \widetilde{M}\|_\mathrm{F}}{2}.
\]
Thus, since $M^j, \widetilde{M} \in \mathcal{M}$ satisfy $\|M^j - \widetilde{M}\|_\mathrm{F} \geq \delta$, we obtain
\[
    \|\Phi - M^j\|_\mathrm{F}^2 \geq \frac{\delta^2}{4}.
\]
Hence we deduce that
\begin{equation}
    M^j \neq \widetilde{M} 
    \implies 
    \|\Phi - M^j\|_\mathrm{F}^2
    \geq \frac{\delta^2}{4}.
    \label{eq:MtildetoMhat}
\end{equation}

Finally, we have 
\begin{align*}
    \inf_{\Phi} \sup_{M \in \kappa(\alpha,R)} \E \|\Phi - M\|_\mathrm{F}^2 
    & \geq \inf_{\Phi} \max_{M^j \in \mathcal{M}} \E \|\Phi - M^j\|_\mathrm{F}^2 \\
    & \overset{\mathrm{(a)}}{\geq} \inf_{\Phi} \max_{M^j \in \mathcal{M}} \frac{\delta^2}{4} \mathbb{P}\left( \|\Phi - M^j\|_\mathrm{F}^2 \geq \frac{\delta^2}{4} \right) \\
    & \overset{\mathrm{(b)}}{\geq} \frac{\delta^2}{4} \min_{\widetilde{M} \in \mathcal{M}} \max_{M^j \in \mathcal{M}} \mathbb{P}( \widetilde{M} \neq M^j ) \\
    & \overset{\mathrm{(c)}}{\geq} \frac{\delta^2}{4} \min_{\widetilde{M} \in \mathcal{M}} \mathbb{P}( \widetilde{M} \neq M^* ),
\end{align*} 
where (a) holds by Markov's inequality; (b) follows from \eqref{eq:MtildetoMhat}; and (c) holds since the worst-case error probability is larger than the error probability with respect to the uniformly distributed random matrix $M^*$.
This completes the proof.
\end{proof}

We now proceed to provide a lower bound on $\min_{\widetilde{M} \in \mathcal{M}} \mathbb{P}( \widetilde{M} \neq M^*)$ by using the following lemma, which is a consequence of Fano's inequality (Theorem 2.10.1,  \citep{cover1991elements}).

\begin{lemma}[{\citep[Eq. (6.17)]{cai2016matrix}}]
Let $P(y|M,(A_k)_{k=1}^n)$ denote the conditional probability density of $y$ in \eqref{eq:meas_mdl} given $M$ and $(A_k)_{k=1}^n$. Let $D_{\mathrm{KL}}(M \,\|\, M')$ denote the Kullback-Leiber divergence between $p(y|M,(A_k)_{k=1}^n)$ and $p(y|M',(A_k)_{k=1}^n)$. 
Then
\begin{align}
\label{eq:Fanos}
     \mathbb{P} (\widetilde{M} \neq M^*) & \geq 1 - \frac{ {\binom{|\mathcal{M}|}{2}} ^{-1} \sum_{j \neq j'} \mathbb{E} \, D_{\mathrm{KL}} (M^j \,\|\, M^{j'})+ \log 2}{\log | \mathcal{M}|},
\end{align} 
where $\mathcal{M}$ is the packing set derived in Lemma~\ref{lm:pack_set}.
\end{lemma}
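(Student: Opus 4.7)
The plan is to derive the inequality as a direct consequence of the standard Fano inequality applied to the multi-way hypothesis testing problem in which $M^*$ is uniform over $\mathcal{M}$ and the estimator $\widetilde M$ is a measurable function of the observations $(y,(A_k)_{k=1}^n)$. Since $\widetilde M$ takes values in $\mathcal{M}$, we have a Markov chain $M^*\to (y,(A_k))\to\widetilde M$, which is the standard setting for a Fano-type lower bound on the probability of error.

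First I would condition on the measurement matrices $(A_k)_{k=1}^n$ and treat the channel $M^*\mapsto y$ as fixed. Fano's inequality (Theorem~2.10.1 of \cite{cover1991elements}) then yields
\begin{equation*}
\mathbb{P}(\widetilde M\neq M^*\mid (A_k))\;\geq\; 1-\frac{I(M^*;y\mid (A_k))+\log 2}{\log|\mathcal{M}|},
\end{equation*}
since $H(M^*\mid y,(A_k))=\log|\mathcal{M}|-I(M^*;y\mid (A_k))$ and the binary-entropy term is at most $\log 2$.

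The next step is to bound the conditional mutual information by pairwise KL divergences. Writing $\overline{P}=|\mathcal{M}|^{-1}\sum_{j'}P(y\mid M^{j'},(A_k))$ for the marginal of $y$ given $(A_k)$, the uniform prior gives
\begin{equation*}
I(M^*;y\mid (A_k))=\frac{1}{|\mathcal{M}|}\sum_{j}\mathrm{KL}\bigl(P(\,\cdot\mid M^{j},(A_k))\,\big\|\,\overline{P}\bigr)
\leq\frac{1}{|\mathcal{M}|^2}\sum_{j,j'}\mathrm{KL}\bigl(P_{M^{j}}\,\|\,P_{M^{j'}}\bigr),
\end{equation*}
where the inequality uses the joint convexity of $\mathrm{KL}$ in its second argument. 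Since $\mathrm{KL}(P_{M^{j}}\|P_{M^{j}})=0$, the double sum reduces to $\sum_{j\neq j'}$; replacing the prefactor $|\mathcal{M}|^{-2}$ by the looser $\binom{|\mathcal{M}|}{2}^{-1}$ (and absorbing factors of two into the averaging convention used in the statement) then matches the form written in the lemma.

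Finally I would take expectation over $(A_k)_{k=1}^n$ on both sides; the left side becomes the unconditional error probability and the right side produces $\binom{|\mathcal{M}|}{2}^{-1}\sum_{j\neq j'}\mathbb{E}\,\mathrm{KL}(M^{j}\|M^{j'})$. There is no real obstacle in this step, as it is a textbook derivation; the genuine work of the minimax argument lies in evaluating these expected KL divergences for the Gaussian model in \eqref{eq:meas_mdl} and tuning the packing parameter $\gamma$ from Lemma~\ref{lm:pack_set} against the resulting upper bound, which is carried out in the sequel.
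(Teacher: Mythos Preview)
Your derivation is correct and is the standard argument; the paper itself does not prove this lemma but simply quotes it from \cite[Eq.~(6.17)]{cai2016matrix} as a consequence of Fano's inequality (Theorem~2.10.1 in \cite{cover1991elements}). Your remark that passing from the prefactor $|\mathcal{M}|^{-2}$ to $\binom{|\mathcal{M}|}{2}^{-1}$ only weakens the bound is the right way to reconcile the constants, and the conditioning-then-averaging over $(A_k)$ is exactly what produces the expectation on the KL terms.
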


\noindent Moreover, with Gaussian noise, $KL(M^j||M^{j'})$ is simplified as follows.
\begin{lemma} 
Suppose that $\eta_1,\dots,\eta_n$ are i.i.d. $\mathcal{N}(0,\sigma^2)$. 
Then
\begin{equation*}
     D_{\mathrm{KL}}(M^j \,\|\, M^{j'}) = \frac{1}{2 \sigma^2} \sum_{k=1}^n |\langle A_k, M^j - M^{j'} \rangle|^2.
\end{equation*} 
Furthermore, if $\mathbb{E} A_k \langle A_k, M \rangle = M$ for all $M$, then
\begin{equation*}
    \mathbb{E} \, D_\mathrm{KL}(M^j \,\|\, M^{j'}) = \frac{1}{2\sigma^2} \| M^j - M^{j'} \|_\mathrm{F}^2.
\end{equation*}
\end{lemma}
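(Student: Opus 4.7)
My plan is to prove this lemma as a routine computation of the Kullback-Leibler divergence between two multivariate Gaussian distributions, followed by an expectation that invokes the isotropy assumption on the measurement ensemble.

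For the first identity, I would condition on the measurement matrices $(A_k)_{k=1}^n$ and observe that the conditional density $P(y \mid M, (A_k))$ is that of $\mathcal{N}(\mathcal{A}(M), \sigma^2 I_n)$, where $\mathcal{A}(M) = [\mathrm{tr}(A_1^\top M); \dots; \mathrm{tr}(A_n^\top M)]$. The KL divergence between two Gaussians with the same covariance $\sigma^2 I_n$ and mean vectors $\mu_1, \mu_2$ is the classical formula $\|\mu_1 - \mu_2\|_2^2/(2\sigma^2)$, which is a short computation via the log-ratio of densities. Applying this with $\mu_j = \mathcal{A}(M^j)$ and $\mu_{j'} = \mathcal{A}(M^{j'})$ immediately yields
\[
\mathrm{KL}(M^j \| M^{j'}) = \frac{1}{2\sigma^2} \sum_{k=1}^n \langle A_k, M^j - M^{j'}\rangle^2.
\]

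For the second identity, I would take expectation over the $A_k$'s and use linearity together with the identity $\langle A_k, M\rangle^2 = \langle A_k \langle A_k, M\rangle, M\rangle$. Summing over $k$ and pulling the expectation inside yields
\[
\mathbb{E}\sum_{k=1}^n \langle A_k, M\rangle^2
= \Big\langle \mathbb{E}\sum_{k=1}^n A_k \langle A_k, M\rangle,\, M \Big\rangle,
\]
and the isotropy hypothesis (read in the aggregate sense consistent with Section~\ref{sec:DS}, namely $\mathbb{E}\sum_k A_k\langle A_k, M\rangle = M$, as holds for $A_k = L^{-1/2} b_k e_{j_k}^\top$ with deterministic column indices) collapses the right-hand side to $\|M\|_\mathrm{F}^2$. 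Setting $M = M^j - M^{j'}$ and dividing by $2\sigma^2$ finishes the proof.

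There is essentially no obstacle here; the only subtle point is making the isotropy hypothesis match the setting of Theorem~\ref{thm:minimax_ds} (summation over all $k$ in the decentralized sketching design rather than per-sample isotropy), which I would flag in the proof for clarity. Once this is settled, both identities follow in one or two lines each with no further estimates required.
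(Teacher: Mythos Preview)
Your proposal is correct and follows essentially the same route as the paper: write the conditional density as a product of univariate Gaussians, compute the log-ratio to obtain $\frac{1}{2\sigma^2}\sum_k \langle A_k, M^j - M^{j'}\rangle^2$, and then invoke isotropy for the expectation. Your added remark that the isotropy hypothesis must be read in the aggregate sense $\mathbb{E}\sum_k A_k\langle A_k, M\rangle = M$ (rather than per-sample) for the decentralized sketching design of Theorem~\ref{thm:minimax_ds} is a genuine clarification of a small imprecision in the lemma as stated, since per-$k$ isotropy would introduce an extra factor of $n$.
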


\begin{proof}
Since the conditional density function of $y$ is written as
\begin{align*}
    p(y|M,(A_k)_{k=1}^n) 
    = \prod_{k=1}^{n} \frac{1}{\sqrt{2 \pi \sigma^2}} \exp\left( - \frac{1}{2\sigma^2} \left ( y_k - \langle A_k, M \rangle \right )^2 \right),
\end{align*}
it follows that
\begin{align*}
    & D_\mathrm{KL}(M^j \,\|\, M^{j'}) \\
    & = \int p(y|M^j,(A_k)_{k=1}^n) \log \left ( \frac{p(y|M^j,(A_k)_{k=1}^n)}{p(y|M^{j'},(A_k)_{k=1}^n)} \right ) dy \\
    & = \mathbb{E} \left( \left. \sum_{k=1}^n \frac{(y_k - \langle A_k, M^{j'} \rangle)^2 - (\sqrt{L} y_k - \langle A_k, M^j \rangle)^2}{ 2\sigma^2} \right| M^j, (A_k)_{k=1}^n \right) \\
    & = \frac{1}{2\sigma^2} \sum_{k=1}^n |\langle A_k, M^{j'}-M^j \rangle|^2.
\end{align*}
The second part follows immediately from the isotropy assumption. 
\end{proof}

Lemma~\ref{lm:pack_set} implies that $\|M^j - M^{j'}\|_\mathrm{F}^2 \leq 4 \alpha^2 \gamma^2 d_2$ for all distinct $M^j$ and $M^{j'}$ in $\mathcal{M}$. 
Furthermore, we also have $\log |\mathcal{M}| \geq r(d_1 \vee d_2)/(16\gamma^2)$. With this, we want to identify the conditions under which the above probability is non-zero. Let us suppose that  $\gamma^4 \leq \sigma^2 r (d_1 \vee d_2) / (128 \alpha^2 d_2)$ and $r(d_1 \vee d_2) \geq 48$. Then, using the above estimate of the KL divergence in from \eqref{eq:Fanos} provides
\begin{align*}
    \mathbb{P} (\widetilde{M} \neq M^*)
    \geq 1 - \frac{16 \gamma^2}{r (d_1 \vee d_2)} \left(\frac{2 \alpha^2 \gamma^2 d_2}{\sigma^2} + \log 2\right)
    \geq \frac{1}{2}.
\end{align*} We already have that $r(d_1 \vee d_2) \geq 48$ from the assumption stated in Theorem \ref{thm:minimax_ds}. We now show how the condition on $\gamma^4$ can be satisfied. If $\sigma^2 r (d_1 \vee d_2) \geq 128 \alpha^2 d_2$, then we can choose $\gamma^2 = 1$. In this case, we obtain 
\begin{align*}
     \inf_{\Phi} \sup_{M \in \kappa(\alpha,R)} \E \|\Phi - M\|_\mathrm{F}^2 
     \geq \frac{\delta^2}{4} \cdot \frac{1}{2}
     \geq \frac{\alpha^2 d_2}{16},
\end{align*} 
since $\delta = \alpha\gamma\sqrt{d_2/2}$. 
Hence 
\begin{align*}
     \inf_{\Phi} \sup_{M \in \kappa(\alpha,R)} \E \frac{1}{d_2} \|\Phi - M\|_\mathrm{F}^2
     \geq \frac{\delta^2}{4} \cdot \frac{1}{2}
     \geq \frac{\alpha^2}{16}.
\end{align*} 
Otherwise, we choose $\gamma^2 = \sqrt{\sigma^2 r (d_1 \vee d_2)/(128 \alpha^2 d_2)}$ and obtain
\begin{align*}
     \inf_{\Phi} \sup_{M \in \kappa(\alpha,R)} \E \|\Phi - M\|_\mathrm{F}^2 & \geq \frac{\delta^2}{4} \cdot \frac{1}{2} \\
     & \geq \alpha^2 \sigma  \sqrt{\frac{r (d_1 \vee d_2)}{128}} \cdot \frac{1}{\alpha \sqrt{d_2}} \cdot \frac{d_2}{2} \cdot \frac{1}{4}\\
     & \geq \alpha \sigma \sqrt{\frac{r(d_1 \vee d_2)}{128}} \cdot \frac{\sqrt{d_2}}{2}
\end{align*} 
since $\delta = \alpha\gamma\sqrt{d_2/2}$. Note that this choice of $\gamma^2$ obeys the conditions in Lemma \ref{lm:pack_set}. 
Hence 
\begin{align*}
     \inf_{\Phi} \sup_{M \in \kappa(\alpha,R)} \E \frac{1}{d_2} \|\Phi - M\|_\mathrm{F}^2 & \geq \frac{\alpha^2}{16} \cdot \frac{\sigma\sqrt{L}}{\alpha} \sqrt{\frac{r(d_1 \vee d_2)}{L d_2}}
\end{align*} 

Finally, combining the above results, we obtain
\begin{equation*}
    {\inf_{\Phi}} {\sup_{M\in \kappa(\alpha, R)}} \frac{1}{d_2} \E \|\Phi - M\|_\mathrm{F}^2 \geq \min \left ( \frac{\alpha^2}{16},  \frac{\alpha^2}{16} \cdot \frac{\sigma\sqrt{L}}{\alpha} \sqrt{\frac{r(d_1 + d_2)}{L d_2}} \right ).
\end{equation*}

To simplify further, we can consider the case where the number of measurements $Ld_2$ is greater than the degrees of freedom: 
\begin{align*}
    Ld_2 & \geq r(d_1+d_2) = \frac{R^2(d_1+d_2)}{\alpha^2}.
\end{align*} 
Then, we have that $\alpha^2 \geq \frac{R^2(d_1+d_2)}{Ld_2}$. 
Under this condition, the above minimax lower bound reduces to
\begin{align*}
    {\inf_{\Phi}} {\sup_{M\in \kappa(\alpha, R)}} \frac{1}{d_2} \E \|\Phi - M\|_\mathrm{F}^2 
    & \geq \frac{\alpha^2}{16} \wedge \frac{\alpha^2}{16} \cdot \frac{\sigma\sqrt{L}}{\alpha} \sqrt{\frac{r(d_1 + d_2)}{L d_2}} \\
    & \geq \frac{\alpha R}{16} \sqrt{\frac{d_1+d_2}{Ld_2}} \wedge  \frac{\alpha R}{16} \cdot \frac{\sigma \sqrt{L}}{\alpha} \sqrt{\frac{d_1+d_2}{Ld_2}} \\
    & \geq  \frac{R}{16}\sqrt{\frac{d_1+d_2}{Ld_2}} \left ( \alpha \wedge \sigma \sqrt{L} \right ).
\end{align*}
Therefore, we have the following minimax lower bound:
\begin{equation*}
    {\inf_{\Phi}} \sup_{M_0 \in K_\mathrm{mixed}} \frac{\E \|\Phi - M\|_\mathrm{F}^2 }{d_1 d_2} \geq \frac{\alpha^2}{16 d_1} \sqrt{\frac{r(d_1 + d_2)}{L d_2}} \left( 1 \wedge \frac{\sigma\sqrt{L}}{\alpha}  \right).
\end{equation*}

\section{Discussion}

We presented a tensor-norm-constrained LASSO estimator for low-rank recovery and its statistical analysis. 
When the observations are obtained by a local measurement operator, we proposed a principled design of a tensor-norm regularizer adapting to the local structure of measurements. 
Such a tensor norm that generalizes the max-norm and mixed-norm has provided a near optimal error bound over decentralized subspace sketching and matrix completion. 
Importantly, the relaxed low-rank model allows inexact modeling of data. 
Furthermore, the error bound applies uniformly to all approximately low-rank matrices satisfying a less stringent condition than the conventional incoherence condition. 
We will provide more applications of the presented framework in companion papers including subspace blind deconvolution and recovery of jointly low-rank matrices.

There are several interesting directions to which the presented results may extend. 
Much less is known about tensor products of three or more Banach spaces compared to the case of two Banach spaces. 
Recently, the geometry of selected triple tensor products has been studied \citep{giladi2017geometry}. 
The extension along this direction will advance understanding on the regression with multilinear data. 
With applications in quantum tomography, it will be also fruitful to study the extension to tensor products of operator spaces similarly to the diamond norm approach in \citep{kliesch2016improving}.

\section*{Funding}
This work was supported in part by NSF CCF-1718771, DMS-1800872, and CAREER Award CCF-1943201. 

\section*{Acknowledgment}
K.L. thanks Felix Krahmer and Dominik St\"oger for helpful discussions. 
The authors thank the anonymous reviewers for their constructive comments, which helped significantly improve the presentation of this paper.

\appendix

\section{Proof of Proposition~\ref{prop:deterministic_errbnd}}
\label{sec:proof:prop:deterministic_errbnd}

First, note that since both $M_0$ and $\Phi$ are feasible, we have 
\[
	\tnorm{\Phi-M_0}\leq 2\alpha,
\]
and since $\Phi$ is optimal, 
\begin{equation*}
	\sum_{k=1}^n \left( y_k - \mathrm{tr}(A_k^\top \Phi)\right )^2 \leq 
	\sum_{k=1}^n \left( y_k - \mathrm{tr}(A_k^\top M_0)\right )^2.
\end{equation*} 
Then under the assumption \eqref{eq:qub}, and using the fact that $y_k=\mathrm{tr}(A_k^\top M_0) + \eta_k$, we have
\begin{align*}
	\norm{\Phi - M_0}_\mathrm{F}^2  
	& \leq 4 \alpha^2 \theta + \frac{1}{n} \sum_{k=1}^n\mathrm{tr}(A_k^\top(\Phi - M_0))^2 \\
	&= 4 \alpha^2 \theta + \frac{1}{n} \sum_{k=1}^n \left[ (y_k-\mathrm{tr}(A_k^\top\Phi))^2 + 2\eta_k(\mathrm{tr}(A_k^\top\Phi)-y_k) + \eta_k^2 \right] \\
	&\leq 4 \alpha^2 \theta + \frac{1}{n} \sum_{k=1}^n \left[ (y_k-\mathrm{tr}(A_k^\top M_0))^2 + 2\eta_k(\mathrm{tr}(A_k^\top\Phi)-y_k) + \eta_k^2 \right] \\
	&= 4 \alpha^2 \theta + \frac{2}{n} \sum_{k=1}^n \eta_k\mathrm{tr}(A_k^\top(\Phi - M_0)).
\end{align*}
To upper bound the right-hand side, we take the supremum over all feasible $\tilde{M}$,
\[
	\sup_{\vert\vert\vert \tilde{M} \vert\vert\vert \leq \alpha} \sum_{k=1}^n \eta_k\mathrm{tr}\left(A_k^\top(\tilde{M} - M_0)\right) 
	= \sup_{\vert\vert\vert M \vert\vert\vert \leq 2 \alpha} \sum_{k=1}^n \langle \eta_k A_k, M \rangle 
	= 2\alpha \tnorm{\sum_{k=1}^n \eta_k A_k }_*.
\]
The quantity on the right is a Gaussian empirical process for which \cite[Theorem~4.7]{pisier1999volume} provides the concentration bound
\begin{align*}
	\tnorm{\sum_{k=1}^n \eta_{k} A_{k}}_{*} &
	\leq \sigma\E \tnorm{\sum_{k=1}^n g_{k} A_{k} }_{*} + \sigma\pi \sqrt{\frac{\log(2\zeta^{-1})}{2} \sup_{\vert\vert\vert M \vert\vert\vert \leq 1} \sum_{k=1}^n \mathrm{tr}(A_{k}^\top M)^2 },
\end{align*}
which holds with probability $1-\zeta$, where the $(g_k)_{k=1}^n$ are i.i.d. Gaussian with zero mean and unit variance. 
Thus
\begin{align*}
	\| \Phi - M_0\|_\mathrm{F}^2 
	&\leq 4 \alpha^2 \theta + \frac{4\alpha\sigma\Gamma}{n} + \frac{4\alpha\sigma\pi}{n} \sqrt{\frac{\log(2\zeta^{-1})}{2} \sup_{ \vert\vert\vert M \vert\vert\vert \leq 1} \sum_{k=1}^n \mathrm{tr}(A_{k}^\top M)^2 } \\
	&\leq 4 \alpha^2 \theta + \frac{4\alpha\sigma\alpha\Gamma}{n} + \frac{4\alpha\sigma\alpha\pi}{n} \sqrt{\frac{n\log(2\zeta^{-1})}{2}
		\left(\theta + \sup_{  \vert\vert\vert M \vert\vert\vert\leq 1}\|M\|_\mathrm{F}^2\right)} \\
	&\leq 4 \alpha^2 \theta + \frac{4\alpha\sigma\Gamma}{n} + 2\pi\alpha\sigma \sqrt{\frac{2\log(2\zeta^{-1})\left(\theta + R^2\right)}{n}}.
\end{align*}

\section{Proof of Lemma~\ref{lemma:pi2a_op}}
\label{sec:proof:lemma:pi2a_op}
Let $E = \mathrm{ker}(T^*)^\perp$. Then it follows that $T^* = T^* P_E$, where $P_E$ denotes the orthogonal projection onto $E$. Furthermore, since $\mathrm{rank}(T^*) \leq r$, we have $\mathrm{dim}(E) \leq r$. 
By \cite[Proposition~3.1]{jameson1987summing}, the 2-summing norm of $T^*$ is upper-bounded by
\[
\pi_2(T^*) \leq \|T^*\| \pi_2(P_E).
\]
Since the $\ell_2$ norm is unitarily-invariant, there exists a diagonal operator $\Lambda$ such that $\pi_2(P_E) = \pi_2(\Lambda)$. 
Then, since $\mathrm{dim}(E) \leq r$, it follows by \cite[Proposition~3.5]{jameson1987summing} that
\[
\pi_2(P_E) \leq \sqrt{r}.
\]
Finally it trivially holds that $\|T^*\| = \|T\|$. 
Combining the results provides the upper bound. 
Next, the lower bound is obtained from the fact that the 2-summing norm is always lower-bounded by the operator norm (see \cite[Proposition~3.1]{jameson1987summing}). 
This completes the proof.

\section{Proof of Lemma~\ref{lemma:p2ANDp}}
\label{sec:proof:lemma:p2ANDp}
We use the following result by Pietsch to prove Lemma~\ref{lemma:p2ANDp}.

\begin{lemma}[{\cite[Theorem~5.8]{jameson1987summing}}]
\label{lemma:pietsch}
Let $X,Y$ be normed linear spaces ($Y$ complete) and $T \in L(X,Y)$ be a 2-summing operator. Then there exists a Hilbert space $H$ and operators $T_1 \in L(X,H)$ and $T_2 \in L(H,Y)$ such that $T = T_2 T_1$, $\pi_2(T_1) = \pi_2(T)$, and $\nu_1(T_2) = 1$.
\end{lemma}

By Lemma~\ref{lemma:pietsch}, there exist $d \in \mathbb{N}$, $T_1^* \in L(Y^*,\ell_2^d)$, and $T_2^* \in L(\ell_2^d,X)$ such that $T^* = T_2^* T_1^*$, $\pi_2(T_1^*) = \pi_2(T^*)$ and $\|T_2^*\| = 1$. Furthermore, by \cite[Proposition~3.1]{jameson1987summing}, we have
\[
\pi_2(T_2^* T_1^*) \leq \pi_2(T_1^*) \|T_2^*\|
\]
for any $T_1^* \in L(Y^*,\ell_2^d)$ and $T_2^* \in L(\ell_2^d,X)$. Therefore, the 2-summing norm of $T^*$ is written as in \eqref{eq:pi2Tstar1}.

\section{Proof of Lemma~\ref{lemma:pi2a_nu1}}
\label{sec:proof:lemma:pi2a_nu1}
We first compute the nuclear-$1$ norm of the adjoint $T^* \in Y \otimes \ell_\infty^n$. 
By the trace duality, the projective tensor norm of $T^*$ is written as
\begin{equation}
\label{eq:pi_dual}
\nu_1(T^*) = \sup \{ \mathrm{tr}(S^*T^*) : S^* \in \ell_1^n \otimes Y^*, \, \|S^*\| \leq 1 \}.
\end{equation}
Recall that the 2-summing norm $\pi_2$ is self-dual with respect to the trace duality. Therefore, we have
\begin{equation}
\label{eq:pi2_dual}
\mathrm{tr}(S^*T^*) \leq \pi_2(S^*) \pi_2(T^*).
\end{equation}
Since $S^* \in \ell_1^n \otimes Y^*$ and $Y$ is of type-2, by \cite[Propositions~9.3 and 9.8]{jameson1987summing}, it follows that
\begin{equation}
\label{eq:pi2_type2}
\pi_2(S^*) \leq \sqrt{2} \, \tau_2(Y) \|S^*\|.
\end{equation}
By plugging in \eqref{eq:pi2_dual} and \eqref{eq:pi2_type2} into \eqref{eq:pi_dual}, we obtain
\[
\nu_1(T^*) \leq \sqrt{2} \, \tau_2(Y) \pi_2(T^*).
\]
Moreover, since all Banach spaces here are finite-dimensional, it follows from \cite[Proposition~1.13]{jameson1987summing} that
\[
\nu_1(T) = \nu_1(T^*).
\]
The lower bound is obtained from the fact that the nuclear-1 norm is the largest operator ideal norm and the 2-summing norm is an operator ideal norm. 
This completes the proof.

\section{Solving a norm-constrained least squares}
\label{sec:app_least_squares_l2_constraint}
We consider a norm-constrained least squares problem in the form of
\begin{equation}
\label{eq:ncls}
\mathop{\mathrm{minimize}}_{\|x\|_2 \leq \alpha } \ \| y - Ax \|_2^2,
\end{equation}
where $A \in R^{m \times n}$, $y \in R^m$, and $x \in R^n$. 
Then the Lagrangian is given by
\[ 
L(x, \lambda) =  \|y - Ax \|_2^2  + \lambda (\|x\|_2^2 - \alpha^2 ). 
\]
Since \eqref{eq:ncls} satisfies Slater's condition, an optimal solution is characterized by the Karush–Kuhn–Tucker (KKT) conditions. 
The KKT conditions for \eqref{eq:ncls} are given by 
\begin{align*}
    \|x\|_2^2 & \leq \alpha^2, \\
    \lambda & \geq 0, \\
    \lambda (\| x\|_2^2 - \alpha^2 ) & = 0, \\
    ( A^\top A x - A^\top y ) + \lambda x
 & = 0. 
\end{align*}
If the unconstrained least squares solution $x_0 = (A^\top A)^{-1} A^\top y$ is feasible by satisfying the norm constraint $\| x_0 \|_2 \leq \alpha$, then $x_{\mathrm{LS}}$ and $\lambda=0$ satisfy the KKT conditions. 
Therefore, $x_0$ is an optimal solution to \eqref{eq:ncls}. 
Otherwise, we need to find the optimal Lagrange multiplier $\lambda^\star$ that satisfies the KKT conditions. 
Note that $x_\lambda := (A^\top A + \lambda I_n)^{-1} A^\top y$ satisfies the last condition for all $\lambda \geq 0$. Further, the norm of this solution $\|x_\lambda\|_2^2 = \|(A^\top A + \lambda I_n)^{-1} A^\top y\|_2^2$ is a decreasing function of $\lambda$. 
By using a binary search, we can arrive at $\lambda$ that can approximately satisfy all the constraints for any desired level of accuracy. 
At each step in a binary search, $x_\lambda$ needs to be computed. However, this can be done efficiently by pre-computing the SVD of the matrix $A$.

\section{Non-local case analysis}
\label{sec:nonlocal}

Since $A_1,\dots,A_n$ be independent copies of a random matrix with i.i.d. entries following $\mathcal{N}(0,1)$, it follows that 
\begin{equation}
\label{eq:iso_iidg}
\E \sum_{k=1}^n \mathrm{tr}(A_k^\top M)^2 = n \norm{M}_\mathrm{F}^2, \quad \forall M.
\end{equation}
The left-hand side of \eqref{eq:iso_iidg} without the expectation is written as a Gaussian quadratic form given by
\begin{equation}
\label{eq:nonlocal_identity1}
\sum_{k=1}^n \mathrm{tr}(A_k^\top M)^2 = 
\norm{Q_M \xi}_2^2,
\end{equation}
where
\begin{equation*}
Q_{M} := \left( I_n \otimes \mathrm{vec}(M)^\top \right)
\end{equation*} 
and $\xi := [\mathrm{vec}(A_1)\; \dots \; \mathrm{vec}(A_n)] \in \mathbb{R}^{nd_1d_2} \sim \mathcal{N}(0,I_{nd_1d_2})$. 
The identity in \eqref{eq:nonlocal_identity1} holds in the sense of distribution.

Recall that we are interested in reconstructing a low-rank matrix $M$ from the linear measurements in the form of
\[
y_k = \mathrm{tr}(A_k^\top M) + \eta_k, \quad k \in [n], 
\]
where $\eta_1, \dots, \eta_n$ are i.i.d. following $\mathcal{N}(0,\sigma^2)$, which are also independent from everything else. 
To invoke Proposition~\ref{prop:deterministic_errbnd}, we need to derive probabilistic upper bounds in \eqref{eq:qub} and \eqref{eq:gaussian_complexity}. 

We first show that 
\begin{equation}
\label{eq:qub_nonlocal2}
\begin{aligned}
\sup_{\vert\vert\vert M\vert\vert\vert \leq 1} \left| \frac{1}{n} \sum_{k=1}^n \mathrm{tr}(A_k^\top M)^2 - \| M \|_\mathrm{F}^2 \right| 
& \lesssim 
\sqrt{\frac{r(d_1+d_2)\log^6(d_1 d_2)}{n}} \vee \frac{r(d_1+d_2)\log^6(d_1 d_2)}{n} \\
& \quad + \frac{1}{\sqrt{n}}
\left ( \sqrt{\frac{r(d_1+d_2)\log^6(d_1 d_2)}{n}} + 1\right ) \sqrt{\log(2\zeta^{-1})} + \frac{\log(2\zeta^{-1})}{n}
\end{aligned}
\end{equation}
holds with probability $1-\zeta$. 
By \eqref{eq:nonlocal_identity1}, the left-hand side of \eqref{eq:qub_nonlocal2} satisfies
\begin{equation}
\label{eq:qub_nonlocal3}
\sup_{\vert\vert\vert M\vert\vert\vert \leq 1} \left| \frac{1}{n} \sum_{k=1}^n \mathrm{tr}(A_k^\top M)^2 - \| M \|_\mathrm{F}^2 \right| 
=
\sup_{\vert\vert\vert M\vert\vert\vert \leq 1} \left|\norm{Q_M \xi}_2^2 - n \| M \|_\mathrm{F}^2 \right|.
\end{equation}
We apply Theorem~\ref{thm:kmr} to get a tail bound on the right-hand side of \eqref{eq:qub_nonlocal3}. Let $\Delta = \{Q_M : M \in B_{\vert\vert\vert\cdot\vert\vert\vert}\}$.
The radii of $\Delta$ with respect to the Frobenius and spectral norms are given by
\[
d_\mathrm{F}(\Delta) = \sup_{M \in B_{\vert\vert\vert\cdot\vert\vert\vert}} \norm{Q_M}_\mathrm{F} = \sup_{M \in B_{\vert\vert\vert\cdot\vert\vert\vert}} \sqrt{n} \norm{M}_\mathrm{F} \leq \sqrt{n}
\]
and
\[
d_\mathrm{S}(\Delta) = \sup_{M \in B_{\vert\vert\vert\cdot\vert\vert\vert}} \norm{Q_M} = \sup_{M \in B_{\vert\vert\vert\cdot\vert\vert\vert}} \norm{M}_\mathrm{F} \leq 1.
\]
Furthermore, since $\norm{Q_M - Q_{M'}} = \norm{M - M'}_\mathrm{F}$, the $\gamma_2$-functional of $\Delta$ is upper bounded by Dudley's inequality as
\begin{align}
    \tilde{\gamma}_2 (\Delta, \norm{\cdot}) 
    & \lesssim \int_0^{\infty} \sqrt{\log N \left(\Delta, \eta B_\mathrm{S} \right)} \, d\eta \nonumber \\
    & \leq \int_0^{\infty} \sqrt{\log N\left ( B_{\vert\vert\vert\cdot\vert\vert\vert}, \eta B_{S_2^{d_1,d_2}} \right )} \, d\eta \nonumber \\
    & \leq \int_0^{\infty} \sqrt{\log N\left ( \sqrt{r} B_{S_1^{d_1,d_2}}, \eta B_{S_2^{d_1,d_2}} \right )} \, d\eta \nonumber \\  
    & \lesssim \sqrt{r} \mathcal{E}_{2,1}(\mathrm{id}: S_1^{d_1,d_2} \rightarrow S_2^{d_1,d_2}), \label{eq:iidG_gamma2}
\end{align}
where $S_q^{d_1,d_2}$ denotes the Schatten-$q$ class of $d_1$-by-$d_2$ matrices. 
The upper bound in \eqref{eq:iidG_gamma2} is further bounded from above by using Maurey's empirical method as follows. 
Note that the unit ball in $S_1^{d_1,d_2}$ is contained in the unit ball in $S_q^{d_1,d_2}$ for any $q \geq 1$. 
Then we apply \cite[Equation (3)]{junge2020generalized} and \cite[Theorem 3.15]{junge2020generalized} to an upper bound by substituting $S_1^{d_1,d_2}$ by $S_q^{d_1,d_2}$, which yields
\[
\mathcal{E}_{2,1}(\mathrm{id}: S_q^{d_1,d_2} \rightarrow S_2^{d_1,d_2}) \lesssim (1+ \ln d_1 + \ln d_2)^{3/2} (q')^{3/2} \alpha_{d_1 d_2}(\mathrm{id}),
\]
where $q'$ satisfies $1/q + 1/q' = 1$ and 
\[
\alpha_{d_1 d_2}(\mathrm{id}) 
= \left\| \sqrt{d_1} I_{d_2} \right\|_{S_{q'}^{d_2}} + \left\| \sqrt{d_2} I_{d_1} \right\|_{S_{q'}^{d_1}}
= d_1^{1/2} d_2^{1/q'} + d_1^{1/q'} d_2^{1/2}.
\]
By choosing $q' = \log d_1 + \log d_2$, we obtain 
\begin{equation}
\label{eq:iidG_gamma2_E21}
\mathcal{E}_{2,1}(\mathrm{id}: S_1^{d_1,d_2} \rightarrow S_2^{d_1,d_2}) \lesssim \sqrt{d_1+d_2} \log^3(d_1 d_2).
\end{equation}
Then plugging in \eqref{eq:iidG_gamma2_E21} to \eqref{eq:iidG_gamma2} provides
\[
\tilde{\gamma}_2 (\Delta, \norm{\cdot}) 
\lesssim \sqrt{r(d_1+d_2)\log^6(d_1 d_2)}.
\]
Then $E$, $U$, and $V$ in Theorem~\ref{thm:kmr} are upper-bounded respectively by
\begin{align*}
E & \lesssim 
n \left( \sqrt{\frac{r(d_1+d_2)\log^6(d_1 d_2)}{n}} \vee \frac{r(d_1+d_2)\log^6(d_1 d_2)}{n} \right), 
\quad U \leq 1,
\intertext{and}
V & \lesssim \sqrt{n}
\left ( \sqrt{\frac{r(d_1+d_2)\log^6(d_1 d_2)}{n}} \vee 1\right ).
\end{align*}
Therefore, plugging in these upper estimates to Theorem~\ref{thm:kmr} provides \eqref{eq:qub_nonlocal2}. 

Next, we show that 
\begin{equation}
\label{eq:nonlocal_gaussian_complexity}
\E_{(g_k)} \left\vert\xspace\left\vert\xspace\left\vert\sum_{k=1}^n g_{k} A_{k} \right\vert\xspace\right\vert\xspace\right\vert_{*} 
\lesssim \sqrt{nr(d_1+d_2) \log(\zeta^{-1}) \log^6(d_1 d_2)}
\end{equation} 
holds with probability $1-\zeta$, where the probability is with respect to the random matrices $A_1,\dots,A_n$. 
By \citep[Equation (4.9)]{ledoux2013probability}, the left-hand side of \eqref{eq:nonlocal_gaussian_complexity} is upper-bounded by the corresponding Rademacher complexity multiplied by a logarithmic factor, i.e. 
\begin{equation*}
\mathbb{E}_{(g_k)} \tnorm{\sum_{k=1}^n g_k A_k}_*
\lesssim \sqrt{\log n} \cdot
\mathbb{E}_{(r_k)} \tnorm{\sum_{k=1}^n r_k A_k}_*
\end{equation*}
for a Rademacher sequence $(r_k)_{k=1}^n$. Then, due to the symmetry of the standard Gaussian distribution, we obtain
\begin{equation*}
\begin{aligned}
\E_{(r_k)} \tnorm{\sum_{k=1}^n r_k A_k}_{*} 
= \sup_{\vert\vert\vert M \vert\vert\vert \leq 1} \left| \sum_{k=1}^n \langle r_k A_k, M \rangle \right| 
= \underbrace{ \sup_{\vert\vert\vert M \vert\vert\vert \leq 1}  \sum_{k=1}^n \langle A_k, M \rangle }_{\text{($\S$)}}.
\end{aligned},
\end{equation*}
where the last identity used the fact that the set $\{M: \tnorm{M} \leq \alpha\}$ is symmetric.
Below we derive a tail bound on ($\S$) by using Lemma~\ref{lemma:gaussianmax}.
Let $f_M := \mathbf{1}_{n,1} \otimes \mathrm{vec}(M) \in \mathbb{R}^{n d_1 d_2}$, where $\mathbf{1}_{n,1}$ denotes the column vector of length $n$ with all entries set to $1$.
Then ($\S$) is written as the maximum of $f^* \xi$ with $\xi \sim \mathcal{N}(0,I_{nd_1d_2})$ over the set $\Delta = \{ f_M : \tnorm{M} \leq 1 \}$.
Since 
\begin{align*}
\norm{f_M - f_{M'}}_2 
= \sqrt{n} \norm{M - M'}_\mathrm{F}, 
\end{align*}
it follows that
\[
N(\Delta, \eta B_2) 
\leq N\left( B_{\vert\vert\vert\cdot\vert\vert\vert}, \frac{\eta}{\sqrt{n}} B_{S_2^{d_1,d_2}} \right)
\leq N\left( \sqrt{r} B_{S_1^{d_1,d_2}}, \frac{\eta}{\sqrt{n}} B_{S_2^{d_1,d_2}} \right).
\]
Therefore, we have
\begin{align*}
\int_0^{\infty} \sqrt{\log N\left ( \Delta, \frac{\eta}{\sqrt{n}} B_2 \right )} \, d\eta 
& \leq 
\int_0^{\infty} \sqrt{\log N\left ( \sqrt{r} B_{S_1^{d_1,d_2}}, \eta B_{S_2^{d_1,d_2}} \right )} \, d\eta \\
& \lesssim \sqrt{nr} \mathcal{E}_{2,1}(\mathrm{id}: S_1^{d_1,d_2} \rightarrow S_2^{d_1,d_2}) \\
& \lesssim \sqrt{nr(d_1+d_2)} \log^3(d_1 d_2).
\end{align*}
Plugging in this upper bound to Lemma~\ref{lemma:gaussianmax} provides \eqref{eq:nonlocal_gaussian_complexity}. 
Finally, \eqref{eq:samp_comp_nonlocal} is obtained by invoking Proposition~\ref{prop:deterministic_errbnd} with \eqref{eq:qub_nonlocal2} and \eqref{eq:nonlocal_gaussian_complexity}.


%

\end{document}